\newcommand{\eps}{\varepsilon}
\newcommand{\eqcolon}{\mathrel{\mathord{=}\raise.2\p@\hbox{:}}}
\newcommand{\coloneq}{\mathrel{\raise.2\p@\hbox{:}\mathord{=}}}
\newcommand{\dd}{\mathrm{d}}
\newcommand{\RR}{\mathbb{R}}
\newcommand{\LL}{\mathcal{L}}
\newcommand{\CC}{\mathcal{C}}
\newcommand{\ZZ}{\mathbb{Z}}
\renewcommand{\LL}{\mathcal{L}}
\newcommand{\cF}{\mathcal{F}}
\newcommand{\cN}{\mathcal{N}}
\newcommand{\cC}{\mathcal{C}}
\newcommand{\cL}{\mathcal{L}}
\newcommand{\II}{\mathbb{I}}
\newcommand{\TT}{\mathbb{T}}
\newtheorem{theorem}{Theorem}[section]
\newtheorem{definition}[theorem]{Definition}
\newtheorem{hypothesis}[theorem]{Hypothesis}
\newtheorem{lemma}[theorem]{Lemma}
\newtheorem{proposition}[theorem]{Proposition}
\newtheorem{remark}[theorem]{Remark}
\begin{document}
\title{Nonlinear PDEs with modulated dispersion I: \\ Nonlinear Schr\"odinger equations}
\author{K.~Chouk, M.~Gubinelli\footnote{Member of the Institut Universitaire de France.}\\
{\small
CEREMADE  UMR 7534}\\{\small  Universit\'e Paris--Dauphine \& CNRS, France
}\\{\small \texttt{\{chouk,gubinelli\}@ceremade.dauphine.fr}
}}
\date{\today}
\maketitle

\begin{abstract}
We start a study of various nonlinear PDEs under the effect of a  modulation in time of the dispersive term. In particular in this paper we consider the modulated non-linear Schr\"odinger equation (NLS) in dimension 1 and 2 and the derivative NLS in dimension 1. We introduce a deterministic notion of ``irregularity" for the modulation and obtain local and global results similar to those valid without modulation. In some situations, we show how the irregularity of the modulation improves the well--posedness theory of the equations.    
We develop two different approaches to the analysis of the effects of the modulation. A first approach is based on novel  estimates for the regularising effect of the modulated dispersion on the non-linear term using the theory of controlled paths. A second approach is an extension of a Strichartz estimated first obtained by Debussche and Tsutsumi in the case of the Brownian modulation for the quintic NLS. \\[.5cm]   
\textbf{Keywords:} Dispersion management; Young integrals; Controlled paths; Stochastic Strichartz inequality; Non-linear Schr\"odinger equation; Regularization by noise. 
\end{abstract}

\tableofcontents

\section{Introduction}

With this paper we start a study of nonlinear PDEs of the form
\begin{equation}
\label{eq:abs}
\frac\dd{\dd t} \varphi_t = A \varphi_t \frac{\dd w_t}{\dd t}  + \cN(\varphi_t),\qquad t\ge 0
\end{equation}
where $w:\RR_+\to\RR$ is an arbitrary continuous function, $A$ is an unbounded linear operator and $\cN$ some general non-linearity. We will be concerned by the case where  $A$ is a dispersive  operator like the Schr\"odinger operator $i\partial^2$ or the Airy operator $\partial^3$ acting on periodic or non-periodic functions on $\RR^n$ and where $\cN$ is some polynomial non-linearity possibly with derivative terms. In this paper we will limit our analysis to the following cases:
\begin{enumerate}
\item (NLS) Non-linear cubic Schr\"odinger equation in $\TT^n,\RR^n$, $n=1,2$,  $A=i\partial^2$, $\cN(\phi)=i|\phi|^2\phi$; 
\item The general NLS with polynomial non-linearity in $\RR$,  $A=i\partial^2$, $\cN(\phi)=i|\phi|^\mu\phi$, $\mu\in(1,4]$; 
\item (dNLS) Non-linear (Wick-ordered) derivative cubic Schr\"odinger equation in $\TT$,   $A=i\partial^2$, $\cN(\phi)=i \partial (|\phi|^2-\|\phi\|_{H^0}^2)\phi$. 
\end{enumerate}
In all these cases the Banach space $V$ will belong to the scale of Sobolev spaces $H^\alpha$, $\alpha \in \RR$ defined as the completion of smooth functions with respect to the norm
\begin{equation}
\label{eq:H-norm}
\|\phi\|_{\alpha}= \|\phi\|_{ H^\alpha} = \|\langle \xi \rangle^\alpha \hat \phi(\xi)\|_{L^2(\RR^n)}
\end{equation}
where $\hat \phi$ is the Fourier transform of $\phi:\RR^n\to \CC$ and $\langle \xi\rangle=(1+|\xi|^2)^{1/2}$. Similar definition holds in the periodic case where $\RR^n$ is replaced by $\TT^n$ with $\TT=[0,2\pi[$ with periodic boundary conditions. 

\medskip

The (randomly) modulated NLS equation has been subject of interest in recent literature (for example~\cites{kunze_ground_2005,zharnitsky_stabilizing_2001,abdullaev_soliton_2000,marty2006,hundertmark_decay_2009, deBouard20101300,Debussche2011363, hundertmark_super-exponential_2012}), especially related to  applications to soliton management in optical wave-guides. We were directly inspired by the recent work of De~Bouard and Debussche~\cite{deBouard20101300} who study the Nonlinear Schr\"odinger equation with Brownian modulation. They show that it describes the homogenisation of the deterministic Nonlinear Schr\"odinger Equation with time dependent and ergodic dispersion. Our work can be seen as a generalisation of theirs and of the subsequent results of Debussche and Tsutsumi~\cite{Debussche2011363} to a general class of irregular modulations (which however are not required to be random). 
Aside of specific applications we have two main theoretical motivations:
\begin{itemize}
\item[i)] Understanding the properties of dispersive PDEs in non-homogeneous environments and on what can be expected as far as ``generic" properties of the equation are concerned. Modulated equations seems to rule out classical techniques of Fourier analysis (e.g. Bourgain spaces in the case of KdV) and other important tools like Strichartz estimates require different proofs than those standard in the literature. Many conservation laws are also not available in the modulated context and this affects the analysis of global solutions.   

\item[ii)] The study of the \emph{regularisation effect} of a non-homogene\-ous time modulation in the spirit of the recent work of Flandoli, Priola and one of the authors~\cite{FGP} on the stochastic transport equation 
$$
\frac{\partial}{\partial t} u(t,x) + b(t,x) \cdot \nabla u(t,x) + \nabla u(t,x) \circ \frac{\dd B_t}{\dd t} =0
$$
where the modulation is provided by a Brownian motion $B$. In this context was explicitly shown that the addition of the random transport term improves the well-posedness theory of the equation and provide uniqueness in cases where the deterministic equation has multiple solutions, for example when the vector field $b$ it is only H\"older continuous in space. For a perspective on regularisation by noise in transport equations the reader can refer to~\cites{attanasio_renormalized_2011, flandoli_interaction_2011, fedrizzi_noise_2013, flandoli_remarks_2013, flandoli_random_2011, flandoli_full_2011, DeFlVi, FlMaNe}.

\end{itemize}

\medskip

Dealing with general modulations $w$  in Eq.~\eqref{eq:abs} poses a problem since it is non-trivial to give a  meaning to the derivative of $w$. If $w$ is sampled according to the Wiener measure then the differential equation can be understood via It\^o's stochastic calculus. Actually, De~Bouard and Debussche~\cite{deBouard20101300} observe that interpreting the stochastic differential in Stratonovich sense is the most natural choice in this context since it preserves the mild formulation of the equation (see below) and allows to describe the scaling limit of smooth ergodic modulations.  In the more general situation  the interpretation of eq.~\eqref{eq:abs} as an It\^o or Stratonovich stochastic partial differential equation (SPDE) is not possible. Therefore we prefer to describe solutions directly via a mild formulation putting aside the problem of giving a proper weak--formulation of the equation. If we denote by  $(e^{tA})_{t\in\RR}$ the group of isometries of $V=H^\alpha$ generated by $A$, the mild solution $\varphi$ of eq.~\eqref{eq:abs} is an element of $C(\RR_+;V)$ satisfying formally the equation
\begin{equation}
\label{eq:mild}
\varphi_t = U^w_t \varphi_0 +  U^w_{t} \int_0^t  (U^w_{s})^{-1}\cN( \varphi_s )\dd s, \qquad t\ge 0,
\end{equation}
where $U^w_t = e^{A w_t }$ is the operator obtained by a time-change of the linear evolution associated to $A$ using the function $w$. In this form the equation makes sense for arbitrary continuous function $w$. 

\medskip

The aim of this paper is to analyse eq.~\eqref{eq:mild} under some hypothesis on the ``irregularity" of the perturbation $w$. In particular if $w$ is sufficiently irregular (in a precise sense to be specified below) then we will be able to show that the above nonlinear PDE  can be solved in spaces which are comparable to those allowed by the classical equation
\begin{equation}
\label{eq:abs-deter}
\frac\dd{\dd t} \varphi_t = A \varphi_t  + \cN(\varphi_t),\qquad t\ge 0
\end{equation}
 and that in some situations the combination of the irregularity of the perturbation and the non-linear interaction provides a  regularising effect on the equation. 
 
 \medskip
 
Let us now be more specific about the kind of solutions we are looking for. The nonlinear term $\cN( \varphi_s )$ in eq.~\eqref{eq:mild} does not belong in general to $V$ and more seriously cannot be sensibly defined for arbitrary element of $V$ if $\alpha<0$. To give it a meaning we proceed by approximation. Let $\Pi_N : H^\alpha \to H^\alpha$ be the projector on Fourier modes $|\xi|\le N$: $\widehat{\Pi_N f}(\xi)= \mathbb{I}_{|\xi|\le N} \hat f(\xi)$ where $\hat f$ denotes the Fourier transform of $f\in H^\alpha$ and let $\mathcal{N}_N(\phi)= \Pi_N \mathcal{N}(\Pi_N \phi)$ be the Galerkin regularisation of the non-linearity. 

\begin{definition}
\label{def:solution}
Let $T\in(0,+\infty]$ be a time horizon. Given a function $\varphi\in C([0,T);V)$ we write
\begin{equation}
\label{eq:def-int}
\int_0^t  (U^w_{s})^{-1}  \cN( \varphi_s )\dd s  = \lim_{N\to \infty} \int_0^t  (U^w_{s})^{-1}  \cN_N( \varphi_s )\dd s  .
\end{equation}
whenever the limit exists in $C([0,T);V)$. 

For any $T<+\infty$ a function $\varphi\in C([0,T);V)$  is a (local) solution to~\eqref{eq:mild} in $V$ with initial condition $\phi\in V$ if the above limit exists and eq.~\eqref{eq:mild} is satisfied in $C([0,T);V)$ with $\varphi_0= \phi$.
The solution is global if $T$ can be taken $+\infty$.
\end{definition}

It should be noted that the quantity in eq.~\eqref{eq:def-int} is not a usual Bochner integral but only a convenient notation for the limit procedure. Indeed $\cN( \varphi_s )$ will exist only as a space-time distribution and not as a continuous function with values in $V$. 
 
 \medskip

The next definition concerns the particular notion of ``irregularity" of the perturbation that will be relevant in our analysis and which has been introduced by Catellier and Gubinelli in~\cite{CatellierGubinelli}.

\begin{definition}
\label{def:irregularity}
Let $\rho>0$ and $\gamma>0$. We say that a function $w\in C([0,T];\RR)$ is $(\rho,\gamma)$-irregular if :
$$
\|\Phi^w\|_{\mathcal{W}^{\rho,\gamma}_T} = \sup_{a\in \RR} \sup_{0\le s < t\le T} (1+ |a|)^\rho \frac{\left|\Phi^w_{t}(a)-\Phi^w_{s}(a)\right|}{|s-t|^\gamma} < +\infty
$$
where $\Phi^w_{t}(a)=\int_0^t e^{ia w_r}\dd r$. Moreover we say that $w$ is $\rho$-irregular if there exists $\gamma>1/2$ such that $w$ is $(\rho,\gamma)$-irregular.
\end{definition}
As it is apparent from this definition the notion of irregularity that we need is related to the \emph{occupation measure} of the function $w$~(see for example the review of Geman and Horowitz on occupation densities for deterministic and random processes~\cite{geman_occupation_1980}), in particular to the decay of its Fourier transform at large wave-vectors as measured by the exponent $\rho$. The time regularity of this Fourier transform, measured by the H\"older exponent $\gamma$, will also play an important r\^ole. 

Existence of (plenty of) perturbations $w$ which are $\rho$-irregular is guaranteed by the following theorem, proven in~\cite{CatellierGubinelli} and which has been inspired by some computations in a previous paper of Davie~\cite{Davie}.
\begin{theorem}
Let $(W_t)_{t\ge 0}$ be a fractional Brownian motion of Hurst index $H\in(0,1)$ then for any $\rho < 1/2H$  there exist $\gamma > 1/2$ so that with probability one the sample paths of $W$ are $(\rho,\gamma)$-irregular.
\end{theorem}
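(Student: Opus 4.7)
The strategy follows~\cite{CatellierGubinelli}: derive a Gaussian moment estimate for $\Phi^W_{s,t}(a)$ using the local non-determinism of fractional Brownian motion, interpolate with the trivial bound $|\Phi^W_{s,t}(a)|\le t-s$, and close the argument with a multi-parameter Kolmogorov/Garsia--Rodemich--Rumsey (GRR) continuity inequality for the random field $(s,t,a)\mapsto\Phi^W_{s,t}(a)$.

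\textbf{Step 1 (base moment bound).} For any integer $p\ge 1$, the goal is to prove
\[
\expect\bigl|\Phi^W_{s,t}(a)\bigr|^{2p}\;\le\; C_p\,(t-s)^p\,(1+|a|)^{-p/H}.
\]
Expanding the modulus as a $2p$-fold integral and using the Gaussian character of $W$,
\[
\expect\bigl|\Phi^W_{s,t}(a)\bigr|^{2p}\;=\;\int_{[s,t]^{2p}}\exp\!\Bigl(-\tfrac{a^2}{2}\,\var\!\Bigl(\textstyle\sum_{j=1}^p(W_{r_j}-W_{r'_j})\Bigr)\Bigr)\dd r_1\cdots\dd r'_p.
\]
Splitting the domain of integration into the $(2p)!$ ordered simplices and invoking Pitt's local non-determinism for fBm, one lower bounds the quadratic form by $c\sum_{i=1}^{2p-1}(r_{(i+1)}-r_{(i)})^{2H}$ (with coefficients depending on the sign pattern), then rescales each gap by $|a|^{1/H}$ to extract the factor $|a|^{-p/H}$; the remaining freedom in positioning the ordered block inside $[s,t]$ produces the factor $(t-s)^p$.

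\textbf{Step 2 (interpolation and increments).} Combining the base estimate with the trivial bound $|\Phi^W_{s,t}(a)|\le t-s$ and using Jensen's inequality gives, for every $\lambda\in[0,1]$,
\[
\expect\bigl|\Phi^W_{s,t}(a)\bigr|^{2p}\;\le\; C_p\,(t-s)^{p(2-\lambda)}\,(1+|a|)^{-p\lambda/H}.
\]
Analogous bounds for the $a$-increment $\Phi^W_{s,t}(a)-\Phi^W_{s,t}(b)$ follow from the elementary inequality $|e^{iau}-e^{ibu}|\lesssim|a-b|^\eta |u|^\eta$ with $\eta\in(0,1)$, together with the Gaussian moments of $W_r$. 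Bounds for $(s,t)$-increments follow directly from the base estimate applied to subintervals.

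\textbf{Step 3 (multi-parameter GRR).} These moment bounds feed into a joint GRR continuity lemma in the three variables $(s,t,a)$. Letting $p\to\infty$, the resulting Hölder-type exponents approach $(2-\lambda)/2$ in time and $\lambda/(2H)$ in $a$, yielding almost surely
\[
\sup_{a\in\RR,\,0\le s<t\le T}\frac{(1+|a|)^\rho\,|\Phi^W_{s,t}(a)|}{(t-s)^\gamma}<\infty
\]
whenever $\gamma<1-\lambda/2$ and $\rho<\lambda/(2H)$ for some $\lambda\in(0,1]$. Given $\rho<1/(2H)$, one picks $\lambda\in(2H\rho,1)$, which forces $\gamma>1/2$ and concludes.

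\textbf{Main obstacle.} The technical heart is Step 1: the local non-determinism lower bound on $\var(\sum\epsilon_j W_{r_j})$ must hold uniformly over the sign pattern and the ordering of the $2p$ times, and the $p$-dependent combinatorial constants have to be tracked carefully enough that the resulting $C_p$ does not destroy the scaling used in Step 3. A secondary but real subtlety is that the GRR argument in Step 3 must be uniform globally in $a\in\RR$; this is what forces one to need the full polynomial decay $|a|^{-p/H}$ in Step 1 rather than merely an integrable-in-$a$ bound.
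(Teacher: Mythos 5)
The paper does not actually prove this theorem: it is imported verbatim from Catellier--Gubinelli~\cite{CatellierGubinelli} (the text says ``proven in~\cite{CatellierGubinelli}''), so there is no in-paper argument to compare against. Your outline is, however, a faithful reconstruction of the strategy of that reference: Gaussian moment bounds for $\Phi^W_{s,t}(a)$ via local non-determinism, interpolation against the trivial bound $|\Phi^W_{s,t}(a)|\le t-s$, and a multi-parameter Garsia--Rodemich--Rumsey/Kolmogorov argument in $(s,t,a)$. The exponent bookkeeping in Steps 2--3 is correct and does recover exactly the stated trade-off: for $\rho<1/(2H)$ one takes $\lambda\in(2H\rho,1)$ and gets $\gamma=1-\lambda/2>1/2$.

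One point deserves sharpening, and it is the one you flag as the main obstacle. The local non-determinism bound cannot read $\var\bigl(\sum_j\epsilon_jW_{r_j}\bigr)\ge c\sum_{i=1}^{2p-1}(r_{(i+1)}-r_{(i)})^{2H}$ with all gaps contributing: writing $\sum_j\epsilon_jW_{r_j}=\sum_j u_j(W_{r_{(j)}}-W_{r_{(j-1)}})$ with $u_j$ the tail partial sums of the signs, the balanced sign pattern ($p$ pluses, $p$ minuses) forces some $u_j=0$ (in the alternating worst case exactly $p$ of them vanish), so only about $p$ of the gaps are pinned by the oscillation. This is not a defect but precisely the mechanism producing the claimed balance: $p$ pinned gaps each yield $|a|^{-1/H}$ after rescaling, and the $p$ unpinned degrees of freedom yield $(t-s)^p$. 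Your parenthetical ``coefficients depending on the sign pattern'' hides this; a complete write-up must make the worst-case count explicit and verify that the constants $C_p$ grow at most factorially so that the GRR step (which only needs a fixed large $p$ for given $\rho,\gamma$) goes through. With that caveat your proposal is a correct skeleton of the external proof, though as written it remains a sketch rather than a self-contained argument.
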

In particular there exists continuous paths which are $\rho$-irregular for arbitrarily large $\rho$. Using well known properties of support of the law of the fractional Brownian motion it is also possible to show that there exists $\rho$-irregular trajectories which are arbitrarily close in the supremum norm to any smooth path. It would be interesting to study more deeply the irregularity of continuous paths ``generically". 

\medskip

In our opinion an important general contribution of the present work is the observation that the regularity of the occupation measure of $w$ seems to play a major r\^ole in the understanding of the regularising properties of $w$ in a non-linear PDE context. This in turn prompts the need to  understand more deeply the link of the notion of $\rho$-irregularity with the path-wise properties of $w$. Indeed, apart from the classical contribution of Geman and Horowitz~\cite{geman_occupation_1980}, the authors are not aware of any systematic study of occupation measures from the point of view of their action on spaces of functions, topic which is central to our analysis. Let us explain this better: let 
$$
T^w_{ t} f(x) =   \int_0^t f(x+w_r) \dd r
$$ 
for measurable functions $f:\RR\to\RR$. Then $T^w_{ t} (e^{i a\cdot})(x) =  \Phi^w_{ t}(a) e^{i ax}$ which shows for example that if $w$ is $(\rho,\gamma)$-irregular then
$$
\|T^w_{t} f-T^w_{s} f\|_{H^\rho(\RR)} \lesssim |t-s|^\gamma \|f\|_{H^0(\RR)}
$$
meaning that $T_{t}^w$ is a regularising operator. This point of view links our research to the topic of improving bounds for averages along curves (see for example the paper of Tao and Wright~\cite{tao_lp_2003} and also that of Pramanik and Seeger~\cite{pramanik}) and to the  averaging lemmas for  kinetic formulation of stochastic conservation laws recently studied by Lions, Perthame and Souganidis~\cite{lions_stochastic_2012}. And of course to the work of Debussche and Tsutsumi~\cite{Debussche2011363} on Strichartz estimates for the Brownian modulation which will be generalised below in the context of irregular modulations. 

As we have already said, Catellier and Gubinelli  introduced the notion of $\rho$-irregularity in~\cite{CatellierGubinelli} and started a study of the behaviour of the averaging operator $T^w$ for random paths in the context of the regularisation by noise phenomenon for ODE with irregular drift. However much is still not very well understood. 
For example, it remains an open problem  to understand  what happens if we replace $w$ with a regularised version $w^\eps$. Alternatively we could imagine to add to $w$ some ``perturbation'' which does not change its local behaviour.  
In this respect we conjecture that if $w$ is $(\rho,\gamma)$-irregular then for any smooth function $\varphi$ the perturbed path $w^\varphi = w+\varphi$ is still $(\rho,\gamma)$-irregular but we are only able to prove this in the specific situation where $w$ is a fractional Brownian motion and $\varphi$ is a deterministic perturbation, or more generally but with a loss of $1/2$ in the $\rho$ irregularity of $w^\varphi$: both results (with precise statements) are obtained in~\cite{CatellierGubinelli}. In the case of a smooth $w$ we have the following straightforward result:
\begin{proposition}
Let $w:[0,T]\to\RR$ be a twice differentiable function such that $c_T=\inf_{t\in[0,T]}|w'_t|>0$ for any $T>0$ and $\frac{w''}{(w')^2}\in L^1_{loc}(0,+\infty)$ then $w$ is $(1-\gamma,\gamma)$ irregular for all $\gamma\in(0,1)$.
\end{proposition}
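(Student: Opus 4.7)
The plan is to estimate the oscillatory integral $\Phi^w_{s,t}(a)=\int_s^t e^{iaw_r}\,\dd r$ by combining two elementary bounds and then interpolate them against the weight $(1+|a|)^{1-\gamma}|t-s|^{-\gamma}$ appearing in Definition \ref{def:irregularity}.

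The first bound is the trivial one,
$$
|\Phi^w_{s,t}(a)|\le |t-s|,
$$
which is effective for small $|a|$. The second, effective for large $|a|$, comes from a single integration by parts using the fact that $w'$ is nonvanishing: writing $e^{iaw_r}=\frac{1}{iaw'_r}\frac{\dd}{\dd r}e^{iaw_r}$ and integrating by parts gives, for $a\neq 0$,
$$
\Phi^w_{s,t}(a)=\frac{1}{ia}\left[\frac{e^{iaw_t}}{w'_t}-\frac{e^{iaw_s}}{w'_s}\right]+\frac{1}{ia}\int_s^t e^{iaw_r}\frac{w''_r}{(w'_r)^2}\,\dd r .
$$
Using $|w'_r|\ge c_T$ and the $L^1_{loc}$ assumption on $w''/(w')^2$, this yields
$$
|\Phi^w_{s,t}(a)|\le \frac{C_T}{|a|},\qquad C_T\coloneq \frac{2}{c_T}+\int_0^T\frac{|w''_r|}{(w'_r)^2}\,\dd r .
$$

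Now I combine these via $\min(A,B)\le A^{1-\gamma}B^\gamma$, so that for any $a\neq 0$,
$$
|\Phi^w_{s,t}(a)|\le \min\!\left(|t-s|,\frac{C_T}{|a|}\right)\le \left(\frac{C_T}{|a|}\right)^{1-\gamma}|t-s|^\gamma .
$$
Splitting into the ranges $|a|\le 1$ (where I use the trivial bound, giving $(1+|a|)^{1-\gamma}|t-s|^{1-\gamma}\le 2^{1-\gamma}T^{1-\gamma}$), the case $a=0$ (handled identically), and $|a|>1$ (where the interpolated bound gives $(1+|a|)^{1-\gamma}(C_T/|a|)^{1-\gamma}\le 2^{1-\gamma}C_T^{1-\gamma}$), one obtains the required uniform estimate for every $\gamma\in(0,1)$.

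The only place where one has to be slightly careful is the justification of the integration by parts and of the bound on the remainder integral under the stated regularity: since $w$ is merely twice differentiable and $w''/(w')^2$ is only assumed locally integrable on $(0,+\infty)$, one should perform the integration by parts on $[s,t]\subset[\epsilon,T]$ and pass to the limit $\epsilon\downarrow 0$, using the fact that the boundary terms stay controlled by $2/c_T$ independently of $\epsilon$. This is routine and is the only subtlety; the rest of the argument is the standard stationary-phase-type interpolation.
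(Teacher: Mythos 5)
Your proof is correct and follows essentially the same route as the paper: one integration by parts using $|w'|\ge c_T$ and the $L^1$ bound on $w''/(w')^2$ to get $|\Phi^w_{s,t}(a)|\lesssim 1/|a|$, combined with the trivial bound $|\Phi^w_{s,t}(a)|\le|t-s|$ via the interpolation $\min(A,B)\le A^{\gamma}B^{1-\gamma}$ (the paper writes the identity with $e^{iaw_\sigma}-e^{iaw_s}$ so the boundary term at $s$ vanishes, and leaves the interpolation step implicit as ``follows immediately from the hypothesis''). The only point worth noting is that the remainder integral forces the hypothesis to be read as $w''/(w')^2\in L^1([0,T])$ for every $T$ — your $\epsilon\downarrow0$ limiting argument controls the boundary terms but not that integral — and this is a reading the paper's own proof implicitly adopts as well.
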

\begin{proof}
Integration by parts gives
$$
ia(\Phi^w_{t}(a)-\Phi^w_{s}(a))=\frac{e^{iaw_t}-e^{iaw_{s}}}{w'_t}+\int_{s}^{t}(e^{iaw_\sigma}-e^{iaw_s})\frac{w_\sigma''}{(w_\sigma')^2}\dd\sigma
$$
and the result follow immediately from the hypothesis.
\end{proof}
A simple but still remarkable fact about  $(\rho,\gamma)$-irregular functions is given by the following theorem which is an original contribution of the present work.
\begin{theorem}
Let $w$ be a $\delta$-H\"older function on $[0,T]$ then for every $\gamma,\rho>0$ such that $\gamma+\delta>1$ and $\rho>{(1-\gamma)}/{\delta}$ we have that $||\Phi^w||_{\mathcal W_T^{\gamma,\rho}}=+\infty$
\end{theorem}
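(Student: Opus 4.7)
The plan is to argue by contradiction: assume that $w$ is $(\rho,\gamma)$-irregular on $[0,T]$ with $\|\Phi^w\|_{\mathcal{W}^{\rho,\gamma}_T}=M<+\infty$ and derive an inconsistency in the limit $t-s \to 0$. The intuition is that $\delta$-H\"older regularity forces $e^{iaw_r}$ to be nearly constant on a short interval, so that $\Phi^w_{s,t}(a)$ stays comparable to $t-s$; this is incompatible with a decay of order $(t-s)^\gamma/(1+|a|)^\rho$ once $|a|$ is allowed to grow like $(t-s)^{-\delta}$.

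First I would freeze the oscillation. Fix $0\le s<t\le T$, set $\tau=t-s$, and let $L$ denote the $\delta$-H\"older constant of $w$. The elementary bounds $|e^{ix}-e^{iy}|\leq |x-y|$ and $|w_r-w_s|\leq L(r-s)^\delta$ yield
\[
\Bigl| \Phi^w_{s,t}(a) - \tau\, e^{iaw_s} \Bigr| \leq \int_s^t |a|\,|w_r-w_s|\,\dd r \leq \frac{L|a|}{1+\delta}\,\tau^{1+\delta},
\]
and hence by the reverse triangle inequality
\[
|\Phi^w_{s,t}(a)| \geq \tau - \frac{L|a|}{1+\delta}\,\tau^{1+\delta}.
\]

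Next I would choose $|a|$ to be the largest value for which the subtracted term is at most $\tau/2$, that is $|a| = \frac{1+\delta}{2L}\,\tau^{-\delta}$. With this choice the lower bound $|\Phi^w_{s,t}(a)|\geq \tau/2$ holds, while the $(\rho,\gamma)$-irregularity hypothesis gives $|\Phi^w_{s,t}(a)| \leq M(1+|a|)^{-\rho}\tau^\gamma$. For $\tau$ small enough $|a|\geq 1$, so $(1+|a|)^\rho \geq 2^{-\rho}|a|^\rho \geq c\,\tau^{-\rho\delta}$ for some constant $c=c(\rho,\delta,L)>0$. Combining the two bounds produces
\[
\frac{\tau}{2} \le M c^{-1}\,\tau^{\gamma+\rho\delta},\qquad\text{equivalently}\qquad \tau^{\,1-\gamma-\rho\delta} \leq 2M c^{-1}.
\]
Under the assumption $\rho > (1-\gamma)/\delta$ the exponent $1-\gamma-\rho\delta$ is strictly negative, so the left-hand side diverges as $\tau\to 0^+$, contradicting the finiteness of $M$.

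I do not expect any serious difficulty in carrying this out: the only delicate point is the verification that for $\tau$ small the chosen $|a|$ satisfies $|a|\geq 1$, which is automatic since $\tau^{-\delta}\to\infty$. The hypothesis $\gamma+\delta>1$ does not enter the core estimate; it merely ensures that the threshold $(1-\gamma)/\delta$ lies below $1$, so that the obstruction on $\rho$ is meaningful in the range naturally produced by the fractional Brownian existence theorem quoted above.
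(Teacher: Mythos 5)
Your proof is correct, and it takes a genuinely different and more elementary route than the paper's. The paper argues through the nonlinear Young integral: it writes $e^{ia}(t-s)=\int_s^t e^{ia(1-w_\sigma)}\,\dd\Phi^w_\sigma(a)$, applies the Young estimate of Theorem~\ref{th:young} (interpolating the $\delta$-H\"older norm of $\sigma\mapsto e^{ia(1-w_\sigma)}$, which costs a factor $|a|^{\eps}$) to get $|t-s|\lesssim \|\Phi^w\|_{\mathcal W_T^{\gamma,\rho}}\,|t-s|^\gamma(1+|a|)^{\eps-\rho}$ for any $(1-\gamma)/\delta<\eps<\rho$, and then sends $|a|\to\infty$ at fixed $s<t$. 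You instead couple the two scales, taking $|a|\sim\tau^{-\delta}$ with $\tau=t-s$, prove the lower bound $|\Phi^w_{s,t}(a)|\ge\tau/2$ by freezing the phase at time $s$, and send $\tau\to0$; the computation is a one-line Taylor/H\"older estimate and avoids Young integration entirely. A concrete payoff of your route is that, as you note, the hypothesis $\gamma+\delta>1$ is never used: in the paper that hypothesis is precisely what makes the Young pairing well defined (one needs $\gamma+\delta\theta>1$ for some admissible interpolation exponent $\theta\le1$), so your argument is in fact slightly more general, while the paper's version has the virtue of reusing machinery already developed for the equations. The only degenerate case your explicit formula for $a$ does not literally cover is a constant $w$ (H\"older constant $L=0$), where $|\Phi^w_{s,t}(a)|=\tau$ for every $a$ and the conclusion is immediate.
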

\begin{proof}
Let us assume that $w:[0,1]\to \RR$ is $(\rho,\gamma)$-irregular. A simple computation then gives
$$
e^{ia}(t-s)=\int_{s}^te^{ia(1- w_\sigma)}e^{ia w_\sigma}\dd\sigma=\int_s^te^{ia(1- w_\sigma)}\dd\Phi^w_\sigma(a)
$$ 
 where the integral in the r.h.s. is understood in the Young sense according to  Theorem~\eqref{th:young} below. From the Young integral estimates and the $(\rho,\gamma)$-irregularity hypothesis we readily obtain that
$$
|t-s|=|e^{ia}(t-s)|\lesssim ||\Phi^w||_{\mathcal W_T^{\gamma,\rho}} |t-s|^\gamma (1+|a|)^{\eps-\rho}
$$
for $(1-\gamma)/\delta<\eps<\rho$ which is obviously cannot be true for $|a|$ sufficiently large and this allows to conclude that $||\Phi^w||_{\mathcal W_T^{\gamma,\rho}} =+\infty$.
\end{proof}
\begin{remark}
What is nice about this theorem is the fact that upper bounds on $\rho$ gives lower bounds on the H\"older index and
a simple consequence is that there exist no $\rho$-irregular Lipschitz function if $\rho>1/2$. 
\end{remark}

\bigskip
To deal with NLS type equations with irregular modulations in the sense of Definition~\ref{def:irregularity} we develop two different techniques which give complementary results:
\begin{enumerate}
\item \emph{Controlled paths approach.} We use the idea of controlled paths introduced  by Gubinelli~\cite{GubinelliKdV} to analyse  the periodic KdV equation in negative Sobolev spaces (and more general Fourier-Lebesgue spaces) without relying on Bourgain spaces and the time-homogeneity of the equation. This work has connection to the normal form analysis of Babin, Ilyin and Titi of the same equation~\cite{babin-kdv}.
\item \emph{Modulated Strichartz estimates.} Debussche and Tsutsumi~\cite{Debussche2011363} prove stochastic Strichartz estimates for the  Schr\"odinger semigroup with Brownian modulation. The notion of irregular paths allows to generalise their approach to a wide class of modulations and to exploit the Strichartz estimates to solve the NLS on $\RR$ with general non-linearity up to the quintic case which is critical for the non-modulated NLS. 
\end{enumerate}

Let us now summarise the main contributions of this paper. All along which we are going to make the following basic assumption:
\begin{hypothesis}
\label{hyp:main}
The function $w$ is $(\rho,\gamma)$-irregular for some $\rho >0$ and $\gamma>1/2$.
\end{hypothesis}
Our first  result is about the modulated cubic (NLS) equation.
\begin{theorem}
\label{th:nls-global}
Assume that $\rho>1/2$. Then the modulated cubic NLS equation on $\TT$ and $\RR$ has a  global solution in $H^\alpha$ for any $\alpha\ge 0$. Uniqueness holds in the subspace $\mathcal{D}^w(H^\alpha)\subset C(\RR_+,H^\alpha)$ (defined below) and the flow is locally Lipschitz continuous in $\mathcal{D}^w(H^\alpha)$ endowed with a suitable norm.
\end{theorem}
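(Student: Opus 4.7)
The plan is to recast the mild equation \eqref{eq:mild} as a fixed point problem in the controlled path space $\mathcal{D}^w(H^\alpha)$, following the strategy developed in \cite{GubinelliKdV}. Conjugating by the modulated linear group one introduces the new unknown $\psi_t = (U^w_t)^{-1}\varphi_t$, which formally satisfies $\partial_t \psi_t = (U^w_t)^{-1}\mathcal{N}(U^w_t \psi_t)$, and one defines $\mathcal{D}^w(H^\alpha)$ as the space of paths $\psi \in C(\ott;H^\alpha)$ whose increments $\psi_t - \psi_s$ decompose, to leading order in $|t-s|$, as a Young-type integral against the oscillatory primitive $\Phi^w$, up to a remainder of strictly higher Hölder regularity. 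The contraction will be performed on a small interval using a norm combining $\sup_t \|\psi_t\|_{H^\alpha}$ with a suitable Hölder seminorm of this remainder.

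The heart of the argument is a trilinear estimate on the Duhamel term
$$
Q_{s,t}(\psi) = \int_s^t (U^w_r)^{-1}\mathcal{N}(U^w_r\psi_r)\, \dd r.
$$
For $\mathcal{N}(\phi) = i|\phi|^2\phi$ this becomes, in Fourier variables, a triple convolution weighted by the phase $e^{iw_r\Omega}$ with $\Omega = \xi^2 - \xi_1^2 + \xi_2^2 - \xi_3^2$ and $\xi = \xi_1 - \xi_2 + \xi_3$. Freezing $\psi$ at time $s$ one obtains the leading contribution
$$
\sum_{\xi_1-\xi_2+\xi_3 = \xi} \Phi^w_{s,t}(\Omega)\, \hat\psi_s(\xi_1)\overline{\hat\psi_s(\xi_2)}\hat\psi_s(\xi_3),
$$
while the difference $Q_{s,t}(\psi)$ minus this frozen trilinear expression is a Young-type remainder controlled via the $\gamma$-Hölder regularity of $\Phi^w$ and the controlled-path structure of $\psi$. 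The irregularity hypothesis yields $|\Phi^w_{s,t}(\Omega)| \lesssim \|\Phi^w\|_{\mathcal{W}^{\rho,\gamma}_T}(1+|\Omega|)^{-\rho}|t-s|^\gamma$, and on the non-resonant region the factorisation $\Omega = 2(\xi_2 - \xi_1)(\xi_2 - \xi_3)$ (valid both on $\TT$ and $\RR$) allows one to convert decay in $|\Omega|$ into decay in the largest Fourier frequency; the threshold $\rho > 1/2$ then provides exactly the fractional smoothing needed to absorb the loss of the cubic product in $H^\alpha$ for every $\alpha \geq 0$. The resonant contributions, concentrated on $\{\xi_2=\xi_1\}\cup\{\xi_2=\xi_3\}$, collapse after a Wick-type reorganisation to a term proportional to $\|\psi_s\|_{H^0}^2 \psi_s$, which is trivially tame.

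Armed with this trilinear bound, a standard Banach fixed-point argument in $\mathcal{D}^w(H^\alpha)$ produces a local solution on an interval $[0,T_*]$ whose length depends only on $\|\phi\|_{H^\alpha}$ and on $\|\Phi^w\|_{\mathcal{W}^{\rho,\gamma}_T}$. Conservation of the $H^0$ norm of $\varphi$, which follows at the Galerkin level since each $\mathcal{N}_N$ is Hamiltonian and $U^w$ is unitary, then allows the iteration: inspection of the trilinear estimate shows that the time of persistence of the $H^\alpha$ regularity can be taken to depend only on $\|\psi_s\|_{H^0}=\|\phi\|_{H^0}$, so that finitely many repetitions of the local argument reach any prescribed finite horizon. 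Uniqueness within $\mathcal{D}^w(H^\alpha)$ and local Lipschitz continuity of the flow are direct consequences of the contraction property. The main obstacle is to close the trilinear estimate in a form that decouples the time of existence from the high-frequency $H^\alpha$ norm, and to bound the Young remainder so that both the leading and corrective terms reside in the same controlled-path space; this is precisely where the condition $\rho > 1/2$ and the detailed structure of $\mathcal{D}^w(H^\alpha)$ are exploited.
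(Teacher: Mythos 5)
Your proposal follows essentially the same route as the paper: the conjugated unknown $\psi_t=(U^w_t)^{-1}\varphi_t$ solving a nonlinear Young equation in the controlled space, the trilinear Fourier estimate with the resonant part collapsing to $\|\psi\|_{H^0}^2\psi$ and the non-resonant part controlled via the factorisation of the phase and the $(\rho,\gamma)$-irregularity with $\rho>1/2$, and globalisation by combining the $L^2$ conservation law with a tame estimate that places the $H^\alpha$ norm only on the factor carrying the largest frequency (the paper's splitting $X^2=X^{21}+X^{22}+X^{23}$). The argument is correct and matches the paper's proof in all essential respects.
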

We remark that the globals existence result for the cubic modulated NLS in $H^\alpha$ $\alpha\in(0,1)$ are a novel result as the Strichatrz  estimates obtained by Debussche and Debouard  
in \cite{deBouard20101300} allow only to obtain global existence for $\alpha\geq1$ or $\alpha=0$. Moreover the global wellposedness on the torus is also a novel fact due to the fact that the Strichartz estimates obtained in~\cite{deBouard20101300} to solve the equation are only available for the real line, and is also relevant to remark that even in the regular case $w_t=t$ the global existence result below $H^1$ is obtained by Bourgain in~\cite{bour1} using the Fourier truncation method.
In the case of Brownian modulation the global solution for $\alpha=0$ has already been constructed by de~Bouard et Debussche~\cite{deBouard20101300}. Here we extend their result to any $\alpha \ge 0$ and any sufficiently irregular modulation. Global solutions for any $\alpha \ge 0$ are the result of the $L^2$ conservation law and some regularity preservation estimates for the non-linear term. The possibility of proving existence of global solution with positive regularity is a regularity preservation phenomenon which seems to be due to the irregular character of the perturbation. In this sense we can consider this as another instance of the regularisation by noise phenomenon. 

Let us now recall what happens when $w_t=t$, Tsutsumi~\cite{tsu} proved the global well-posedness of the cubic NLS in $L^2(\mathbb R)$. His proof is based on the dispersive properties of the linear Schr\"odinger operator expressed by the Strichartz estimates and the conservation of the $L^2$-norm. For the periodic problem on the torus, Bourgain in~\cite{bour1} introduced the family of $X^{s,b}$ spaces and used them to prove the global well-posedness of cubic NLS in $L^2(\mathbb T)$. His argument is based on a periodic $L^4$ Strichartz estimates and the conservation of the $L^2$-norm. On $\mathbb R$ Kenig--Ponce--Vega prove in \cite{kpv} the failure of the uniform continuity of the flow in $H^\alpha$ for $\alpha<0$ in the focusing case and the same result is obtained by Christ-Colliander-Tao~\cite{cct} for the defocusing case. In the periodic setting the corresponding results are obtained by N. Burq, P. G\'erard and N. Tzvetkov in~\cite{bgt}.

The techniques we use for NLS work for other similar models. In particular in this paper we consider also the following situations for which we obtain partial results.

\begin{theorem}
\label{th:nse-others} 
\begin{enumerate}
\item If $\rho > 1/2$ the modulated cubic NLS equation on $\mathbb R^2$ has a unique local solution in $H^{\alpha}$ if $\alpha\geq 1/2$;
\item If $\rho > 1$ the modulated dNLS equation on $\mathbb T$ has  a unique local solution in $H^{\alpha}$ if $\alpha\geq 1/2$.
\end{enumerate}
\end{theorem}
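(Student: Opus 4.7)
The plan is to adapt the controlled paths approach outlined in the introduction (used for cubic NLS on $\TT$ and $\RR$) to the two-dimensional setting and to the derivative-in-space situation, working directly with the mild formulation~\eqref{eq:mild} and exploiting Hypothesis~\ref{hyp:main}. First, I would set up a space $\mathcal{D}^w(H^\alpha)$ of paths $\varphi\in C([0,T];H^\alpha)$ whose preimage $\psi_t = (U^w_t)^{-1}\varphi_t$ is $\gamma$--Hölder with values in a slightly smaller space, so that $\varphi$ is ``controlled'' by the modulated linear flow. The candidate fixed--point map is
$$
\Gamma(\varphi)_t = U^w_t \bigl[\varphi_0 + Q_t(\varphi)\bigr], \qquad Q_t(\varphi)=\int_0^t (U^w_s)^{-1}\cN(\varphi_s)\,\dd s,
$$
and local well-posedness reduces to showing that $Q$ maps a ball of $\mathcal{D}^w(H^\alpha)$ into itself contractively for small $T$.

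The heart of the matter is a trilinear estimate for $Q$. Writing everything on the Fourier side and setting $\hat\varphi(\xi)=e^{iw_t|\xi|^2}\hat\psi(\xi)$ turns $(U^w_s)^{-1}\cN(\varphi_s)$ into an integral over frequency triples $(\xi_1,\xi_2,\xi_3)$ with $\xi=\xi_1-\xi_2+\xi_3$ carrying the oscillatory factor $e^{-iw_s\Omega}$, where $\Omega=|\xi|^2-|\xi_1|^2+|\xi_2|^2-|\xi_3|^2$ is the usual NLS resonance function. Time integration against $\dd w$, interpreted via Young's theorem (Theorem~\ref{th:young}) and the $(\rho,\gamma)$-irregularity of $w$, produces a gain $(1+|\Omega|)^{-\rho}$ on each non-resonant frequency interaction, together with a H\"older factor $|t-s|^\gamma$. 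The desired estimate then takes the form
$$
\|Q_t(\varphi)-Q_s(\varphi)\|_{H^\alpha} \lesssim |t-s|^\gamma \,\|\Phi^w\|_{\mathcal{W}^{\rho,\gamma}_T}\,\|\varphi\|_{\mathcal{D}^w(H^\alpha)}^3,
$$
with a matching Lipschitz estimate in the difference of two inputs, obtained by standard multilinearization.

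For the two-dimensional cubic NLS on $\RR^2$, I would combine the gain $(1+|\Omega|)^{-\rho}$ with $\rho>1/2$ and $\alpha\geq 1/2$, using the algebra property of $H^{1/2+}(\RR^2)$ up to logarithmic losses and a dyadic frequency decomposition: in each dyadic block one estimates the trilinear form after extracting the oscillation gain, distinguishing the regime where two of the frequencies are comparable and large (where $|\Omega|$ is correspondingly large and absorbs the 2D Sobolev loss) from the low--high regime (where the controlled-path structure absorbs the worst case). For dNLS on $\TT$, the Wick ordering by construction removes exactly the diagonal contributions $\xi_1=\xi$ and $\xi_3=\xi$ on which $\Omega=0$; on the remaining triples one has $|\Omega|\geq 2$, and the derivative $\partial^\theta$ contributes $|\xi|^\theta$ which is absorbed into the gain $(1+|\Omega|)^{-\rho}$ using $\rho>1/2$ together with the elementary bound $|\xi|\lesssim (|\xi_1|+|\xi_2|+|\xi_3|)$ and $|\Omega|\gtrsim \max(|\xi_1-\xi_2|,|\xi_2-\xi_3|)\cdot\max(|\xi_1-\xi_2|,|\xi_2-\xi_3|)$ on the torus. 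Once the trilinear bounds are in place, a standard contraction argument on a small ball of $\mathcal{D}^w(H^\alpha)$ over a short time interval yields both existence and uniqueness.

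The main obstacle is the trilinear estimate itself in the 2D case at the endpoint $\alpha=1/2$. Unlike the 1D torus, where the resonance function factorizes very favourably, on $\RR^2$ one must carefully handle the geometry of the resonant set, and balance the gain $(1+|\Omega|)^{-\rho}$ with $\rho$ slightly larger than $1/2$ against the failure of $H^{1/2}(\RR^2)$ to be an algebra. For dNLS, the technical obstacle is rather algebraic: ensuring that the Wick counterterm cancels precisely the resonant singular frequencies so that the estimate reduces to a purely non-resonant sum where $\theta$ can be absorbed into $\rho>1/2$. In both cases, the structure of the proof follows the controlled paths blueprint announced after Definition~\ref{def:irregularity}, the novelty lying entirely in the adapted multilinear analysis.
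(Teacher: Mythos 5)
Your overall framework is exactly the paper's: the theorem is reduced, via the controlled-path space and the Young machinery of Sections~\ref{sec:young}--\ref{sec:sol}, to showing that the trilinear operator $X_{s,t}$ is $\gamma$-H\"older in $\mathcal{L}_3 H^\alpha$ with $\gamma>1/2$, the gain coming from the factor $\Phi^w_{s,t}(\Xi)$ evaluated on the resonance function. The gap is that the multilinear estimates themselves --- which are the entire content of the theorem once the abstract machinery is in place --- are not carried out, and the routes you sketch for them would not work. On $\RR^2$ you invoke ``the algebra property of $H^{1/2+}(\RR^2)$ up to logarithmic losses'': $H^s(\RR^2)$ is an algebra only for $s>1$, and the failure at $s=1/2$ is by a full power of the frequency, not a logarithm. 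More seriously, your case distinction rests on the claim that when two frequencies are comparable and large the resonance $\Omega$ is correspondingly large; on $\RR^2$ this is false, since $\Omega=2\langle\xi-\xi_2,\xi-\xi_3\rangle$ vanishes on the whole set where these two vectors are orthogonal, however large the frequencies. That orthogonality set is precisely the new two-dimensional difficulty. The paper's proof handles it by changing variables to $q_i=\xi-\xi_i$ and splitting $q_3$ into components parallel and perpendicular to $q_2$: integrating $(1+2|q_2||q_3^\||)^{-2\rho}$ in the parallel variable gains a factor $|q_2|^{-1}$, which is locally integrable in two dimensions and is controlled at infinity by the weight $(1+|\xi+q_2|^2)^{-\alpha}$ with $\alpha>1/2$. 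Nothing in your sketch produces this mechanism.

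For dNLS the absorption of the derivative loss is likewise unsubstantiated. The lower bound $|\Omega|\gtrsim\max(|\xi_1-\xi_2|,|\xi_2-\xi_3|)^2$ is wrong: with $k=-k_1+k_2+k_3$ one has $\Omega=2(k_3-k_1)(k_2-k_1)$, a product of two independent factors either of which can equal $1$ while the other is huge. The mechanism that actually works is the quantitative convolution bound $\sum_{l\neq 0,k}|l|^{-2\alpha}|k-l|^{-2\rho}\lesssim|k|^{-\theta}$ of Lemma~\ref{lemma:sum-bound}, which splits the loss $|k|^{2\theta}$ between the Sobolev weight (requiring $\alpha\geq\theta/2$) and the resonance gain, and which in Theorem~\ref{th:rg-DNLS} forces the hypothesis $\rho>\max(1/2,\theta)$ --- strictly stronger than your claimed $\rho>1/2$ as soon as $\theta>1/2$. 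Without an estimate of this type, and without tracking how $\theta$ enters the admissible range of $\rho$, your argument does not close.
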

A key argument in the proof of all these results is the use of explicit computations allowed by the polynomial character of the non-linearity. These results are however limited to  modulations irregular enough.
 An open problem is to fill the gap between regular and irregular modulations.  One interesting by-product of the controlled approach is the existence of an Euler scheme to approximate the solutions with explicit control of the convergence rate. 

\medskip
A completely different line of attack to the modulated Schr\"odinger equation  comes from the application of the following Strichartz type estimate which can be proved under the same $\rho$--irregularity assumption of Hypothesis~\ref{hyp:main}: 

\begin{theorem}\label{theorem:strichartz-w}
Let $A=i\partial^2_x$, $T>0$, $p\in(2,5]$, $\rho>\min(\frac{3}{2}-\frac{2}{p},1)$ then there exists a finite constant $C_{w,T}>0$  and $\gamma^{\star}(p)>0$ such that the following inequality holds:
$$
\left|\left|\int_{0}^{.}U^{w}_{.}(U_{s}^{w})^{-1}\psi_s\dd s\right|\right|_{L^{p}([0,T],L^{2p}(\mathbb R))}\leq C_{w}T^{\gamma^{\star}(p)}||\psi||_{L^1([0,T],L^2(\mathbb R))}
$$
 for all $\psi\in L^1([0,T],L^2(\mathbb R))$.
 \end{theorem}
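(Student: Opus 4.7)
My plan is to adapt the classical $TT^*$ strategy for Strichartz estimates, replacing the dispersive decay $|\tau|^{-1/2}$ of the free Schr\"odinger group by the smoothing effect encoded in the occupation-measure bound of Definition~\ref{def:irregularity}. This parallels the Debussche--Tsutsumi argument for Brownian modulation, with their stochastic tools (Burkholder/It\^o) replaced by a pathwise Young-type integration against $\Phi^w_{0,\cdot}(\xi^2)$.

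A first reduction: by Minkowski in the time variable $s$ and the $L^2_x$-isometry of $e^{-iw_s\partial^2_x}$, the inhomogeneous retarded bound of Theorem~\ref{theorem:strichartz-w} is implied by the homogeneous Strichartz estimate
$$
\|e^{iw_t\partial^2_x}f\|_{L^p_t([0,T];L^{2p}_x(\RR))}\le C_w T^{\gamma^{\star}(p)}\|f\|_{L^2_x}.
$$
By the $TT^*$ identity and Plancherel in $x$, this is in turn equivalent to the bilinear estimate
$$
\int_0^T\!\!\int_0^T\langle e^{i(w_t-w_s)\partial^2_x}\phi(t,\cdot),\phi(s,\cdot)\rangle_{L^2_x}\,dt\,ds\le C_w^2T^{2\gamma^{\star}(p)}\|\phi\|^2_{L^{p'}_tL^{(2p)'}_x}.
$$

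I would bound the inner product by two complementary estimates: (i) the classical dispersion $\|e^{i\tau\partial^2_x}\|_{L^{(2p)'}\to L^{2p}}\lesssim|\tau|^{-(p-1)/(2p)}$, producing a time-kernel $|w_t-w_s|^{-(p-1)/(2p)}$, useful only when $|w_t-w_s|$ is not too small; and (ii) a high-frequency smoothing derived from Hypothesis~\ref{hyp:main},
$$
\Big|\int_0^T e^{i\xi^2 w_s}g(s)\,ds\Big|\lesssim \|\Phi^w\|_{\mathcal{W}^{\rho,\gamma}_T}\langle\xi\rangle^{-2\rho}\bigl(T^\gamma\|g\|_{L^\infty_s}+T^{\gamma+\delta}[g]_{C^\delta_s}\bigr),
$$
obtained by integrating by parts in the Young sense against $d\Phi^w_{0,s}(\xi^2)$ (valid as soon as $\gamma+\delta>1$). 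A Littlewood--Paley decomposition of $\phi$ in $x$, together with an optimised dyadic cutoff combining (i) at low frequencies and (ii) at high frequencies, yields an effective time-kernel of the form $|t-s|^{-\beta(p,\rho,\gamma)}$ with $\beta<1$ precisely under the hypothesis $\rho>\min(3/2-2/p,1)$. A Hardy--Littlewood--Sobolev inequality (at the endpoint $p=5$) or Young's convolution inequality (for $p<5$) applied to $t\mapsto\|\phi(t,\cdot)\|_{L^{(2p)'}}\in L^{p'}_t$ then closes the estimate, the positive power $\gamma^{\star}(p)>0$ arising from integrating $|t-s|^{-\beta}$ on $[0,T]$.

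The main obstacle is precisely this interpolation. Estimate (i) is insensitive to the detailed behaviour of $w$ but degenerates whenever $w$ is slow; (ii) produces clean $\xi$-decay but needs regularity of $\phi$ in the time variable that has to be manufactured by interpolating against the trivial bound $|\Phi^w_{0,s}(\xi^2)|\le T$. Ensuring that the resulting $\beta$ can be taken strictly below $1$ under the minimal hypothesis $\rho>\min(3/2-2/p,1)$ is the delicate analytic step; the saturation value $\rho>1$ appearing for $p\ge 4$ reflects that past this point the extra $\xi$-decay produced by the occupation measure alone ceases to beat the Hardy--Littlewood--Sobolev scaling, and any further improvement would require a different pairing of (i) and (ii).
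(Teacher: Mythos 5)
Your overall architecture (reduce to the homogeneous estimate by Minkowski, then $TT^*$, then bound the kernel and close with Hardy--Littlewood--Sobolev) is the classical Strichartz scheme, and the first reduction is fine: the retarded bound with $L^1_tL^2_x$ data does follow from the homogeneous estimate for the shifted paths $w_\cdot-w_s$, whose irregularity constants are uniform in $s$. The difficulty is that the two ``complementary estimates'' you propose to combine both fail at the point where they would have to carry the argument, and the interpolation that is supposed to glue them is not available. First, the dispersive bound produces the kernel $|w_t-w_s|^{-(p-1)/(2p)}$, not $|t-s|^{-\beta}$. For the modulations covered by the theorem one has $\rho>1/2$, and by the paper's own Theorem 1.5 / Remark 1.6 such a $w$ cannot be Lipschitz; more to the point, an irregular $w$ is highly recurrent, so there is no pointwise comparison $|w_t-w_s|\gtrsim|t-s|^{c}$ and no mechanism in your argument converts the one kernel into the other. (The occupation measure does control energies $\int\!\!\int|w_t-w_s|^{-\lambda}\,\dd s\,\dd t$, but that only yields an $L^2_t\times L^2_t$ bound on the bilinear form, i.e.\ $p=2$, not the $L^{p'}_t\times L^{p'}_t$ bound you need.) Second, the smoothing estimate (ii) is a Young-integral bound against $\dd\Phi^w_{0,\cdot}(\xi^2)$ and genuinely requires $\gamma+\delta>1$ worth of H\"older regularity of the integrand in time. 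A function $\phi\in L^{p'}_tL^{(2p)'}_x$ has none, and no interpolation between ``$C^\delta_t$ with constant $\langle\xi\rangle^{-2\rho}$'' and ``$L^1_t$ with constant $T$'' lands on $L^{p'}_t$: indeed the map $g\mapsto\int_0^Te^{i\xi^2w_t}g(t)\,\dd t$ has \emph{no} decay in $\xi$ uniformly over bounded sets of $L^{p'}_t$ (test with $g(t)=e^{-i\xi^2 w_t}$ suitably normalised). So the ``optimised dyadic cutoff'' producing an effective kernel $|t-s|^{-\beta}$ with $\beta<1$ under $\rho>\min(3/2-2/p,1)$ is exactly the missing proof, and I do not see how to supply it along these lines.

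The paper sidesteps both obstructions by never testing the oscillatory time integral against a rough function. Following Debussche--Tsutsumi, it estimates $\int_0^T\|D^{\alpha/2}|v_t|^2\|_{L^2_x}^2\,\dd t$ for $v_t=\int_0^tU^w_t(U^w_s)^{-1}\psi_s\,\dd s$: expanding in Fourier variables and integrating in $t$ \emph{first} (Fubini over the region where one $s_i$ is maximal), the $t$-integral is taken against the constant function $1$ on $[\max_is_i,T]$ and hence produces exactly $\Phi^w_{\max_is_i,T}(2(x_2-x_1)(x_3-x_1))$ -- no Young integration, no time regularity of $\psi$ required. The $(\rho,\gamma)$-irregularity then converts the decay in the resonance variable into a gain of $\alpha$ derivatives on $|v_t|^2$ via the quadrilinear Lemma~\ref{lemma=Sch-estimates} (this is where the thresholds $\rho>\alpha+1/2$ for $\alpha<1$ and $\rho>1$ for $\alpha=1$ come from), and the Gagliardo--Nirenberg inequality of Lemma~\ref{lemma:Gagliardo-Nirenberg}, interpolated against the trivial bound $\||v_t|^2\|_{L^1_x}=\|v_t\|_{L^2_x}^2\le\|\psi\|_{L^1_tL^2_x}^2$, converts that derivative gain into $L^{2p}_x$ integrability. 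If you want to keep a $TT^*$ flavour, the honest statement is that the paper's quadrilinear estimate \emph{is} the correct replacement for the $TT^*$ kernel bound; I would redirect your effort to that computation rather than to repairing the dispersive/HLS route.
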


As an application we obtain global well--posedness for the modulated NLS equation with generic power nonlinearity i.e.: $\cN(\phi)=|\phi|^{\mu}\phi$: 

\begin{theorem}\label{th:Strich-existence}
Let $\mu\in(1,4]$, $p=\mu+1$, $\rho>\min(1,3/2-\frac{2}{p})$ and $u^0\in L^{2}(\mathbb R)$ then there exists $T>0$  and  a unique $u\in L^{p}([0,T],L^{2p}(\mathbb R)) $ such that the following equality holds:
$$
u_t=U^{w}_tu^{0}+i\int_{0}^{t}U^{w}_t(U^{w}_{s})^{-1}(|u_s|^{\mu} u_s)\dd s
$$
for all $t\in[0,T]$. Moreover we have that $||u_t||_{L^2(\mathbb R)}=||u_0||_{L^2(\mathbb R)}$ and then we have a global unique solution $u\in L_{loc}^p([0,+\infty),L^{2p}(\mathbb R))$ and $u\in C([0,+\infty),L^2(\mathbb R))$. If $u^0\in H^1(\mathbb R)$ then $u\in C([0,\infty),H^{1}(\mathbb R))$.
\end{theorem}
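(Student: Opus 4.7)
The natural strategy is a Banach contraction argument in the space $X_T := L^p([0,T],L^{2p}(\mathbb R))$, exploiting Theorem~\ref{theorem:strichartz-w} for the Duhamel term together with its homogeneous counterpart $\|U^w_\cdot u^0\|_{X_T} \le C_{w,T}\|u^0\|_{L^2}$ (obtained from the inhomogeneous bound by a standard $TT^*$/duality argument). I would look for a fixed point of
$$
\Phi(u)_t := U^w_t u^0 + i\int_0^t U^w_t (U^w_s)^{-1}\bigl(|u_s|^\mu u_s\bigr)\,\dd s
$$
in a closed ball of $X_{T^\star}$ of radius proportional to $\|u^0\|_{L^2}$.

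The crucial algebraic point is that $p=\mu+1$, so $\||u|^\mu u\|_{L^2_x}=\|u\|_{L^{2p}_x}^p$ and therefore
$$
\bigl\||u|^\mu u\bigr\|_{L^1_t L^2_x} = \int_0^T\|u_s\|_{L^{2p}_x}^p\,\dd s = \|u\|_{X_T}^p.
$$
Combined with Theorem~\ref{theorem:strichartz-w} this gives $\|\Phi(u)-U^w_\cdot u^0\|_{X_T}\le C_{w}T^{\gamma^\star(p)}\|u\|_{X_T}^p$. For the contraction estimate, the pointwise bound $\bigl||u|^\mu u-|v|^\mu v\bigr|\lesssim (|u|^\mu+|v|^\mu)|u-v|$ together with Hölder in space yields
$$
\bigl\||u|^\mu u-|v|^\mu v\bigr\|_{L^2_x}\lesssim \bigl(\|u\|_{L^{2p}_x}^\mu+\|v\|_{L^{2p}_x}^\mu\bigr)\|u-v\|_{L^{2p}_x},
$$
and Hölder in time (using $\mu/p+1/p=1$) produces the analogous $X_T$-estimate. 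Choosing $T^\star$ small quantitatively in terms of $\|u^0\|_{L^2}$ and $C_w$ gives both invariance of the ball and the contraction property, hence a unique $u\in X_{T^\star}$ solving the mild equation, by Banach's fixed point theorem.

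To pass from local to global, I would prove the conservation law $\|u_t\|_{L^2}=\|u^0\|_{L^2}$ by approximation. Formally the identity follows from the fact that $U^w$ is an $L^2$-isometry and $\mathrm{Re}\langle i|u|^\mu u,u\rangle_{L^2}=0$, but at the regularity level $L^2$ the solution is only in $L^p_tL^{2p}_x$ and cannot be tested against $\bar u$ directly. The rigorous route is to introduce Galerkin truncations $u^N$ satisfying $\Pi_N$-projected equations, for which conservation is immediate, and to show that $u^N\to u$ in $X_{T^\star}$ using uniform-in-$N$ Strichartz bounds (which are available since Theorem~\ref{theorem:strichartz-w} is deterministic and pathwise). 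Since $T^\star$ depends only on the conserved $L^2$-norm, this allows iteration of the local result on arbitrarily long intervals, yielding $u\in L^p_{\mathrm{loc}}([0,+\infty),L^{2p}(\mathbb R))$ and, via the mild formulation and the Strichartz bound on the Duhamel term, continuity $u\in C([0,+\infty),L^2(\mathbb R))$. Persistence of $H^1$-regularity when $u^0\in H^1(\mathbb R)$ then reduces to applying the same fixed-point scheme to the equation satisfied by $\partial_x u$, using Sobolev embedding and the $H^1$-version of the homogeneous Strichartz bound to close the nonlinear estimate; the rest is routine.

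The main obstacle I foresee is the rigorous justification of $L^2$-conservation at minimal regularity: matching the Galerkin-level conservation with the limiting Strichartz solution requires strong enough convergence to identify the nonlinearity $|u^N|^\mu u^N\to |u|^\mu u$ in $L^1_tL^2_x$, which in turn relies on uniform Strichartz estimates for the truncated propagators $\Pi_N U^w_\cdot(U^w_\cdot)^{-1}\Pi_N$. A secondary technical point is ensuring that the exponent $\gamma^\star(p)$ delivered by Theorem~\ref{theorem:strichartz-w} is strictly positive throughout $p\in(2,5]$ so that the $T^{\gamma^\star(p)}$ factor genuinely produces smallness, which constrains the required irregularity $\rho>\min(1,3/2-2/p)$.
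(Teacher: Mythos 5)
Your proposal follows essentially the same route as the paper's proof: a Banach contraction in $L^{p}([0,T],L^{2p}(\mathbb R))$ driven by Theorem~\ref{theorem:strichartz-w} together with the homogeneous bound on $U^{w}_{t}u^{0}$ (which the paper establishes directly as Proposition~\ref{proposition:strich-estim-3} via the same Gagliardo--Nirenberg/Fourier computation, rather than by duality from the inhomogeneous estimate), the identity $\||u|^{\mu}u\|_{L^1_tL^2_x}=\|u\|_{L^p_tL^{2p}_x}^{p}$ from $p=\mu+1$, Galerkin truncation to justify the $L^2$ conservation law at this low regularity, iteration on intervals whose length depends only on the conserved $L^2$ norm, and a differentiated Strichartz estimate for $H^1$ persistence. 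The only point your ``routine'' closing step leaves implicit --- and which the paper makes explicit --- is that the $W^{1,2p}$ estimate is \emph{linear} in the higher norm with small coefficient $T^{\gamma^{\star}(p)}r^{p-1}$, so the $H^1$ existence time depends only on the $L^2$-level radius $r$ and the $H^1$ regularity propagates globally despite the absence of an $H^1$ a priori bound.
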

 
We point out that all our techniques are deterministic and that they provide novel results even in the stochastic context, for example when $w$ is taken to be the sample path of a fractional Brownian motion. 
In the Brownian case and when $p<5$ it is possible to show that the solution obtained in Theorem~\ref{th:Strich-existence} coincides a.s. with the solutions obtained by de~Bouard and Debussche~\cite{deBouard20101300} (see Lemma~\ref{lemma:reg} for  details). These solutions corresponds to limits of solutions of Stratonovich type SPDEs. In this context our methods give also the existence of a continuous flow map for the limiting SPDE.

\bigskip

\textbf{Plan.} In Sect.~\ref{sec:controlled} we illustrate the controlled path approach to solution to modulated semilinear PDEs. This approach relies on a non-linear generalisation of the Young integral~\cite{[Young-1936],Lyons1998,[Gubinelli-2004]} for which we provide complete proofs in Sect.~\ref{sec:young}. Using the non-linear Young integral we define and solve Young-type differential equations in Sect.~\ref{sec:sol}. This will  provide a general theory for the constructions  and approximation of the controlled solutions. In Sect.~\ref{sec:reg} we verify that all our models satisfy the hypothesis to apply the general theory we outlined in the previous section and prove the  global existence for the modulated cubic (NLS) equation in $H^\alpha$ for all $\alpha\ge 0$. Finally Sect.~\ref{sec:strich-NLS} is dedicated to proving the Strichartz estimate of Thm.~\ref{theorem:strichartz-w} and apply it to the study the modulated NLS equation on $\RR$ with general non-linearity without relying on controlled solutions.

\medskip
\textbf{Notations.} If $V,W$ are two Hilbert spaces we let $\cL_n(V,W)$ be the Banach space of bounded operators on $V^{\otimes n}$ (considered with the Hilbert tensor product) with values in $W$ and endowed with the operator norm and set $\cL_n(V)=\cL_n(V,V)$. We let $T>0$ denote a fixed time and $C^\gamma([0,T],V)$ the space of $\gamma$-H\"older continuous functions form $[0,T]$ to $V$ endowed with the semi-norm
$$
\|f\|_{C^\gamma([0,T],V)} = \sup_{0\le s < t \le T} \frac{\|f_t-f_s\|_v}{|t-s|^\gamma}
$$
and by $C([0,T],V)$ the space of continuous functions with the sup norm. We will often write $f_{s;t}=f_t-f_s$ for the time increments of functions with values in vector spaces. The notation $J_{s,t}$ will be used instead for the evaluation of functions $J:[0,T]^2\to V$ depending on two times (i.e. not necessarily increments of function $[0,T]\to V$).
 If $V$ is a Banach space then $\mathrm{Lip}_M(V)$ will denote the Banach space of locally Lipshitz map on $V$ with polynomial growth of order $M\ge 0$, that is maps $f:V\to V$ such that
 $$
 \|f\|_{\textrm{Lip}_M(V)} = \sup_{x,y\in V} \frac{\|f(x)-f(y)\|_V}{\|x-y\|_V (1+\|x\|_V+\|y\|_V)^M}<+\infty .
 $$

\section{Controlled paths approach}\label{sec:controlled}

The approach we will use in proving Theorems~\ref{th:nls-global} and~\ref{th:nse-others} is based on ideas coming from the theory of controlled rough paths~\cite{[Gubinelli-2004],gubperime} which have been already used in a variety of contexts:
\begin{enumerate}
\item alternative formulation of rough path theory with the related applications to stochastic differential equations and in general to differential equations driven by non-semimartingale noises~\cites{gubinelli_young_2006,gubinelli_rough_2010,DGT12};
\item approximate evolution of three dimensional vortex lines in incompressible fluids where the initial condition is a non-smooth curve~\cite{bessaih_evolution_2005,brzezniak_global_2010}
\item study of the stochastic Burgers equation (multi-dimensional target space and various kind of robust approximation results)~\cites{hairer_rough_2011,gubperime};
\item definition of controlled (or energy, or martingale) solutions  for a class of SPDEs including the Kardar--Parisi--Zhang (KPZ) equations~\cite{gubinelli_regularization_2012};
\item Hairer's work on the well--posedness and uniqueness theory for the KPZ equation~\cite{hairer_solving_2011};
\item Recently the controlled path approach has also been used to highlight the regularisation by noise phenomenon in ODE with irregular additive perturbations~\cite{CatellierGubinelli} where techniques very similar to those used in this paper were first introduced: in particular the notion of $\rho$-irregularity and the non-linear Young integral.
\end{enumerate}

 Controlled paths are functions which ``looks like" some given reference object. In the case of eq.~\eqref{eq:mild} it looks quite clear that the solution should have the form $\varphi_t = U^w_t \psi_t$ for $\psi_t$ another continuous path in $V$ such that $\varphi_0 = \psi_0$. If we stipulate that $\psi$ has a nice time behaviour then $\varphi$ is somehow "following" the flow of a free solution of the linear equation, modulo a time-dependent slowly varying modulation. The space of controlled paths $\mathcal{D}^w$ (to be defined below) in which we will set up the equation will then be given by functions $\varphi$ such that an H\"older condition holds for $\psi_t = (U^w_{t})^{-1}\varphi_t$. Note that this space depends on the modulation and that different driving functions $w$ and $w'$ would give rise a priory to different spaces $\mathcal{D}^w$ and $\mathcal{D}^{w'}$ of controlled functions. This difference is somehow crucial and make the spaces of controlled paths to be more effective in the analysis of the non-linearities. Let us try to explain why. Assume that $\varphi$ is the simplest path controlled by $w$, that is the solution of the free evolution $\varphi_t = U_t^w \phi$ for some fixed $\phi\in V$ (i.e. not depending on time). In this case the non-linear term in eq.~\eqref{eq:mild} takes the form
$$
\phi_t= U_t \int_0^t  (U^w_{s})^{-1}\cN( U^w_s \phi) \dd s= U_t X_t(\phi)
$$
where  $X:\RR_+ \times V\to V$ is the time-inhomogeneous map given by
\begin{equation}
\label{eq:X}
X_t(\phi) =  \int_0^t  (U^w_{s})^{-1}\cN( U^w_s \phi) \dd s
\end{equation}
We will show that, in the specific settings we will consider, it is possible to actually prove the following regularity requirement:
\begin{hypothesis}
\label{hyp:X}
The map $X$ belongs to $C^\gamma([0,T];\mathrm{Lip}_M(V))$
for some $\gamma > 1/2$ and  $M\ge 0$. 
\end{hypothesis}

In this situation we see that $\phi_t$ is a controlled path such that $\Psi_t = (U^w_t)^{-1} \Phi_t$ belongs at least to $C^{1/2}([0,T],V)$. If we want a space of controlled paths stable under the fixed point map 
$$
\Gamma(\varphi)_t = U^w_t \varphi_0 + U^w_t \int_0^t  (U^w_{s})^{-1}\cN( \varphi_s) \dd s
$$
we have to require $t\mapsto (U^w_t)^{-1}\Gamma(\varphi)_t$ to be at least in $C^{1/2}([0,T],V)$ since otherwise even the first step of the Picard iterations will take us out of the space. These considerations suggest the following   definition of controlled paths:
\begin{definition}
\label{def:controlled-paths}
The space of paths $\mathcal{D}^w(V)$ controlled by $w$ consists of all paths $\varphi\in C([0,T],V)$ such that $t \mapsto \varphi^w_t =  (U^w)^{-1}_t \varphi_t$ belongs to $C^{1/2}([0,T],V)$. 
\end{definition}

At this point it is still not clear that the non-linear term is well defined for every controlled paths. Hypothesis~\ref{hyp:X} ensures that the non-linearity is well defined when the controlled path $\varphi$ is such that $\varphi^w$ is constant in time. To allow for more general controlled paths we consider a smooth (in space and time) path $f$: in this case the following computations can be easily justified in all the models we will consider:
$$
 \int_0^t  (U_{s}^w)^{-1}\cN( U^w_s f_s) \dd s  = \int_0^t \left[\frac{\dd}{\dd s}X_{s} \right](f_s) \dd s = \int_0^t X_{\dd s}(f_s) .
$$ 
where the last integral in the r.h.s. should be interpreted as the limit of suitable Riemann sums:
$$
\int_0^t X_{ds}(f_s) := \lim_{|\Pi_{0,t}| \to 0} \sum_{i} X_{ t_i  ;t_{i+1}}(f_{t_i}).
$$
where recall that $X_{s;t}=X_t-X_s$. A key observation is that the map $f \mapsto \int_0^t X_{\dd s}(f_s)$ can be extended by continuity to all the functions $f\in C^{1/2}(V)$ using the theory of Young integrals, indeed note that $X$ is a path of Lipshitz maps with H\"older regularity $\gamma >1/2$ and that this is enough to integrate functions of H\"older regularity $1/2$ since the sum of these two regularities exceed $1$. 
Since the kind of Young integral we use is not  standard we will provide  proofs and estimates in a self-contained fashion below. 
The time-integral of the non-linearity (even if not the non-linearity itself) results then to be a  well defined space distribution for all controlled paths and it is explicitly given by a Young integral involving the modulated operator $X$. We can  indeed recast the mild equation~\eqref{eq:mild} as a Young-type differential equation for controlled paths:
\begin{equation}
\label{eq:Young}
\varphi^w_t = \varphi_0 + \int_0^t X_{\dd s}(\varphi^w_s) .
\end{equation}
Any solution of this equation corresponds to a controlled path $\varphi_t = U^w_t \varphi^w_t$ which solves~\eqref{eq:mild} where the r.h.s. should be understood according to the Theorem~\ref{th:young}.

The Young equation~\eqref{eq:Young}
can then be solved, at least locally in time and in a unique way, in $C^{1/2}(\RR_+,V)$ by a standard fixed point argument. In some cases it is also possible to prove the existence of a conservation law implying  $\|\varphi_t\|_V=\|\varphi_0\|_V$  for all $t\ge 0$ and obtain global solutions. Another byproduct of this approach is the existence of a Lipshitz flow map on $V$.


\subsection{The nonlinear Young integral}
\label{sec:young}

Young theory of integration is well known~\cite{[Young-1936],Lyons1998,[LionsStFlour],[FrizVictoir],[Gubinelli-2004]}. Here we introduce a non-linear variant which is not covered by the standard assumptions. For the sake of completeness we derive here the main estimates in our specific context.

\begin{theorem}[Young]
\label{th:young}
Let $f\in C^\gamma([0,T],\mathrm{Lip}_M(V))$ and $g\in C^\rho([0,T], V) $ with $\gamma+\rho>1$ then the limit of Riemann sums
$$
I_t(f,g) = \int_0^t f_{\dd u}(g_u) =  \lim_{|\Pi| \to 0} \sum_{i} ( f_{t_{i+1}}( g_{t_i}) -f_{ t_i}( g_{t_i}))
$$
exists in $V$ as the partition $\Pi$ of $[0,t]$ is refined, it is independent of the partition, and we have
$$
\|I_t(f,g)-I_s(f,g) -(f_t-f_s)(g_s)\|_{V} \le (1-2^{1-\gamma-\rho})^{-1} \|f\|_{C^\gamma([0,T],\mathrm{Lip}_M(V))} \|g\|_{C^\rho([0,T], V)} (1+\|g\|_{C^0([0,T],V)})^M |t-s|^{\gamma+\rho}.
$$
\end{theorem}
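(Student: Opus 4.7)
The plan is to apply the sewing-lemma strategy to the nonlinear germ $\mu_{s,t} := (f_t - f_s)(g_s) \in V$, adapted from the standard vector-valued setting to one where the integrand takes values in $\mathrm{Lip}_M(V)$.

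\textbf{Step 1: defect estimate.} For $s < u < t$ one computes
$$
\mu_{s,t} - \mu_{s,u} - \mu_{u,t} = (f_t - f_u)(g_s) - (f_t - f_u)(g_u).
$$
Since $f_t - f_u \in \mathrm{Lip}_M(V)$, the definition of the locally Lipschitz norm with polynomial growth together with the Hölder bounds on $f$ and $g$ yields
$$
\|\mu_{s,t} - \mu_{s,u} - \mu_{u,t}\|_V \lesssim \|f\|_{\cC^\gamma \mathrm{Lip}_M(V)}\, \|g\|_{\cC^\rho V}\, (1 + \|g\|_{\cC^0 V})^M\, |t-s|^{\gamma+\rho}.
$$
The exponent $\gamma + \rho > 1$ is the key hypothesis that powers the entire argument.

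\textbf{Step 2: dyadic Young estimate.} For a partition $\Pi = \{s=t_0 < \dots < t_n = t\}$ of $[s,t]$, let $S_\Pi := \sum_{i=0}^{n-1}\mu_{t_i,t_{i+1}}$. By the pigeonhole principle some interior point $t_j$ can be removed so that the length of the new merged interval is at most $2(t-s)/(n-1)$; the change in $S_\Pi$ caused by the removal is exactly one defect term from Step 1 evaluated on an interval of that length. Iterating removals until only $\{s,t\}$ remains and summing the resulting geometric series in the number of intervals gives the uniform estimate
$$
\|S_\Pi - \mu_{s,t}\|_V \le (1 - 2^{1-\gamma-\rho})^{-1} \|f\|_{\cC^\gamma \mathrm{Lip}_M(V)}\, \|g\|_{\cC^\rho V}\, (1+\|g\|_{\cC^0 V})^M |t-s|^{\gamma+\rho}.
$$

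\textbf{Step 3: Cauchy property and identification of the limit.} Applying the same bound to the common refinement of two partitions shows $\{S_\Pi\}$ is Cauchy as $|\Pi| \to 0$ and its limit is independent of the approximating sequence. Taking $s=0$ defines $I_t$, and passing to the limit in the inequality of Step 2 produces the announced Hölder-type control of $I_t - I_s - (f_t - f_s)(g_s)$.

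The main obstacle is handling the polynomial-growth prefactor $(1 + \|x\|_V + \|y\|_V)^M$ intrinsic to $\mathrm{Lip}_M(V)$: one must check that throughout the dyadic removal the arguments of $g$ feeding each defect bound remain uniformly controlled by $\|g\|_{\cC^0 V}$, and that this factor enters each individual defect only multiplicatively, rather than compounding through composition across refinement levels. Once this is observed, the geometric summation underlying the classical sewing lemma goes through without modification, and the stated constant $(1-2^{1-\gamma-\rho})^{-1}$ emerges from the dyadic series $\sum_{k\ge 0} 2^{k(1-\gamma-\rho)}$.
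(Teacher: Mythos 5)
Your argument is essentially the paper's: both rest on the same coherence identity $\mu_{s,t}-\mu_{s,u}-\mu_{u,t}=(f_t-f_u)(g_s)-(f_t-f_u)(g_u)$, the same defect bound of order $|t-s|^{\gamma+\rho}$ with the polynomial factor controlled uniformly by $(1+2\|g\|_{\cC^0 V})^M$, and a sewing/Young iteration on top. The only substantive difference is organizational: the paper first treats smooth $f,g$, controls $J_{s,t}=I_{s,t}(f,g)-f_{s,t}(g_s)$ by a dyadic decomposition of $[s,t]$, extends by density in $\cC^{\gamma'}\mathrm{Lip}_M(V)\times\cC^{\rho'}V$, and only afterwards identifies the limit of Riemann sums over arbitrary partitions via the mesh estimate $\lesssim|\Pi|^{\gamma+\rho-1}|t-s|$; you instead apply the sewing lemma directly to the germ, which is more direct and sidesteps the paper's (asserted but unproved) approximation of $\mathrm{Lip}_M(V)$-valued H\"older paths by smooth ones. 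One caveat on your Step 2: the successive point-removal argument produces the bound $2^{\gamma+\rho}\sum_{k\ge1}k^{-(\gamma+\rho)}=2^{\gamma+\rho}\zeta(\gamma+\rho)$ --- a $p$-series, not a geometric one --- whereas the constant $(1-2^{1-\gamma-\rho})^{-1}=\sum_{k\ge0}2^{k(1-\gamma-\rho)}$ appearing in the statement comes from the dyadic midpoint refinement; so as written your Step 2 proves the theorem with a different (larger) admissible constant, and to reproduce the exact constant you should replace the point-removal by the dyadic splitting and then pass to arbitrary partitions as in Step 3. For every use made of the theorem later in the paper either constant is sufficient.
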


\begin{proof}
We give a new proof of this fact. Let $f,g$ be smooth functions in $\mathrm{Lip}_M(V)$ and $V$ respectively. Define the bilinear forms
$
I_{t}(f,g) = \int_0^t  (\dd_u f_u)(g_u)
$ 
and  $J_{s,t}(f,g) = I_{s;t}(f,g)-f_{s;t}(g_s)$ and note that these last satisfy
$
J_{s,t}(f,g) = J_{s,u}(f,g) + J_{u,t}(f,g) + (f_{u;t}(g_u)- f_{u;t}(g_s))
$
for all $s\le u \le t$.
Let $t^n_k = s+(t-s)k2^{-n}$ for $k=0,\dots,2^{n}$. By induction:
$$
J_{s,t}(f,g) = \sum_{i=0}^{2^n-1} J_{t^n_{i},t^n_{i+1}}(f,g) + \sum_{k=0}^n \sum_{i=0}^{2^k-1}
(f_{t^k_{2i+1};t^k_{2i+2}}(g_{t^k_{2i+1}})-f_{t^k_{2i+1};t^k_{2i+2}}(g_{t^k_{2i}}))
$$
Since $f,g$ are smooth $\|J_{t^n_i,t^n_{i+1}}(f,g)\|_{V} \lesssim_{f,g} |t^n_{i+1}-t^n_i|^2\lesssim_{f,g}  2^{-2n}$ so that
$
\|\sum_{i=0}^{2^n-1} J_{t^n_i,t^n_{i+1}}(f,g)\|_{V} \lesssim_{f,g} 2^{-n} \to 0 
$
as $n\to \infty$. Then we can estimate
$$
\|J_{s,t}(f,g)\|_{V} 
\le \sum_{k=0}^\infty  2^{k(1-\gamma-\rho)}\|f\|_{C^\gamma([0,T],  \mathrm{Lip}_M(V))} \|g\|_{\cC^\rho V } (1+\|g\|_{C([0,T], V)})^M
$$
$$
\le  (1-2^{1-\gamma-\rho})^{-1} \|f\|_{C^\gamma([0,T], \mathrm{Lip}_M(V))} \|g\|_{C^\rho([0,T], V) } (1+\|g\|_{C([0,T], V)})^M
$$

Now assume that $f\in C^\gamma([0,T], \mathrm{Lip}_M(V))$ and $g\in C^\rho([0,T], V)$. Then there exists sequences of smooth function $f_n$ and $g_n$ such that $f_n \to f$ in $C^{\gamma'}([0,T], \mathrm{Lip}_M(V))$ and $g_n \to g$ in 
$C^{\rho'}([0,T], V)$ for all $\gamma'<\gamma$ and all $\rho'<\rho$ and moreover such that $\|f_n\|_{ C^\gamma([0,T], \mathrm{Lip}_M(V))} \le \|f\|_{ C^\gamma([0,T], \mathrm{Lip}_M(V))}$ and $\|g_n\|_{C^\rho([0,T], V)} \le \|f\|_{C^\rho([0,T], V)}$ . The above estimate implies the convergence of  $J_{s,t}(f_n,g_n)\to J_{s,t}(f,g)$ in $V$ for all $s,t$. In turn this implies that, by passing to the limit in the estimate we have also
$
\|J_{s,t}(f,g)\|_{V} \le  (1-2^{1-\gamma-\rho})^{-1} \|f\|_{ C^\gamma([0,T], \mathrm{Lip}_M(V))} \|g\|_{C^\rho([0,T], V)} (1+\|g\|_{C([0,T], V)})^M.
$
Which means that we can define $t\mapsto I_t(f,g)$ such that 
$$
I_{s;t}(f,g) = \int_s^t f_{\dd u}(g_u) = f_{s;t}(g_s) + J_{s,t}(f,g)
$$
for any $f\in  C^\gamma([0,T], \mathrm{Lip}_M(V))$ and $g\in C^\rho([0,T], V)$.
Now assume that $\Pi =\{s\le t_0< t_1 < \cdots < t_n \le t\}$ is a partition of $[s,t]$ and denote with
$
S_\Pi = \sum_{i=0}^{n-1} f_{t_i; t_{i+1}}(g_{t_i})
$
the associate Riemann sum. By the above construction we have $ f_{t_i; t_{i+1}}(g_{t_i})=I_{t_{i+1}; t_i}(f,g)-J_{t_{i+1}, t_i}(f,g)$ with 
$
\|J_{ t_i,t_{i+1}}(f,g)\| \lesssim_{f,g} |t_{i+1}-t_i|^{\gamma+\rho}
$
and so
$$
S_\Pi = \sum_{i=0}^{n-1}I_{ t_i; t_{i+1}}(f,g)+\sum_{i=0}^{n-1}J_{ t_i,t_{i+1}}(f,g)=I_{s; t}(f,g) +\sum_{i=0}^{n-1}J_{ t_i,t_{i+1}}(f,g)
$$
moreover
$
\left\|\sum_{i=0}^{n-1}J_{ t_i,t_{i+1}}(f,g)\right\|_{V} \lesssim_{f,g} \sum_{i=0}^{n-1} |t_{i+1}-t_i|^{\gamma+\rho}  \lesssim_{f,g}  |\Pi|^{\gamma+\rho-1} |t-s|
$
which implies that $S_\Pi \to I_{s; t}(f,g)$ as $|\Pi|\to 0$ and the integral which we defined above by the continuous extension of the bilinear forms $I_{s; t}(f,g)$ coincides indeed with the limit of Riemann sums on arbitrary partitions. 
\end{proof}

\subsection{Young solutions}
\label{sec:sol}

With the estimates of Young integral we can set up a standard fixed point procedure to prove existence of local solution and their uniqueness.  

\begin{theorem}
\label{thm:young-eq}
Let $M\ge 0$ and $\gamma > 1/2$ and assume that $X\in C^\gamma(\mathrm{Lip}_M(V))$ and  $X_t(0)=0$. For any $\psi_0\in V$  there exists $T>0$ depending only on $\|X\|_{C^{\gamma}([0,T],\mathrm{Lip}_M(V))}$ and $\|\psi_0\|_V$ such that the Young equation
\begin{equation}
\label{eq:young-fixpoint}
\psi_t = \psi_0 +  \int_0^t X_{\dd s}(\psi_s),\qquad 0\le t< T.
\end{equation}
has a unique solution $\psi \in C^{1/2}([0,T];V)$. If we can take $T=+\infty$ we say that the solution is global.
\end{theorem}
\begin{proof}
Fix a finite $T>0$ and define standard Picard's iterations by
$$
\psi^{(n+1)}_t = \psi_0 + \int_0^t X_{\dd s}(\psi^{(n)}_s) ,\qquad 0 \le t \le T
$$
with $\psi^{(0)}_t = \psi_0$. 
Now
$$
\|\int_0^t X_{\dd s}(\psi^{(n)}_s) -X_{t}(\psi_0)\|_V\lesssim T^{\gamma+1/2} \|X\|_{C^{\gamma}([0,T],\mathrm {Lip}_M(V))} (1+\|\psi^{(n)}\|_{\cC^0 V})^M \|\psi^{(n)}\|_{C^{1/2}([0,T],V)}
$$
$$
\lesssim T^{\gamma} \|X\|_{C^{\gamma}([0,T],\mathrm{Lip}_M(V))} (1+\|\psi_0\|_V+T^{1/2}\|\psi^{(n)}\|_{C^{1/2}([0,T],V)})^{M+1} 
$$
and
$$
\|\psi^{(n+1)}\|_{\cC^{1/2}V} \lesssim \|X\|_{C^{\gamma}([0,T],\mathrm{Lip}_M(V))} T^{\gamma} (1+\|\psi_0\|_V+T^{1/2}\|\psi^{(n)}\|_{C^{1/2}([0,T],V)})^{M+1}
$$
which means that for sufficiently small $T$ (depending only on $\|\psi_0\|_V$) we can have $T^{1/2} \|\psi^{(n)}\|_{C^{1/2}([0,T],V)} \le 1$ for all $n\ge 0$. Moreover in this case
$
\|\psi^{(n+2)}-\psi^{(n+1)}\|_{C^{1/2}([0,T],V)} \lesssim_{\|\psi_0\|_V} \|X\|_{C^{\gamma}([0,T],\mathrm{Lip}_M(V))} T^{\gamma-1/2}\|\psi^{(n+1)}-\psi^{(n)}\|_{C^{1/2}([0,T],V)}
$
which for $\|X\|_{C^{\gamma}([0,T],\mathrm{Lip}_M(V))} T^{\gamma-1/2} \lesssim_{\|\psi_0\|_V}1/2$ implies that $(\psi^{(n)})_{n\ge 0}$ converges in $C^{1/2}([0,T],V)$ to a limit $\psi$. By continuity of the Young integral and of the operator $X$ this limit satisfies the equation~\eqref{eq:young-fixpoint}.
This solution exists at least until $t\le T$ where $T$ depends only on the norm of $X$ and $\|\psi_0\|_{V}$. Note that a posteriori $\psi$ belongs to $C^\gamma([0,T],V)$ and not only to $C^{1/2}([0,T],V)$. Uniqueness in $C^{1/2}([0,T],V)$ is now obvious.
\end{proof}
 Of course if $M=0$ it is easy to prove that the existence time $T$ of the local solution does not depend on $\|\phi_0\|_V$ and this implies the existence of solution on arbitrary intervals. In the general case we need further assumptions on the properties of $X$:
\begin{lemma}
\label{lemma:global}
Assume that the hypothesis of Thm.~\ref{thm:young-eq} hold and  that for all $\phi\in V$ such that $\|\phi\|_V\le A$  we have 
 $$|\|\phi+X_{s;t}(\phi)\|_V-\|\phi\|_V|\lesssim C_A |t-s|^{\rho}$$ 
 where $\rho > 1$, then for any $T>0$ and any local solution $\psi$ on $[0,T]$ of the Young equation~\eqref{eq:young-fixpoint} we have $\|\psi_t\|_V = \|\psi_0\|_V$ for all $t\in[0,T]$. This implies that there exists a unique global solution of the Young equation~\eqref{eq:young-fixpoint}.
\end{lemma}
\begin{proof}
Let $M_t = \|\psi_t\|_{V}$. By definition we have that
$$
\psi_t=\psi_s+\int_s^tX_{\dd\sigma}(\psi_\sigma)=\psi_s+X_{s,t}(\psi_s)+R_{s,t}
$$ 
with $R$ is the remainder term of the Young integral given in Thm.~\ref{th:young} and therefore satisfies $|R_{s,t}|\lesssim |t-s|^{2\gamma}$. An easy computation then gives 
$$
|M_t - M_s| = |\|\psi_s + X_{s,t}(\psi_s) + R_{s,t}\|_{V} -\|\psi_s\|_V|
$$
$$
\leq|\|\psi_s + X_{s,t}(\psi_s) \|_{V} -\|\psi_s\|_V|+|\|\psi_s + X_{s,t}(\psi_s) + R_{s,t}\|_{V} -\|\psi_s+X_{st}(\psi_s)\|_V|
$$
$$
\leq |\|\psi_s + X_{s,t}(\psi_s) \|_{V} -\|\psi_s\|_V|+\| R_{s,t}\|_{V}.$$
By our assumptions and by the Young estimates on $R$ we have
$$
|M_t - M_s| \lesssim_{\|\psi_0\|_V} |t-s|^\rho + |t-s|^{1/2+\gamma}.
$$
This relation implies that $M_t$ must be a constant function since  $\rho>1$ and $1/2+\gamma > 1$. Then  $M_t = M_0$ for all $t<T$. The conservation of the $V$ norm allows then to extend the local solution to an arbitrary interval and obtain a global solution satisfying the conservation law.
\end{proof}

\subsection{An Euler scheme for Young equations}
Young equations allow for a straightforward Euler approximation scheme. Let $\psi\in  C^{\gamma}([0,T],V)$ a local solution of the Young equation in $[0,T]$. For any $n\geq 0$ let $\psi_0^n=\psi_0\in V$ and define recursively 
$$
\psi_i^n=\psi_{i-1}^n+X_{\frac{i-1}{n};\frac{i}{n}}(\psi_{i-1}^n) \qquad i={1,...,\lfloor nT\rfloor }
$$
\begin{theorem}
For $n\geq0$ and $0\leq i\leq nT$ let $\Delta^n_i=\psi_i^n-\psi_{\frac{i}{n}}$ then 
$$
\max_{0\leq i<j\leq \lfloor nT\rfloor}\frac{|\Delta_j^n-\Delta_i^n|}{|i-j|^{\gamma}}=O(n^{1-2\gamma}).
$$
\end{theorem}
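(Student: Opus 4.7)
The plan is to imitate on the discrete grid the Young/sewing decomposition behind Theorem~\ref{th:young} and then to bootstrap the H\"older-type control on the error from a crude sup-norm bound. Using the additivity of the Young integral on each subinterval $[k/n,(k+1)/n]$ one writes
\begin{equation*}
\psi_{j/n}-\psi_{i/n}=\sum_{k=i}^{j-1}X_{k/n,(k+1)/n}(\psi_{k/n})+\sum_{k=i}^{j-1}J_{k/n,(k+1)/n}(X,\psi),
\end{equation*}
where, by Theorem~\ref{th:young} together with the a posteriori regularity $\psi\in\cC^\gamma V$ established in Section~\ref{sec:sol}, each continuous Young remainder $J_{k/n,(k+1)/n}(X,\psi)$ is of size $\lesssim n^{-2\gamma}$. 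Subtracting the defining identity of the Euler scheme and using $\sum_{k=i}^{j-1}X_{k/n,(k+1)/n}=X_{i/n,j/n}$ gives the error decomposition
\begin{equation*}
\Delta^n_j-\Delta^n_i=\bigl[X_{i/n,j/n}(\psi^n_i)-X_{i/n,j/n}(\psi_{i/n})\bigr]+E^n_{i,j}-\sum_{k=i}^{j-1}J_{k/n,(k+1)/n}(X,\psi)
\end{equation*}
with the \emph{discrete} Young remainder $E^n_{i,j}=\sum_{k=i}^{j-1}[X_{k/n,(k+1)/n}(\psi^n_k)-X_{k/n,(k+1)/n}(\psi^n_i)]$.

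Next I would estimate each piece separately. Hypothesis~\ref{hyp:X} together with the uniform boundedness of $\psi^n$ in $V$ yields $\|X_{i/n,j/n}(\psi^n_i)-X_{i/n,j/n}(\psi_{i/n})\|_V\lesssim ((j-i)/n)^\gamma\|\Delta^n_i\|_V$, while the sum of continuous Young remainders is trivially $\lesssim (j-i)n^{-2\gamma}$. For $E^n_{i,j}$, the same Lipschitz bound together with a uniform discrete H\"older estimate $\|\psi^n_k-\psi^n_i\|_V\lesssim ((k-i)/n)^\gamma$---obtained by transposing to the grid the fixed-point/sewing argument of Section~\ref{sec:sol} and patching the local estimates via the uniform sup-norm control on $\psi^n$---gives
\begin{equation*}
\|E^n_{i,j}\|_V\lesssim n^{-\gamma}\sum_{k=i}^{j-1}\|\psi^n_k-\psi^n_i\|_V\lesssim n^{-2\gamma}(j-i)^{\gamma+1}.
\end{equation*}

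Finally, specialising the combined inequality to $i=0$ (where the Lipschitz contribution vanishes because $\Delta^n_0=0$) produces the a priori bound $\|\Delta^n_j\|_V\lesssim T^{\gamma+1}n^{1-\gamma}+T^{2\gamma}$; reinserting this into the Lipschitz term for arbitrary $i,j$, dividing by $(j-i)^\gamma$, and using $(j-i)\le nT$ together with $n^{-\gamma}\le n^{1-2\gamma}$ (valid in the Young regime $\gamma\in(1/2,1]$), each of the three contributions turns out to be bounded by a constant depending polynomially on $T$ times $n^{1-2\gamma}$, which is the claim. The main obstacle is establishing the uniform H\"older estimate on the Euler iterates needed to bound $E^n_{i,j}$: although it mirrors the continuous argument of Section~\ref{sec:sol}, one must carefully track the polynomial growth factor $(1+\|\cdot\|_V)^M$ from Hypothesis~\ref{hyp:X} and combine local and global bounds to keep the constant uniform in $n$; once this is in place, the rest of the proof is a direct instance of the Young/sewing strategy used throughout this section.
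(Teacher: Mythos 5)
Your overall strategy --- compare the Euler sum with the Young integral over the same grid, isolate a Lipschitz term, a discrete remainder and the sum of continuous Young remainders, then bootstrap --- is essentially the strategy of the paper's proof, which performs the same comparison after telescoping the partition. There is first a small algebraic slip: subtracting the two identities as you set them up leaves behind the analogous discrete remainder for the \emph{exact} solution, $\tilde E^n_{i,j}=\sum_{k=i}^{j-1}\bigl[X_{k/n,(k+1)/n}(\psi_{k/n})-X_{k/n,(k+1)/n}(\psi_{i/n})\bigr]$, which is missing from your decomposition. This is harmless (it is bounded exactly like $E^n_{i,j}$ using $\psi\in\cC^\gamma V$), but it should appear.

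The genuine gap is in the inputs you feed into the estimate of $E^n_{i,j}$ and of the Lipschitz term, namely the uniform-in-$n$ boundedness of the iterates $\psi^n_k$ in $V$ (needed for the factor $(1+\|\cdot\|_V)^M$ of Hypothesis~\ref{hyp:X}) and the uniform discrete H\"older bound $\|\psi^n_k-\psi^n_i\|_V\lesssim((k-i)/n)^\gamma$. These are not free: they are essentially equivalent to a weak form of the conclusion, and the route you indicate for closing the loop does not close it. Indeed, your own a priori bound obtained at $i=0$ reads $\|\Delta^n_j\|_V\lesssim T^{\gamma+1}n^{1-\gamma}+T^{2\gamma}$, which \emph{grows} with $n$ for $\gamma<1$; it therefore cannot be reinserted to give uniform boundedness of $\psi^n_k=\psi_{k/n}+\Delta^n_k$, and without that the polynomial factor $(1+\|\psi^n_k\|_V)^M$ destroys every estimate in which it appears. (The growth comes from summing the term-by-term bound $n^{-2\gamma}(k-i)^\gamma$ over a macroscopic number $nT$ of steps; the term-by-term bound is lossy precisely where a sewing/induction argument is needed.) The paper breaks this circularity by a genuine induction on the number $l$ of grid points: it introduces $B^n_l=\max_{0\le i<j\le l}((j-i)/n)^{-1}|\Delta^n_j-\Delta^n_i-\cdots|$, shows $B^n_l\le C(1+B^n_{l-1})^M(B^n_{l-1}+n^{1-2\gamma})(l/n)^{2\gamma-1}$, and controls the recursion through the fixed point of the map $\theta(x)=(l/n)^{2\gamma-1}(1+x)^{M+1}$ for $l/n$ small, iterating over contiguous subintervals. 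This induction simultaneously delivers the boundedness of the iterates and the rate, and it is the step your proposal defers to ``transposing the fixed-point argument to the grid'' without carrying it out. Until that induction (or an equivalent discrete sewing argument with an induction on $j-i$) is supplied, the proof is not complete.
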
 
\begin{proof}
We remark that $\psi_j^n-\psi_i^n=\sum_{l=i}^{j-1}X_{\frac{l}{n};\frac{l+1}{n}}(\psi_l^n)$ and for $0\leq i<j\leq \lfloor nT\rfloor$ and define the partition of $[i/n,j/n]$ by $\pi^{j-i+1}=(t_k^n)_{i\leq k\leq j}$ with $t^n_k=\frac{k}{n}$. Let  $M_{ij}^{\pi^{j-i+1}}=\sum_{l=i}^{j-1}X_{t_l^n;t_{l+1}^n}(\psi_l^n)$ and consider the partition $\pi^{j-i}=\pi^{j-i+1}-\{t^n_k\}$ for $i<k<j$. Then  
$$
M^{\pi^{j-i+1}}_{ij}=M^{\pi^{j-i}}_{ij}+X_{t^n_k;t^n_{k+1}}(\psi_k^n)-X_{t_k^n;t_{k+1}^n}(\psi_{k-1}^n),
$$
and by induction we obtain immediately that 
$$
\psi_j^n-\psi_i^n=X_{t_i^n;t_j^n}(\psi_i^n)+\sum_{k=i+1}^{j-1}[X_{t^n_k;t^n_{k+1}}(\psi_k^n)-X_{t_k^n;t_{k+1}^n}(\psi_{k-1}^n)]
$$
for all $0\leq i<j\leq \lfloor nT\rfloor$. Let $p_{lk}^{n}=X_{t_l^n;t_k^n}(\psi_q^n)$ and $q_{lk}^n=X_{t_l^n;t_k^n}(\psi_{q/n})$. Using that $\psi$ satisfies the Young equation~\eqref{eq:young-fixpoint}
 we obtain  
$$
\psi_{j/n}-\psi_{i/n}=q_{ij}^n+\sum_{k=i+1}^{j-1}[X_{t^n_k;t^n_{k+1}}(\psi_{t^n_k})-X_{t^n_k;t^n_{k+1}}(\psi_{t^n_{k-1}})]+R_{ij}^n
$$
where 
$
R_{ij}^n=\sum_{k=i}^{j-1}\int_{t_k^n}^{t_{k+1}^n}X_{\dd\sigma}(\psi_{\sigma})-X_{t_k^n;t_{k+1}^n}(\psi_{t_k^n})
$.
For this term we have the  bound 
$
|R_{ij}^n|\lesssim_{|\psi|_{\gamma}+|\psi|_{0},||X||}(j-i)n^{-2\gamma}
$ by standard Young estimates.
Consider
$$
\Delta_j^n-\Delta_i^n=p_{ij}^n-q_{ij}-R_{ij}^n+\sum_{k=i+1}^{j-1}[X_{t^n_k;t^n_{k+1}}(\psi_k^n)-X_{t_k^n;t_{k+1}^n}(\psi_{k-1}^n)-X_{t^n_k;t^n_{k+1}}(\psi_{t^n_k})+X_{t^n_k;t^n_{k+1}}(\psi_{t^n_{k-1}})]
$$
and let 
$$
B_l^n=\max_{0\leq i<j\leq l}\left(\frac{j-i}{n}\right)^{-1}\left|\Delta_j^n-\Delta_i^n-p_{ij}^n-q_{ij}+R_{ij}^n\right|.
$$
To prove our result is suffices to show that $B_{{\lfloor} nT\rfloor}^n=O(n^{1-2\gamma})$. Observe that when $|i-j|<l$ the sum appearing in the expression of 
$ \Delta_i^n-\Delta_j^n $ can be bounded by $B_{l-1}^n$. In fact we have that 
$$
|X_{t^n_k;t^n_{k+1}}(\psi_k^n)-X_{t_k^n;t_{k+1}^n}(\psi_{k-1}^n)-X_{t^n_k;t^n_{k+1}}(\psi_{t^n_k})+X_{t^n_k;t^n_{k+1}}(\psi_{t^n_{k-1}})|\leq C(\frac{j-i}{n})^{2\gamma}(1+B_{l-1}^n)^{M}(n^{1-2\gamma}+B_{l-1}^n)
$$ 
where $C=C(\psi^0,||X||)$ and 
$$
B_l^n\leq C(1+B_{l-1}^n)^M(B_{l-1}^n+n^{1-2\gamma})(l/n)^{2\gamma-1}.
$$  
When $l=1$ we have that $B_1^n=0$ and the result is clearly true. Now assume that for some $l$ we have that $B_{l-1}^n\leq A$ and  define the increasing  map 
$\theta(x)=(l/n)^{2\gamma-1}(1+x)^{M+1}$. Remark that $\theta(n^{2\gamma-1}B_l^n)\leq n^{2\gamma-1}B_{l-1}^n$. Then if $l/n$ is small enough we have that  $\theta$ admits a  fixed point and that $n^{1-2\gamma}B_l^n\leq A<+\infty$ where we take $A$ is the limit  of the sequence $(x_i)$ defined by $x_{i+1}=\theta(x_i)$ and $x_0=0$. Now is suffice to iterate this argument to prove that the bound holds for all $l\leq\lfloor nT\rfloor$.  
\end{proof}
\section{Young theory of the modulated NLS}
\label{sec:reg}
Let $w$ a $(\rho,\gamma)$-irregular path, the aim of this section is to provide the necessary path--wise estimates on the modulated operator $X$ in various NLS models.

\begin{definition}
Let $n\ge 1$. We say that a $n$-linear operator $X:\mathbb R^+\to\mathcal L_n(V)$ on the Banach space $V$ belongs to $\mathcal{X}_{n,V}^w$ if 
\begin{enumerate}
\item For all $T>0$  we have
$$
|X_{s;t}|_{\mathcal L_n(V)}\leq C(1+\|\Phi^w\|_{\mathcal{W}^{\rho,\gamma}_T})|t-s|^{\gamma}
$$
for $s,t\in[0,T]$ and for some finite constant $C>0$ which does not depend on $w$.
\item If we let $X^L_{t}(\varphi_1,\dots,\varphi_n)= \Pi_L X_{t}(\Pi_L\varphi_1,\dots,\Pi_L\varphi_n)$ then we have
$X^L \to X$ in $ C^{1/2}([0,T],\mathcal L_n(V))$.
\end{enumerate}
And then if we define $X_{t}(\psi):=X_{t}(\psi,\psi,. . .,\psi)$ we see that in this case our operator $X$ satisfy the Hypothesis~\eqref{hyp:X}, that is $X\in C^{\gamma}([0,T]; \mathrm{Lip}_M(V))$ for $M=n-1$ and all $T>0$.    
\end{definition}

Once appropriate bounds are obtained for the relevant $X$ operators, the Young theory of  Section~\ref{sec:sol} gives a complete local well-posedness theory for the equation. Additional benefits of this approach are the convergence of approximations and an Euler scheme to approximate solutions.

\subsection{Periodic cubic NLS equation}

We consider the modulated periodic cubic Nonlinear Schr\"odinger equation for which  $A=i\partial^2$ and $\mathcal N(\phi)=i |\phi|^2 \phi$ in eq.~\eqref{eq:abs}.
 By definition $\dot X$ is the time--dependent trilinear operator given by 
$$
\dot X_t(\psi_1,\psi_2,\psi_3) = U^w_{-t}[ (U_{t}^w \psi_1)^* (U_{t}^w \psi_2) (U_{t}^w \psi_3)]
$$
for all $t\in \RR$ and $\psi_1,\psi_2,\psi_3 \in L^2(\TT)$.
Its Fourier transform reads 
\begin{equation*}
\cF \dot X_{t}(\psi_1, \psi_2, \psi_3) (\xi) = \sum_{\substack{\xi_1,\xi_2,\xi_3 \in \ZZ_0\\ \xi=-\xi_1+\xi_2+\xi_3}}  e^{i w_t (\xi^2+\xi_1^2-\xi_2^2-\xi_3^2)} \, \hat \psi_1(\xi_1)^* \hat \psi_2 (\xi_2) \hat \psi_3(\xi_3) 
\end{equation*}
where $\hat \psi_i = \cF \psi_i$. Note that $\xi^2+\xi_1^2-\xi_2^2-\xi_3^2 =2 (\xi -\xi_2)(\xi - \xi_3)$ under the condition that $\xi=-\xi_1+\xi_2+\xi_3$. Then  $X_t = \int_0^t \dot X_s ds$ has Fourier transform 
\begin{equation}\label{eq:NLS-X}
\hat X_{t}(\psi_1,\psi_2,\psi_3)=\sum_{\star} \psi_1(k_1)^{*}\psi_2(k_2)\psi_3(k_3)\Phi^{w}_{t}(2(k-k_2)(k-k_3))
\end{equation}
where the star under the sum means that we sum over all triples $(k_1,k_2,k_3) \in \ZZ^3$ such that $k_2+k_3=k+k_1$ and $k_2\ne k$, $k_3\ne k$, $k_1k_2k_3\ne0$. 
The operator $X$ is well defined for all $\psi_1,\psi_2,\psi_3 \in L^2(\TT)$ with values in the space of Schwartz distributions on $\TT$. Exploiting the $\rho$-irregularity of $w$ the following proposition determines a better regularity for $X$.
A resonance due to the particular form of the non-linear term and to the fact we are working on the torus appears in the operator $X$. This happens when $\xi=\xi_2$ or $\xi=\xi_3$. In this case indeed we have that the modulation function vanishes: $\xi^2+\xi_1^2-\xi_2^2-\xi_3^2 =0$ and no regularisation can take place. In order to take it into account we decompose the operator $X$ according to $X=X^1+X^2$ where
$$
\dot X^1_t (\psi_1, \psi_2, \psi_3)= \psi_3\langle\psi_1,\psi_2\rangle+\psi_2\langle\psi_1,\psi_3\rangle
$$
and
\begin{equation*}
\cF \dot X^2_{t}(\psi_1, \psi_2, \psi_3) (\xi) = \sum_{\substack{\xi_1,\xi_2,\xi_3 \in \ZZ_0\\ \xi=-\xi_1+\xi_2+\xi_3}}   \II_{\xi\neq \xi_2,\xi_3} e^{i w_t (\xi^2+\xi_1^2-\xi_2^2-\xi_3^2)} \, \hat \psi_1(\xi_1)^* \hat \psi_2 (\xi_2) \hat \psi_3(\xi_3) .
\end{equation*}

Then we have

\begin{proposition}
\label{th:reg-1d}
If $\rho>1/2$ then for all $T>0$ and $\alpha\geq 0$ we have $X\in  C^{\gamma}([0,T],\mathcal L_3(H^{\alpha}))$. Moreover  $X=X^1+X^2$ with
$$
X^1_{t}(\psi_1,\psi_2,\psi_3)=t (\psi_2\langle\psi_1,\psi_2\rangle+\psi_3\langle\psi_1,\psi_3\rangle)
$$
and $X^2\in\mathcal{X}_{3,H^\alpha(\mathbb T)}^w$ for all $\alpha>0$. Finally $\sup_{L}||(X^2)^L||_{ C^\gamma([0,T],\mathcal L_3(L^2(\mathbb T)))}<+\infty$.
\end{proposition}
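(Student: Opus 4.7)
The plan is to use Fourier expansion on the torus to expose the oscillatory phase induced by the modulation, split off the resonant contribution explicitly as $X^1$, and bound the oscillatory remainder $X^2 := X - X^1$ by combining the $(\rho,\gamma)$-irregularity of $w$ with a combinatorial summation argument that becomes critical at $\rho = 1/2$.

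First I would write $U^w_s$ as the Fourier multiplier $\hat\phi(k) \mapsto e^{-ik^2 w_s}\hat\phi(k)$ and expand $X_{s,t}(\psi_1,\psi_2,\psi_3)$ as a convolution sum over $(k_1,k_2,k_3)$ with $k_1-k_2+k_3 = k$, each summand carrying the phase factor $\Phi^w_{s,t}(k^2-k_1^2+k_2^2-k_3^2)$. The key algebraic identity is $k^2-k_1^2+k_2^2-k_3^2 = 2(k-k_1)(k-k_3)$ on this constraint set, which exposes the product structure of the frequency exponent. The resonant set $\{(k-k_1)(k-k_3)=0\}$ corresponds to $k_1 = k$ or $k_3 = k$, where $\Phi^w_{s,t}(0) = t-s$; collecting these diagonal contributions produces the explicit $X^1$ of the statement, and one sets $X^2 := X - X^1$.

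To estimate the oscillatory remainder $X^2$, I use the $(\rho,\gamma)$-irregularity bound $|\Phi^w_{s,t}(\,\cdot\,)| \le \|\Phi^w\|_{\cw^{\rho,\gamma}_T} |t-s|^\gamma (1+|(k-k_1)(k-k_3)|)^{-\rho}$. Cauchy--Schwarz on the convolution at fixed $k$ reduces the analysis to the combinatorial sum $S := \sum_{n,m\in\ZZ\setminus\{0\}}(1+|nm|)^{-2\rho}$, obtained by the change of variables $n = k-k_1$, $m = k-k_3$; this sum is finite and independent of $k$ precisely when $\rho > 1/2$. Multiplying by $\langle k\rangle^{2\alpha}$, distributing the weight via $\langle k\rangle^{2\alpha}\lesssim \langle k_1\rangle^{2\alpha}+\langle k_2\rangle^{2\alpha}+\langle k_3\rangle^{2\alpha}$ (valid for $\alpha\ge 0$ because $k = k_1-k_2+k_3$), and collapsing the constraint via the embedding $\|\psi\|_{L^2}\le\|\psi\|_{H^\alpha}$ yields $\|X^2_{s,t}\|_{\cL^3 H^\alpha}\lesssim |t-s|^\gamma$ for every $\alpha \ge 0$. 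For the Galerkin convergence $(X^2)^L \to X^2$ in $\cC^\gamma \cL^3 H^\alpha$ with $\alpha > 0$, the same computation applied to the tail $\max_i |k_i| > L$ yields an extra factor of order $\langle L\rangle^{-2\alpha}$ via dominated convergence on the weighted sum; at $\alpha = 0$ this decay is unavailable and the argument performed with cutoffs in place delivers only the asserted uniform bound on $(X^2)^L$ in $\cC^\gamma \cL^3 L^2$.

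The main obstacle is the borderline combinatorial sum $S$: its convergence at $\rho > 1/2$ is what fixes the threshold in the proposition, and the product structure $(k-k_1)(k-k_3)$ revealed by the algebraic identity is essential. Any cruder bookkeeping, for instance bounding the two factors $k-k_1$ and $k-k_3$ separately, would inflate the requirement to $\rho > 1$ and miss the sharp regime allowed by the hypothesis. The rest of the argument is a routine combination of Cauchy--Schwarz, triangle inequality in Sobolev weights, and dominated convergence on the cutoff sum.
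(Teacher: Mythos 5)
Your main estimate follows the paper's own route essentially verbatim: Fourier expansion of the trilinear operator, the factorization $k^2-k_1^2+k_2^2-k_3^2=2(k-k_1)(k-k_3)$ on the convolution constraint, extraction of the resonant diagonal $\{k_1=k\}\cup\{k_3=k\}$ (where $\Phi^w_{s,t}(0)=t-s$) as $X^1$, and a Cauchy--Schwarz bound in which the $(\rho,\gamma)$-irregularity converts the oscillatory factor into $|t-s|^{2\gamma}(1+|\Xi|)^{-2\rho}$ while the weight $\langle k\rangle^{2\alpha}$ is distributed over the input frequencies via $\langle k\rangle^{2\alpha}\lesssim\sum_i\langle k_i\rangle^{2\alpha}$. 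That part is correct and gives $\|X^2_{s,t}\|_{\mathcal{L}^3H^\alpha}\lesssim|t-s|^\gamma$ for all $\alpha\ge0$ exactly as in the paper. The gap is in the Galerkin statement $X^2\in\mathcal{X}^w_{3,H^\alpha}$ for $\alpha>0$, i.e.\ the convergence $(X^2)^L\to X^2$ in $\cC^{1/2}([0,T],\mathcal{L}^3H^\alpha)$. Your claim that ``the same computation applied to the tail $\max_i|k_i|>L$ yields an extra factor of order $\langle L\rangle^{-2\alpha}$'' fails for precisely the terms produced by the weight distribution in which the transferred weight lands on the frequency constrained to be large: there $\langle k_1\rangle^{2\alpha}$ cancels $\langle k_1\rangle^{-2\alpha}$ and nothing is left to extract decay in $L$ from; on the piece $|k|>L$ no input frequency need be large at all; and a dominated-convergence argument does not control the $\sup_k$ uniformly. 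The paper's proof handles this by splitting according to which frequency dominates and re-borrowing a small power from the irregularity weights via $|k|\lesssim|k_1|+|k-k_2|+|k-k_3|$ (and $|k-k_2|=|k_3-k_1|$ for one of the regions), which costs an $\eps<2\rho-1$ of summability and produces the rate $L^{-\min(2\alpha,\eps)}$ rather than $L^{-2\alpha}$. Some version of that argument is needed; as written, the convergence claim is unproved.

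A minor correction to your closing remark: bounding the two factors $k-k_1$ and $k-k_3$ separately does \emph{not} inflate the requirement to $\rho>1$. On the non-resonant set both factors are nonzero integers, so $(1+|nm|)^{-2\rho}\le|n|^{-2\rho}|m|^{-2\rho}$ exactly, the double sum factorizes into the product of two one-dimensional sums, and each converges for $\rho>1/2$; this factorized form is in fact how the paper evaluates its constant $I_{0,\rho}$. The threshold $\rho>1/2$ survives this ``cruder'' bookkeeping because the two variables are summed independently once $k$ is fixed.
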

\begin{proof}
Using the Cauchy-Schwartz inequality and the fact that $||\psi||_{L^2}\leq||\psi||_{H^\alpha}$ for $\alpha\geq0$ we obtain immediately the following bound
$$
||X^1_{s;t}(\psi_1,\psi_2,\psi_3)||_{H^\alpha}\leq 2(t-s)||\psi_1||_{\alpha}||\psi_2||_{\alpha}||\psi_3||_{\alpha}
$$
which shows that $X^1 \in  C^{\gamma}([0,T],\mathcal L_3(H^{\alpha}))$ for any $\gamma\le 1$ and $\alpha \ge 0$. 
Let us consider now $X^2$: setting $\Xi = 2 (\xi -\xi_2)(\xi - \xi_3)$ we get
\begin{equation*}
|\langle \psi, X^{2}_{s;t}(\psi_1,\psi_2,\psi_3)\rangle| \le\sum_{\substack{\xi_1,\xi_2,\xi_3 \in\ZZ_0\\ \xi=-\xi_1+\xi_2+\xi_3}} \II_{\xi\neq \xi_2,\xi_3} |\Phi^w_{s;t}(\Xi)| \,|\hat \psi(\xi)^*  \hat \psi_1(\xi_1)^* \hat \psi_2 (\xi_2) \hat \psi_3(\xi_3)|
\end{equation*}
for all $\psi,\psi_1,\psi_2,\psi_3 \in L^2(\TT)$ and all $0<s<t<T$.
By standard application of Cauchy-Schwarz and using the $(\rho,\gamma)$--irregularity norm of $w$ we get
\begin{equation*}
\begin{split}
|\langle \psi, X^{2}_{s;t}(\psi_1,\psi_2,\psi_3)\rangle| & \le (I_{\alpha,\rho})^{1/2} \left( \sup_{a\in\mathbb Z_0}|a|^{\rho}|\Phi^w_{s;t}(a)|\right)  \|\psi\|_{-\alpha} \|\psi_1\|_\alpha \|\psi_2\|_\alpha \|\psi_3\|_\alpha \\
& \le (I_{\alpha,\rho})^{1/2}\|\Phi^w\|_{\mathcal{W}^{\rho,\gamma}_T}  |t-s|^\gamma  \|\psi\|_{-\alpha} \|\psi_1\|_\alpha \|\psi_2\|_\alpha \|\psi_3\|_\alpha
\end{split}
\end{equation*}
with
$$
I_{\alpha,\rho} =
\sup_{\xi\in\ZZ_0}  \sum_{\substack{\xi_1,\xi_2,\xi_3 \in\ZZ_0\\ \xi=-\xi_1+\xi_2+\xi_3}} \II_{\xi\neq \xi_2,\xi_3}  |\xi|^{2\alpha} |\xi_1 \xi_2 \xi_3|^{-2\alpha} |\Xi|^{-2\rho}  .
$$
The finiteness of the constant $I_{\alpha,\beta,\rho}$ is enough to show that $X^2 \in  C^{\gamma}([0,T],\mathcal L_3(H^{\alpha}))$.
Since $\alpha \ge 0$, by using that $ |\xi|^{2\alpha} \lesssim  |\xi_1|^{2\alpha}+|\xi_2|^{2\alpha}+|\xi_3|^{2\alpha}$ we have
$
I_{\alpha,\rho} \lesssim I_{0,\rho}
$
moreover
\begin{equation*}
I_{0,\rho}= \sup_{\xi\in\ZZ_0} \sum_{\xi_2\in\ZZ_0} \II_{\xi\neq \xi_2}  |\xi -\xi_2|^{-2\rho}  \sum_{\xi_3\in\ZZ_0} \II_{\xi\neq \xi_3} |\xi - \xi_3|^{-2\rho} < \infty
\end{equation*}
 provided $\rho > 1/2$. 
Let us now prove that $X^2\in\mathcal X^{w}_{3,H^{\alpha}}$ for $\alpha>0$. Another simple computation shows 
$$
||(X^2)^L_{s;t}-X^2_{s;t}||_{\mathcal L^3H^\alpha} \lesssim (I^L)^{1/2}\|\Phi^w\|_{\mathcal{W}^{\rho,\gamma}_T}  |t-s|^\gamma 
$$
where
\begin{equation*}
\begin{split}
I^L = &\sup_{|\xi|> L}|\xi|^{2\alpha}\sum_{-\xi_1+\xi_2+\xi_3=\xi}|\xi_1 \xi_2 \xi_3|^{-2\alpha}|(\xi-\xi_3)(\xi-\xi_2)|^{-2\rho}
\\&+\sup_{|\xi|\leq L}|\xi|^{2\alpha}\sum_{-\xi_1+\xi_2+\xi_3=\xi,|\xi_1|>L}|\xi_1 \xi_2 \xi_3|^{-2\alpha}|(\xi-\xi_3)(\xi-\xi_2)|^{-2\rho}
\\&+\sup_{|\xi|\leq L}|\xi|^{2\alpha}\sum_{-\xi_1+\xi_2+\xi_3=\xi,|\xi_2|>L}|\xi_1 \xi_2 \xi_3|^{-2\alpha}|(\xi-\xi_3)(\xi-\xi_2)|^{-2\rho}
\\&\equiv I^L_1+I^L_2+I^L_3
\end{split}
\end{equation*}
Now 
\begin{equation*}
\begin{split}
I^L_1\lesssim& \sup_{|\xi|>L}\sum_{\star}|\xi_2 \xi_3|^{-2\alpha}|\xi-\xi_2|^{-2\rho}|\xi-\xi_3|^{-2\rho}+\sup_{|\xi|>L}\sum_{\star}|\xi_1|^{-2\alpha}|\xi_2|^{-2\alpha}|\xi-\xi_2|^{-2\rho}|\xi-\xi_3|^{-2\rho}
\\&\lesssim L^{-\min(2\alpha,2\eps)}
\end{split}
\end{equation*}
for $0<\eps<2\rho-1$ and where the star under the sum mean that $-\xi_1+\xi_2+\xi_3=\xi$. Here  we have used the inequality $|\xi|\lesssim |\xi_2|+|\xi-\xi_2|$ to bound the first sum and for the second sum we have used the fact that $|\xi|\lesssim |\xi_1|+|\xi-\xi_2|+|\xi-\xi_3|$, which gives the needed bound for $I^L_1$. To deal with $I^L_2$ we remark that $|\xi-\xi_2|=|\xi_3-\xi_1|$ and then 
$$
I^L_2\lesssim L^{-2\alpha}\sum_{\xi_2,\xi_3}|\xi_1-\xi_2|^{-2\rho}|\xi_3-\xi_1|^{-2\rho}+L^{-\min(\alpha,\eps)}\sum_{\xi_2,\xi_3}|\xi_3-\xi_1|^{\rho-\eps}|\xi_2-\xi_1|^{\rho}\lesssim L^{-\min(2\alpha,\eps)}.
$$
To obtain the last bound for $I^L_3$ we remark that if $|\xi_2|>L$ the $\max(|\xi_1-\xi_2|;|\xi_1|)\geq L/2$ and then we obtain easily that $I^L_3\lesssim L^{-\min(2\alpha,\eps)}$ which completes the proof.
\end{proof}

\subsection{Cubic NLS equation on $\RR$}
In the case of the cubic NLS on the whole real line the modulated operator $\dot X$ takes the form
\begin{equation*}
\cF \dot X_{t}(\psi_1, \psi_2, \psi_3) (\xi) = \int_{\substack{\xi_1,\xi_2,\xi_3 \in\RR\\ \xi=-\xi_1+\xi_2+\xi_3}} e^{i w_t (\xi^2+\xi_1^2-\xi_2^2-\xi_3^2)} \, \hat \psi_1(\xi_1)^* \hat \psi_2 (\xi_2) \hat \psi_3(\xi_3) \dd \xi_2 \dd \xi_3 .
\end{equation*}
for any $\psi_1, \psi_2, \psi_3 \in L^2(\RR)$. Note that the modulation function  $\Xi= \xi^2+\xi_1^2-\xi_2^2-\xi_3^2$ under the condition  $\xi=-\xi_1+\xi_2+\xi_3$ takes the form $\Xi=2 (\xi -\xi_2)(\xi - \xi_3)$. Then $X$ has the expression
\begin{equation*}
\cF  X_{t}(\psi_1, \psi_2, \psi_3) (\xi) = \int_{\substack{\xi_1,\xi_2,\xi_3 \in\RR^d\\ \xi=-\xi_1+\xi_2+\xi_3}} \Phi^{w}_{t}(\Xi)\, \hat \psi_1(\xi_1)^* \hat \psi_2 (\xi_2) \hat \psi_3(\xi_3) \dd \xi_2 \dd \xi_3 .
\end{equation*}

\begin{proposition}
Assume that  $w$ is  $(\rho,\gamma)$-irregular with $\rho>1/2$ then the modulated operator $X$ associated to the  cubic NLS on $\RR$ satisfies  $X\in C^{\gamma}([0,T],H^\alpha(\RR))$ for all $\alpha\geq0$.
\end{proposition}
\begin{proof}
Let $(\psi_i)_{i=1,2,3}\in H^\alpha$ and $\psi_4\in H^{-\alpha}$ then by a simple computation we have that 
\begin{equation}
\begin{split}
 |\langle\psi_4,X_{s; t}(\psi_1,\psi_2,\psi_3)\rangle|\leq  \int_{\mathbb R^3}&(|x|^{-\alpha}|\hat\psi_4(x)|)|x|^{\alpha}|x_1x_2x_3|^{-4\alpha}|-x_1+x_2+x_3|^{\alpha}\times
\\ & \times \Pi_{i=1,.,3}|x_i|^{\alpha}|\hat\psi_i(x_i)| |\Phi^{w}_{s; t}(2(x_2-x_1)(x_3-x_1))| \dd x_1\dd x_2 \dd x_3
\end{split} \end{equation}
with $x=-x_1+x_2+x_3$ now using the fact that $|x|^{\alpha}\lesssim|x_1|^{\alpha}+|x_2|^{\alpha}+|x_3|^{\alpha}$ and  the Lemma~\ref{lemma=Sch-estimates} below we obtain immediately 
$
|\langle\psi_4,X_{s; t}(\psi_1,\psi_2,\psi_3)\rangle|\lesssim (1+\|\Phi^w\|_{\mathcal{W}^{\rho,\gamma}_T} ) |t-s|^{\gamma}\|\psi_4\|_{-\alpha}\Pi_{i=1,..,3}\|\psi\|_{\alpha}
$.
\end{proof}

\begin{lemma}\label{lemma=Sch-estimates}
Let $(\psi_i)_{i=1,..,4}\in L^2(\mathbb R)$ and define the following integral 
\begin{equation*}
\begin{split}
\mathcal I(\alpha):=\int_{\mathbb R^3}\dd x_1\dd x_2\dd x_3|x_2-x_1|^{\alpha}|\Phi^{w}_{s_1;s_2}(2(x_2-x_1)(x_3-x_1))|
\\ \times|\hat\psi_{1}(x_1)||\hat\psi^*_{2}(x_2)||(\hat\psi_{3})^*(x_3)||(\hat\psi^*_{4})^*(-x_1+x_2+x_3)|
\end{split}
\end{equation*}
then if $w$ is $(\rho,\gamma)$--irregular we have the following bound 
$$
\mathcal I(\alpha)\lesssim (1+\|\Phi^w\|_{\mathcal{W}^{\rho,\gamma}_T} ) |s_2-s_1|^{\gamma}\Pi_{i=1,..4}\|\psi_i\|_{L^2(\mathbb R)}
$$
when $\alpha\in[0,1)$ and $\rho>1/2+\alpha$ or $\alpha=1$ and $\rho>1$.
\end{lemma}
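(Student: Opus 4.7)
The plan is to combine Hypothesis~\ref{hyp:main} with two successive changes of variables and two Cauchy--Schwarz inequalities; the sharp exponent condition $\rho>1/2+\alpha$ will emerge from a one--variable resonance estimate at the end.

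First, I would use the $\rho$-irregularity to replace $|\Phi^w_{s_1s_2}(\Xi)|$ by $\|\Phi^w\|_{\mathcal W^{\rho,\gamma}_T}|s_2-s_1|^\gamma (1+|\Xi|)^{-\rho}$ with $\Xi=2(x_2-x_1)(x_3-x_1)$, which detaches the time factor. Writing $f_i=|\hat\psi_i|$ and changing variables $u=x_2-x_1$, $v=x_3-x_1$ (so that $-x_1+x_2+x_3=x_1+u+v$ and $\Xi=2uv$) reduces $\mathcal I(\alpha)$, up to the constant $|s_2-s_1|^\gamma\|\Phi^w\|_{\mathcal W^{\rho,\gamma}_T}$, to
\[
\int du\,dv\,dx_1\; |u|^\alpha (1+|2uv|)^{-\rho}\, f_1(x_1)f_2(x_1+u)f_3(x_1+v)f_4(x_1+u+v).
\]
Cauchy--Schwarz in $x_1$, pairing $(f_1,f_4)$ against $(f_2,f_3)$, bounds the $x_1$-slice by $E(u+v)^{1/2} G(v-u)^{1/2}$, where $E(w):=\int f_1^2 f_4(\cdot+w)^2\,dx$ and $G(w):=\int f_2^2 f_3(\cdot+w)^2\,dx$ are $L^1$ functions with $\|E\|_{L^1}=\|\psi_1\|_2^2\|\psi_4\|_2^2$ and $\|G\|_{L^1}=\|\psi_2\|_2^2\|\psi_3\|_2^2$.

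Next, change variables $a=u+v$, $b=v-u$ (Jacobian $1/2$), so that $|u|=|a-b|/2$ and $2uv=(a^2-b^2)/2$. The remaining estimate becomes
\[
\mathcal J := \int da\,db\; |a-b|^\alpha\,(1+|a^2-b^2|/2)^{-\rho}\,E(a)^{1/2}\,G(b)^{1/2}\;\lesssim\;\prod_{i=1}^4\|\psi_i\|_{L^2}.
\]
I would attack $\mathcal J$ by a Cauchy--Schwarz in $(a,b)$ that splits the modulation weight symmetrically as $(1+|a^2-b^2|/2)^{-\rho/2}\cdot (1+|a^2-b^2|/2)^{-\rho/2}$ and splits $|a-b|^\alpha$ as $|a-b|^\beta\cdot|a-b|^{\alpha-\beta}$, pairing the first half with $E$ and the second with $G$. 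This yields
\[
\mathcal J^2 \le \Bigl(\int\! |a-b|^{2\beta}(1+|a^2-b^2|/2)^{-\rho} E(a)\,da\,db\Bigr)\Bigl(\int\! |a-b|^{2(\alpha-\beta)}(1+|a^2-b^2|/2)^{-\rho} G(b)\,da\,db\Bigr),
\]
and Fubini reduces each factor to the one--variable supremum estimate
\[
H_\delta(a):=\int_{\mathbb R} |a-b|^{2\delta}(1+|a^2-b^2|/2)^{-\rho}\,db,\qquad \sup_{a}\,H_\delta(a)<+\infty,
\]
for $\delta\in\{\beta,\alpha-\beta\}$; once this holds the factor is controlled by $\|E\|_{L^1}$ (resp.\ $\|G\|_{L^1}$).

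The main obstacle is this pointwise estimate. Setting $c=b-a$ gives $|a^2-b^2|=|c||c+2a|$, and the integrand of $H_\delta$ concentrates near the two resonances $c=0$ (i.e.\ $b=a$) and $c=-2a$ (i.e.\ $b=-a$), precisely where the modulation weight fails to decay. At $c=0$ the vanishing factor $|c|^{2\delta}$ must do all the work: the rescaling $w=|a|\,|c|$ converts the local contribution to $|a|^{-2\delta-1}\int|w|^{2\delta}(1+|w|)^{-\rho}\,dw$, which is finite exactly when $\rho>\delta+1/2$. At $c=-2a$ one has instead $|c|^{2\delta}\approx(2|a|)^{2\delta}$; rescaling $w=|a|\,(c+2a)$ gives a local contribution $\sim |a|^{2\delta-1}\int_{|w|\le c|a|^2}(1+|w|)^{-\rho}\,dw$, which stays bounded in $|a|$ under the same threshold $\rho>\delta+1/2$ (the endpoint $\delta=1/2$ being the critical one, where the $w$-integral demands $\rho>1$). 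Choosing the split $\beta=\alpha$, $\alpha-\beta=0$ at one extreme and $\beta=0$, $\alpha-\beta=\alpha$ at the other (and interpolating through the symmetric split $\beta=\alpha/2$) one sees that the joint condition $\rho>1/2+\alpha$ is what guarantees $\sup_a H_\beta,\sup_b \tilde H_{\alpha-\beta}<\infty$ for an admissible $\beta$; the endpoint $\alpha=1$ is special because only the symmetric split $\beta=1/2$ is available and the corresponding $w$-integral degenerates, which fixes the weaker sufficient condition $\rho>1$. The entire technical content of the lemma sits in carrying out this resonance analysis cleanly while keeping the $L^2$ pairing with the $\psi_i$.
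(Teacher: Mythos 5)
Your overall architecture is sound and genuinely different from the paper's. The paper changes variables to $y_2=x_2-x_1$, $y_3=x_3-x_1$, splits $\RR^3$ into the four regions determined by $|y_2|\gtrless 1$, $|y_3|\gtrless 1$, and on three of them applies Cauchy--Schwarz with the \emph{squared} weight $(1+|2y_2y_3|)^{-2\rho}$ under a supremum over $x_1$; the fourth region ($|y_2|\ge 1$, $|y_3|\le 1$) and the endpoint $\alpha=1$ each get separate ad hoc arguments, the latter following Debussche--Tsutsumi. You instead perform one Cauchy--Schwarz in $x_1$ to produce the autocorrelations $E$, $G$, rotate $(u,v)\mapsto(a,b)=(u+v,v-u)$ so that the phase becomes $(a^2-b^2)/2$ while $E$ depends only on $a$ and $G$ only on $b$, and reduce everything to the one-variable bound $\sup_a H_\delta(a)<\infty$. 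This is more unified (no four-fold splitting, no separate treatment of $\alpha=1$), and with the symmetric split $\beta=\alpha/2$ it in fact proves the lemma under the weaker hypothesis $\rho>\alpha/2+1/2$ for $\alpha<1$ --- so, carried out correctly, your argument is strictly stronger than the paper's.

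However, your verification of the key estimate $\sup_a H_\delta<\infty$ is wrong as written, in two places. First, the condition $\rho>\delta+1/2$ does not come from the resonance at $c=0$: there the cut-off integral $|a|^{-2\delta-1}\int_{|w|\lesssim |a|}|w|^{2\delta}(1+|w|)^{-\rho}\,\dd w$ is bounded in $a$ for essentially any $\rho>0$, and your claim that $\int_{\RR}|w|^{2\delta}(1+|w|)^{-\rho}\dd w<\infty$ exactly when $\rho>\delta+1/2$ is false (the correct threshold is $\rho>2\delta+1$). The constraint $\rho>\delta+1/2$ actually arises from the non-resonant tail $|c|\to\infty$, where $|a^2-b^2|\sim c^2$ and the integrand behaves like $|c|^{2\delta-2\rho}$ --- a region you never examine. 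Second, at the resonance $c=-2a$ the contribution is of order $|a|^{2\delta-1}$ when $\rho>1$ and $|a|^{2\delta+1-2\rho}$ when $\rho<1$, so boundedness forces $\delta\le 1/2$ no matter how large $\rho$ is; consequently your ``extreme split'' $\beta=\alpha$, $\alpha-\beta=0$ fails for every $\alpha>1/2$, and the interpolation discussion should be replaced by the single symmetric choice $\beta=\alpha/2\le 1/2$, which works throughout (with $\rho>1$ needed at $\delta=1/2$, recovering the endpoint $\alpha=1$). Once the three regions ($c$ near $0$, $c$ near $-2a$, and $|c|$ large) are treated correctly, your proof closes.
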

\begin{proof}
 In the case $\alpha<1$ let us split $\mathbb R^{3}=\cup_{i=1,...,4}D_i$ with
$$
D_1=\{(x_1,x_2,x_3)\in\mathbb R^3 ;|x_2-x_1|\geq1,|x_3-x_1|\geq1\},
$$
$$
D_2=\{(x_1,x_2,x_3)\in\mathbb R^3 ;|x_2-x_1|\leq1,|x_3-x_1|\leq1\},
$$
 $$
 D_3=\{(x_1,x_2,x_3)\in\mathbb R^3 ;|x_2-x_1|\leq1,|x_3-x_1|\geq1\},
 $$
 $$
  D_4=\{(x_1,x_2,x_3)\in\mathbb R^3 ;|x_2-x_1|\geq1,|x_3-x_1|\leq1\}.$$
According to this split
 $
 \mathcal I(\alpha)=\sum_{i=1,...,4}I_i
$.
By Cauchy-Schwarz we have $I_l\leq J_l\Pi_{i=1}^{4}\|\psi_{i}\|_{L^2(\mathbb R)}$  for $l\in\{1,2,3\}$ and using the $(\rho,\gamma)$--irregularity of $w$ we have 
\begin{equation*}
\begin{split}
J_1^2&=\sup_{x_1}\int_{\mathbb R^2}\dd x_2\dd x_3 \mathbb I_{\{|x_2-x_1|\geq1;|x_3-x_1|\geq1\}}|x_2-x_1|^{2\alpha}|\Phi^{w}_{s_1; s_2}(2(x_2-x_1)(x_3-x_1))|^2
\\&=\int_{\mathbb R^2}\dd y_2\dd y_3 \mathbb I_{\{|y_2|\geq1;|y_3|\geq1\}}|y_2|^{2\alpha}|\Phi^{w}_{s_1; s_2}(2y_2y_3)|^2
\\&\lesssim \|\Phi^w\|_{\mathcal{W}^{\rho,\gamma}_T}^2 |s_2-s_1|^{2\gamma}(\int_{|y_2|\geq1}\frac{1}{|y_2|^{2\rho-2\alpha}}\dd y_2)(\int_{|y_3|\geq1}\frac{1}{|y_3|^{2\rho}}\dd y_3)<+\infty  
\end{split}
\end{equation*}
when $\rho>\alpha+1/2$.  To bound the term $J_3$ we use again the $(\rho,\gamma)$ irregularity of $w$ and we obtain
\begin{equation*}
\begin{split}
J_3^2&=\sup_{x_1}\int_{\mathbb R^2}\dd x_2\dd x_3 \mathbb I_{\{|x_2-x_1|\leq1;|x_3-x_1|\geq1\}}|x_2-x_1|^{2\alpha}|\Phi^{w}_{s_1; T}(2(x_2-x_1)(x_3-x_1))|^2
\\&=\int_{\mathbb R^2}\dd y_2\dd y_3 \mathbb I_{\{|y_2|\leq1;|y_3|\geq1\}}|y_2|^{2\alpha}|\Phi^{w}_{s_1; s_2}(2y_2y_3)|^2
\\&\lesssim\|\Phi^w\|_{\mathcal{W}^{\rho,\gamma}_T} |s_2-s_1|^{2\gamma}\int_{|y_2|\leq1}|y_2|^{2\alpha}(\int_{|y_3|\geq1}\frac{1}{(1+|y_2y_3|)^{2\rho}}\dd y_3)\dd y_2
\\&\lesssim \|\Phi^w\|_{\mathcal{W}^{\rho,\gamma}_T}^2 |s_2-s_1|^{2\gamma}(\int_{|y_2|\leq1}\frac{1}{|y_2|^{1-2\alpha}}\dd y_2)(\int_{\mathbb R}\frac{1}{(1+|z_3|)^{2\rho}}\dd z_3)<+\infty
\end{split}
\end{equation*}
 when $\rho>1/2$, $\alpha>0$ and this give us the bound for $I_3$, we remark also in the case $\alpha=0$ the integral $I_3$ and $I_4$ are essentially the same by symmetry and can be bounded using the same argument. Now we will focus to bound the term $J_2$ for that we simply use the fact that  $|\Phi^{w}_{s_1; s_2}(a)|\leq|s_2-s_1|$ which is valid for all $a\in \mathbb R$. Then:
\begin{equation*}
\begin{split}
J_2^2&=\sup_{x_1}\int_{\mathbb R^2}\dd x_2\dd x_3 \mathbb I_{\{|x_2-x_1|\leq1;|x_3-x_1|\leq1\}}|x_2-x_1|^{2\alpha}|\Phi^{w}_{s_1; s_2}(2(x_2-x_1)(x_3-x_1))|^2
\\&=\int_{\mathbb R^2}\dd y_2\dd y_3 \mathbb I_{\{|y_2|\leq1;|y_3|\leq1\}}|y_2|^{2\alpha}|\Phi^{w}_{s_1; T}(2y_2y_3)|^2
\\&\lesssim |s_2-s_1|^2.
\end{split}
\end{equation*}
All these bounds give us estimates for $(I_l)$, $l\in\{1,2,3\}$, let us focus on the remaning integrals. To bound the integral $I_4$ we proceed in a different way, to simplify the notation let $\eta=2(x_2-x_1)(x_3-x_1)$ and then use the Cauchy-Schwarz inequality to get 
$$
\int_{\mathbb R}\dd x_2\mathbb I_{|x_2-x_1|\geq1}|x_2-x_1|^{\alpha}|\Phi_{s_1; T}(\eta)||\hat\psi_{2}(x_2)| |\hat\psi_{4}(x)|\leq\sup_{x_2}( \mathbb I_{|x_2-x_1|\geq1}|x_2-x_1|^{\alpha}|\Phi^w_{s_1; s_2}(\eta)|)\|\psi_{2}\|_{L^2(\mathbb R)}\|\psi_{4}\|_{L^2(\mathbb R)}
$$
now injecting this inequality in $I_4$ and using Cauchy-Schwarz and Young inequality we obtain that   
\begin{equation*}
\begin{split}
I_4&\leq(\int_{\mathbb R^2}\dd x_3\dd x_1\mathbb I_{|x_3-x_1|\leq1}\sup_{x_2}( \mathbb I_{|x_2-x_1|\geq1}|x_2-x_1|^{\alpha}|\Phi^{w}_{s_1; s_2}(\eta)|)|\hat\psi_{1}(x_1)||\hat\psi_{3}(x_3)|)\|\psi_{2}\|_{L^2(\mathbb R)}\|\psi_{4}\|_{L^2(\mathbb R)}
\\&=(\int_{\mathbb R}|\hat\psi_{s_1}(x_1)|(\int_{\mathbb R}\mathbb I_{|x_3-x_1|\leq1}\sup_{x_2}( \mathbb I_{|x_2-x_1|\geq1}|x_2-x_1|^{\alpha}|\Phi^{w}_{s_1; s_2}(\eta)|)|\hat\psi_{3}(x_3)|\dd x_3)\dd x_1)\|\psi_{2}\|_{L^2(\mathbb R)}\|\psi_{4}\|_{L^2(\mathbb R)}
\\&\leq \left(\int_{\mathbb R}\left|\int_{\mathbb R}\mathbb I_{|x_3-x_1|\leq1}\sup_{x_2}(\mathbb I_{|x_2-x_1|\geq1}|x_2-x_1|^{\alpha}|\Phi^{w}_{s_1; s_2}(\eta)|)|\hat\psi_{3}(x_3)|\dd x_3\right|^2\dd x_1\right)^{1/2}\|\psi_{2}\|_{L^2(\mathbb R)}\|\psi_{4}\|_{L^2(\mathbb R)}\|\psi_1\|_{L^2(\mathbb R)}
\\&\leq\int_{|y_3|\leq1}\sup_{|y_2|\geq1}(|y_2|^{\alpha}|\Phi^{w}_{s_1; s_2}(2y_2y_3)|)\dd y_3\Pi_{i=1}^{4}\|\psi_{i}\|_{L^2(\mathbb R)}
\\&\lesssim\|\Phi^w\|_{\mathcal{W}^{\rho,\gamma}_T} |T-s_1|^{\gamma}\Pi_{i=1}^{4}\|\psi_{i}\|_{L^2(\mathbb R)}\sup_{z_2}\left(|z_2|^{\alpha}(1+|z_2|)^{-\rho}\right)\int_{|y_3|\leq1}|y|^{-\alpha}\dd y_3<+\infty
\end{split}
\end{equation*}
when $\alpha<1$ and $\rho>\alpha$. As was noted previously this gives us also a bound for $I_3$ when $\alpha=0$. 
Now to treat the case $\alpha=1$ we proceed as in~\cite{Debussche2011363}. Indeed after change of variable we can rewrite our integral as:
$$
I(1)=\int_{\mathbb R}|x|\left(\int_{\mathbb R}|\hat\psi_{1}(y_1)||\hat\psi_{2}(x-y_1)|\left(\int_{\mathbb R}|\hat\psi_{3}(y_2)||\hat\psi_{4}(x-y_2)||\Phi^{w}_{s_1; s_2}(2x(y_2-y_1))|\dd y_2\right)\dd y_1\right)\dd x
$$ 
and then by Cauchy-Schwarz and Young inequality we have
\begin{equation*}
\begin{split}
I(1)&\leq(\sup_x|x|\int_{\mathbb R}|\Phi^{w}_{s_1; T}(2xz)|\dd z)\Pi_{i=1}^{4}\|\psi_{i}\|_{L^2(\mathbb R)}
\lesssim \|\Phi^w\|_{\mathcal{W}^{\rho,\gamma}_T}|s_2-s_1|^{\gamma}(\int_{\mathbb R}(1+|z|)^{-\rho}\dd z)\Pi_{i=1}^{4}\|\psi_{i}\|_{L^2(\mathbb R)}.
\end{split}
\end{equation*}
The r.h.s is finite if $\rho>1$ and this concludes the proof.
\end{proof}

\subsection{Global existence for the modulated cubic NLS equation}
\label{sec:global-NLS}

Here we obtain global solution of positive regularity for the modulated cubic NLS equation: we take $\cN(\varphi) = i \varphi |\varphi|^2$ and we work indifferently in $\RR$ or $\TT$. 

\begin{theorem}\label{prop:Global-Schr}
Fix $\alpha \ge 0$,  $\phi\in H^{\alpha}$ and $T>0$. Then there exist $v\in C^{1/2}([0,T],H^\alpha)$ such that the following equation holds 
$$
v_t=\phi+\int_{0}^{t}X_{\dd\sigma}(v_{\sigma})
$$
for all $t\in[0,T]$. 
\end{theorem}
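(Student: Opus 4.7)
\emph{Local existence.} Proposition~\ref{th:reg-1d} provides $X\in\cC^\gamma([0,T],\cL^3 H^\alpha)$ for some $\gamma>1/2$, so the cubic map $\phi\mapsto X_{st}(\phi,\phi,\phi)$ belongs to $\mathrm{Lip}_2(H^\alpha)$. The Picard argument of Section~\ref{sec:sol} then produces a unique $v\in\cC^{1/2}([0,T_0],H^\alpha)$ solving the Young equation, with $T_0=T_0(\|\phi\|_{H^\alpha})>0$. The global extension will follow from the conservation-law argument of Lemma~\ref{lemma:global} applied at $V=L^2$.

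\emph{$L^2$ conservation.} Write
\[
\|\phi+X_{st}(\phi)\|_{L^2}^2-\|\phi\|_{L^2}^2 = 2\Re\langle\phi,X_{st}(\phi)\rangle_{L^2}+\|X_{st}(\phi)\|_{L^2}^2.
\]
The second term is $O(|t-s|^{2\gamma})$ with $2\gamma>1$, by the uniform Galerkin bound $\sup_L\|(X^2)^L\|_{\cC^\gamma\cL^3(L^2)}<\infty$ of Proposition~\ref{th:reg-1d} combined with the explicit form of $X^1$. The cross term vanishes upon expanding $X_{st}(\phi)$ as the limit of the Galerkin integrals $\int_s^t(U^w_u)^{-1}\cN_N(U^w_u\phi)\,\dd u$: since $U^w_u$ is an $L^2$-isometry and $\Re\langle\Pi_N\psi,i|\Pi_N\psi|^2\Pi_N\psi\rangle_{L^2}=0$ pointwise, each Galerkin pairing with $\phi$ is purely imaginary, and passing to the limit gives $\Re\langle\phi,X_{st}(\phi)\rangle_{L^2}=0$. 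Hence Lemma~\ref{lemma:global} applies with exponent $2\gamma>1$, yielding $\|v_t\|_{L^2}=\|\phi\|_{L^2}$ on the maximal existence interval; in particular the case $\alpha=0$ is settled.

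\emph{Upgrade to $H^\alpha$ for $\alpha>0$.} Let $T_{\max}$ be the maximal existence time in $H^\alpha$ and suppose $T_{\max}<T$. Refining the proof of Proposition~\ref{th:reg-1d} by inserting $\langle\xi\rangle^\alpha\lesssim\langle\xi_1\rangle^\alpha+\langle\xi_2\rangle^\alpha+\langle\xi_3\rangle^\alpha$ before the Cauchy--Schwarz step and symmetrising the resulting three branches yields the ``regularity-preservation'' bound
\[
\|X_{st}(v,v,v)\|_{H^\alpha}\lesssim|t-s|^\gamma\,\|v\|_{H^\alpha}\,\|v\|_{L^2}^2,
\]
which is \emph{linear} in $\|v\|_{H^\alpha}$. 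Combined with the $L^2$-conservation from the previous step and with the matching ``two factors in $L^2$'' refinement of the Young remainder estimate of Theorem~\ref{th:young}, the Young equation becomes a Gronwall-type inequality on $[0,T_{\max})$ whose solution satisfies $\|v_t\|_{H^\alpha}\le C(T,\|\phi\|_{L^2})\|\phi\|_{H^\alpha}$, contradicting finite-time blow-up of the $H^\alpha$ norm and forcing $T_{\max}=T$. The main obstacle is precisely this linear-in-$\|v\|_{H^\alpha}$ refinement and its analogue for the remainder: without the extra regularity provided by the $\rho$-irregularity of $w$, the naive cubic bound would permit finite-time $H^\alpha$-blow-up and no $L^2$-conservation argument would close.
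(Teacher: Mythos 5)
Your proposal is correct and follows essentially the same route as the paper: local existence by the Young/Picard argument, $L^2$ conservation via $\Re\langle\phi,X_{st}(\phi)\rangle=0$ combined with Lemma~\ref{lemma:global}, and the trilinear estimate placing one factor in $H^\alpha$ and two in $L^2$ --- your symmetrisation after $\langle\xi\rangle^{2\alpha}\lesssim\sum_i\langle\xi_i\rangle^{2\alpha}$ is exactly the paper's decomposition $X^{2}=X^{21}+X^{22}+X^{23}$ by dominant frequency. The only cosmetic difference is that the paper packages the continuation as a two-tier fixed point (an $L^2$ ball $B_{r^\star}$, then invariance of an $H^\alpha$ ball on a time interval depending only on $r^\star(\|\phi\|_{L^2})$) rather than your Gronwall-type blow-up criterion, and it likewise leaves the corresponding refinement of the Young remainder estimate implicit.
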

In order to prove this result we will need to show that the operator $X$ corresponding to NLS obeys the $L^2$ conservation law. 
\begin{lemma}
\label{lemma:conservation}
 We have that for any $\phi\in L^2$ and any $0\le s \le t$: 
$$
\langle \phi, X_{s;t}(\phi,\phi,\phi)\rangle \in \RR
$$
and there exists a constant $C_R$ such that for all $\phi\in L^2$ with $\|\phi\|_{L^2}\le R$ we have
$$
|\|\phi+X_{s;t}(\phi,\phi,\phi)\|_{L^2}-\|\phi\|_{L^2}|\le C_R |t-s|^{2\gamma}.
$$
\end{lemma}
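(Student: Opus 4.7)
The plan is to establish both assertions via a Galerkin truncation: at finite truncation the identity is the infinitesimal $L^2$-conservation of the cubic NLS flow, encoded in $\mathrm{Re}\langle\psi,i|\psi|^2\psi\rangle=0$, and one then passes to the limit using the trilinear estimate of Proposition~3.2 and its attendant uniform $L^2$ bound.

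I would first work at the truncated level. Since $\Pi_L$ is a Fourier multiplier it commutes with the unitary group $U^w_\sigma=e^{iw_\sigma\partial^2}$, so for any $\phi\in L^2$
$$X^L_{st}(\phi,\phi,\phi)=\int_s^t (U^w_\sigma)^{-1}\cN_L(U^w_\sigma\phi)\,\dd\sigma,\qquad \cN_L(\psi)=\Pi_L\cN(\Pi_L\psi).$$
The unitarity of $U^w_\sigma$ on $L^2$ together with the self-adjointness of $\Pi_L$ give
$$\langle\phi,(U^w_\sigma)^{-1}\cN_L(U^w_\sigma\phi)\rangle=\langle\Pi_L U^w_\sigma\phi,i|\Pi_L U^w_\sigma\phi|^2\Pi_L U^w_\sigma\phi\rangle=i\|\Pi_L U^w_\sigma\phi\|_{L^4}^4\in i\RR,$$
so integration in $\sigma$ yields $\mathrm{Re}\langle\phi,X^L_{st}(\phi,\phi,\phi)\rangle=0$ for every $L\ge 1$.

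Next I would pass to the limit $L\to\infty$. By Proposition~3.2, for $\phi\in H^\alpha$ with $\alpha>0$ one has $X^L\to X$ in $\cC^\gamma([0,T],\mathcal L^3 H^\alpha)$, giving $\mathrm{Re}\langle\phi,X_{st}(\phi,\phi,\phi)\rangle=0$ on this dense subset of $L^2$. The uniform bound $\sup_L\|(X^2)^L\|_{\mathcal L^3 L^2}<\infty$ of the same proposition, together with the explicit boundedness of $X^1$ on $L^2$, shows that the map $\phi\mapsto\langle\phi,X_{st}(\phi,\phi,\phi)\rangle$ is continuous from $L^2$ to $\mathbb C$, so the identity extends by density to all $\phi\in L^2$; this is the first assertion (the literal ``$\in\RR$'' reflecting a convention in which the factor $i$ carried by $\cN$ is absorbed into the operator $X$).

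For the second assertion, the first reduces the squared norm to a perfect sum:
$$\|\phi+X_{st}(\phi,\phi,\phi)\|_{L^2}^2=\|\phi\|_{L^2}^2+2\mathrm{Re}\langle\phi,X_{st}(\phi,\phi,\phi)\rangle+\|X_{st}(\phi,\phi,\phi)\|_{L^2}^2=\|\phi\|_{L^2}^2+\|X_{st}(\phi,\phi,\phi)\|_{L^2}^2.$$
The trilinear estimate of Proposition~3.2 at $\alpha=0$ gives $\|X_{st}(\phi,\phi,\phi)\|_{L^2}\lesssim|t-s|^\gamma\|\phi\|_{L^2}^3$, hence $|\|\phi+X_{st}\|_{L^2}^2-\|\phi\|_{L^2}^2|\lesssim\|\phi\|_{L^2}^6|t-s|^{2\gamma}$. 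To pass from squared to un-squared norms: if $\phi=0$ trilinearity gives $X_{st}(\phi,\phi,\phi)=0$ and the claim is trivial, while for $\phi\neq 0$
$$\bigl|\|\phi+X_{st}\|_{L^2}-\|\phi\|_{L^2}\bigr|=\frac{\|X_{st}\|_{L^2}^2}{\|\phi+X_{st}\|_{L^2}+\|\phi\|_{L^2}}\le\frac{\|X_{st}\|_{L^2}^2}{\|\phi\|_{L^2}}\lesssim\|\phi\|_{L^2}^5|t-s|^{2\gamma}\le C_R|t-s|^{2\gamma},$$
with $C_R\lesssim R^5$. The main obstacle is the passage to the limit on the $L^2$-endpoint handled in the previous paragraph, since Proposition~3.2 does not provide norm convergence there; the uniform $\mathcal L^3 L^2$ bound combined with density of $H^\alpha$ in $L^2$ resolves it.
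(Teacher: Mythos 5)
Your proof is correct and follows essentially the same route as the paper's: compute $\langle\phi,(U^w_\sigma)^{-1}\cN(U^w_\sigma\phi)\rangle$ explicitly on a dense class of regular $\phi$ to see that the cross term vanishes (the paper states ``$\in\RR$'' because its formula for $X$ drops the factor $i$ carried by $\cN$, exactly the convention issue you flag), extend by the trilinear $L^2$-boundedness of $X_{st}$, and then use the Pythagorean identity $\|\phi+X_{st}\|_{L^2}^2=\|\phi\|_{L^2}^2+\|X_{st}\|_{L^2}^2$ together with $\|X_{st}(\phi,\phi,\phi)\|_{L^2}\lesssim|t-s|^\gamma\|\phi\|_{L^2}^3$. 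Your Galerkin truncation and the separate treatment of $\phi=0$ are just more careful renditions of the paper's ``for smooth $\phi$ \dots integrating in $s$ and extending to arbitrary $\phi\in L^2$''.
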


\begin{proof}
We start observing that for smooth $\phi$:
$$
\langle \phi, \dot X_{s}(\phi,\phi,\phi)\rangle =2|\phi|^4_{L^2} +\langle \phi, U^{w}_{-s}( |U_s \phi|^2 U^{w}_s \phi)\rangle 
= 2|\phi|_2^2+\langle U^{w}_s \phi,  |U^{w}_s \phi|^2 U^{w}_s \phi \rangle\in \RR
$$
Integrating in $s$ and extending to arbitrary $\phi\in L^2$ we get the claim. Then if $\phi\in L^2$  we have
$$
\|\phi+X_{s;t}(\phi,\phi,\phi)\|^2_{L^2}=\|\phi\|^2_{L^2}+\|X_{s;t}(\phi,\phi,\phi)\|^2_{L^2}
$$
so
$$
|\|\phi+X_{s;t}(\phi,\phi,\phi)\|_{L^2}-\|\phi\|_{L^2}|\le \frac{\|X_{s; t}(\phi,\phi,\phi)\|^2_{L^2}}{\|\phi\|_{L^2}} \lesssim |t-s|^{2\gamma} \|\phi\|_{L^2}^5 .
$$
\end{proof}

\begin{proof}[Proof of Theorem~\ref{prop:Global-Schr}]
The existence of the global solution in $L^2$ is given by the conservation law $||v_t||_{L^2}=||\phi||_{L^2}$ which for local solutions holds thanks to the previous lemma and to the Lemma~\eqref{lemma:global}. Then by patching local solutions   we obtain a global solution. Let us now turn to the proof of existence of global solution in $H^\alpha$ for any regularity $\alpha >0$. We will describe the details of the computations in the periodic setting. The non-periodic setting allows for a similar but simpler proof since resonances will not play any role. Recall the decomposition  $X=X^1+X^2$ introduced in Proposition~\ref{th:reg-1d} and decompose further $X^2$ as 
$$
X^2=X^{21}+X^{22}+X^{23}
$$
with 
$$
\cF X_{t}^{2j}(\psi_1,\psi_2,\psi_3)(k)=\sum_{(k_1,k_2,k_3)\in D^k_j}\hat\psi_1(k_1)^{\star}\hat\psi_2(k_2)\hat\psi_3(k_3)\Phi^w_{t}((k-k_2)(k-k_3)) 
$$
 for $j\in\{1,2,3\}$ and where $D^k_1=\{-k_1+k_2+k_3=k,k_2\ne k,k_3\ne k\}\cap\{|k_1|\geq|k|/3\}$, $D^k_2=\{-k_1+k_2+k_3=k,k_2\ne k,k_3\ne k\}\cap\{|k_1|<|k|/3,|k_2|\geq|k|/3\}$ and $D^k_3=\{-k_1+k_2+k_3=k,k_2\ne k,k_3\ne k\}\cap\{|k_1|<|k|/3,|k_2|<|k|/3,|k_3|\geq|k|/3\}$. Using Cauchy-Schwarz inequality we have the following bound  
\begin{equation}
\begin{split}
||X^{2j}_{s; t}(\psi_1,\psi_2,&\psi_3)||^2_{H^{\beta+\eps}}\leq||\psi_j||_{H^{\beta+\eps}}\Pi_{i\ne j}||\psi_i||_{H^\beta} \times
\\ &\times \sup_k|k|^{2\beta+2\eps} \sum_{D_j^k}|k_j|^{-2\beta-2\eps}\left(\Pi_{i\ne j}|k_i|^{-2\beta}\right) |\Phi^w_{s; t}(2(k-k_2)(k-k_3))|^2
\end{split}
\end{equation}
 for $\beta,\eps\geq0$ then using the fact that $|k|\lesssim|k_j|$ on $D_j^k$ and using the $\rho$-irregularity of $w$ we obtain 
$$
||X^{2j}_{s; t}(\psi_1,\psi_2,\psi_3)||^2_{H^{\beta+\eps}}\lesssim_{\alpha,\eps}||\Phi^w||_{\mathcal W^{\rho,\gamma}_T}|t-s|^{\gamma}||\psi_j||_{H^{\beta+\eps}}\Pi_{i\ne j}||\psi_i||_{H^\beta}\left(\sum_{l\ne0}|l|^{-2\rho}\right)^2<+\infty
$$
when $\rho>1/2$ and then we have that for all $T>0$ there exist $\gamma>1/2$ such that $X^{21}$ belongs to $ C^{\gamma}\left([0,T],\mathcal L^3(H^{\beta+\eps}\times H^{\beta}\times H^\beta,H^{\beta+\eps})\right)$ for all $\beta,\eps\geq0$. Of course the same statement holds for the other operators. Below we will use this result for $\beta=0$ and $\eps=\alpha$.  Now let us define a norm on $C^{1/2}([0,T],H^\alpha)$ by $||\psi||_{\alpha}=||\psi||_{ C^{1/2}([0,T],H^\alpha)}+||\psi||_{ C^0([0,T],H^\alpha)}$  for $\alpha\geq 0$ and the map $\Gamma$ by: 
$$
\Gamma(\psi):=\phi+\int_0^t X_{\dd\sigma}(\psi_\sigma)
$$
for $\psi\in  C^{1/2}([0,T],H^\alpha)$. By a simple computation we see that 
$$
||\Gamma(\psi)||_{0}\lesssim_{\gamma,w}||\phi||_{L^2}+T^{\gamma-1/2}||\psi||_{0}^3.
$$
Now if $0<T\leq T_1$ is sufficiently small then the equation $r=||\phi||_{L^2}+T^{\gamma-1/2}r^3$ admits a positive solution $r^{\star}>0$ and  the closed ball $B_{r^{\star}}=:\{\psi\in C^{1/2}([0,T],L^2);||\psi||_0\leq r_T^\star\}$ is invariant by $\Gamma$. Moreover we have that 
$$
||\Gamma(\psi_1)-\Gamma(\psi_2)||_0\lesssim_{\gamma,w} T^{\gamma-1/2}||\psi_1-\psi_2||_0(1+(r^{\star})^2)
$$
and then if $T\leq T_2\leq T_1$ sufficiently small,  $\Gamma$ is a strict contraction on $B_{r^{\star}}$ which admits a unique fixed point $v$. Let $\Gamma_{B_{r^{\star}}}$ the restriction of $\Gamma$ on $B_{r^{\star}}$ and use the fact that
$$
\Gamma(\psi)_t=\phi+2\int_0^t\psi_\sigma||\psi_\sigma||_{L^2}^2\dd\sigma+\sum_{j\in\{1,2,3\}}\int_{0}^{t}X^{2j}_{\dd\sigma}(\psi_\sigma)
$$
and the regularity of $X^{2j}$ to deduce that
$$
||\Gamma_{B_{r^{\star}}}(\psi)||_{\alpha}\lesssim||\phi||_{H^\alpha}+T^{\gamma-1/2}(r^{\star})^2||\psi||_{\alpha}.
$$
Then $B(0,R):=\{\psi\in C([0,T],H^\alpha);||\psi||\leq R\}$ is invariant by $\Gamma_{B_{r^{\star}}}$ for $T^{\star}=T^{\star}(\|\phi\|_{L^2})$ small enough depending only on $\|\phi\|_{L^2}$ since $r^\star=r^\star(\|\phi\|_{L^2})$. Given that the ball $B(0,R)$ is closed in $C^{1/2}([0,T],L^2)$  we have that $v\in C^{1/2}([0,T^{\star}],H^{\alpha})$ and using the conservation law in $L^2$ we can repeat the argument on $[T^\star,2T^{\star}]$ and  recursively  obtain that $v\in C^{1/2}([0,T],H^\alpha)$.
\end{proof}
\subsection{Convergence of regularized models}
We study here the convergence of approximations given by standard PDEs to the solution of the Young equations. Consider the following regularized problem 
\begin{equation}\label{eq:Cauchy-reg}
\left\{
\begin{aligned}
\partial_t \varphi_t& = A \varphi_t\partial_tn_t + \Pi_{L}\cN(\Pi_{L}\varphi_t),\qquad t\ge 0,
\\&\varphi(0,x)=\Pi_{L} \phi(x)\in C^{\infty}(\mathbb T)
\end{aligned}
\right.
\end{equation}
with $n$ being a differentiable function, $\phi\in H^\alpha(\mathbb T)$ for $\alpha>0$, $A=i\partial^2_x$ and $\mathcal N(\phi)=i|\phi^2|\phi$. This Cauchy problem is equivalent to the mild formulation 
\begin{equation}\label{eq:mild-phi}
\varphi_t=U^{n}_t\Pi_{L}\phi+\int_{0}^{t}U^{n}_t(U^{n}_{s})^{-1}\Pi_{L}\cN(\Pi_{L}\varphi_s)\dd s,\qquad t\ge 0
\end{equation}
or equivalently
\begin{equation}\label{eq:mild-reg}
\psi_t=\Pi_{L}\phi+\int_{0}^{t}(U^{n}_{s})^{-1}\Pi_{L}\cN(\Pi_{L}U^{n}_s\psi_s)\dd s,\qquad t\ge 0,
\end{equation}
with $U^{n}_t=e^{An_t}$ and $\psi_t=(U^{n})^{-1}_t\varphi_t$. Now we can check easily that  the modulated operator $X^{n,L}$ associated to the equation~\eqref{eq:mild-reg} is well defined and satisfies
$$
||X_{s; t}^{n,L}||_{\mathcal L^2(H_{\alpha_1},H_{\alpha_2})}\lesssim_{n,L}|t-s|
$$
for all $\alpha_1,\alpha_2\in \mathbb R$. By a fixed point argument we obtain the existence of a unique Young local solution $\varphi^{n,L}\in C([0,T^{\star}],L^2)$ such that $ \psi^{n,L}_t=(U_t^{n})^{-1}\varphi^{n,L}_t\in C^{1}([0,T^{\star}],L^2)$ moreover we have that $\psi^{n,L}\in\cap_{\beta\geq0} C^{1}([0,T^{\star}],H_{\beta})$ and then clearly 
$$
\partial_t \varphi_t = A \varphi_t\partial_tn_t + \Pi_{L}\cN(\Pi_{L}\varphi_t)
$$
in the weak sense. To obtain a global solution is sufficient to remark that for all $v\in L^2$
\begin{equation*}
\begin{split}
\langle v,X^{n,L}_{s; t}(v)\rangle&=2(t-s)\|v\|^4_{L^2}+\int_{s}^{t}\dd\sigma\int_{\mathbb T}U^{n}_{\sigma} v\Pi_{L}(|\Pi_{L}U^n_\sigma v|^2(U_\sigma \Pi_{L}v)^{\star})
\\&=2(t-s)\|v\|^4_{L^2}+\int_{s}^{t}\dd\sigma\int_{\mathbb T}|U_\sigma^n\Pi_{L}v|^4\in\mathbb R .
\end{split}
\end{equation*}
This implies
\begin{equation*}
\begin{split}
||\psi^{n,L}_t||^2_{L^2}&=||\psi^{n,L}_s||^2_{L^2}+||\psi^{n,L}_t-\psi^{n,L}_s||_{L^2}^2+2\mathcal Re(\langle\psi^{n,L}_s,iX^{n,L}_{st}(\psi^{n,L}_s,\psi^{n,L}_s,\psi^{n,L}))\rangle+R_{s, t}
\\&=||\psi^{n,L}_s||^2_{L^2}+||\psi^{n,L}_t-\psi^{n,L}_s||_{L^2}^2+R_{s, t}
\end{split}
\end{equation*}
for all $s,t\in[0,T^{\star}]$ with $|R_{s, t}|\lesssim|t-s|^2$. From this follows $|||\psi^{n,L}_t||^2_{L^2}-||\psi^{n,L}_s||^2_{L^2}|\lesssim |t-s|^2$, that is  $||\psi^{n,L}_t||_{L^2}=||\Pi_{L}\phi||_{L^2}$ for all $t\ge 0$. Using this conservation law we can extend our local solution to a global one in $L^2$ and using the same proof of Theorem~\eqref{prop:Global-Schr} we construct a global solution in $H^\alpha$. The mild eq.~\eqref{eq:mild-reg} has a meaning even when $n$ is only a continuous function. Let $R>0$, $T>0$ and assume that $\sup_{\sigma\in[0,T]}|n_\sigma|\leq R$ then we obtain  
$$
||\psi^{n,L}||_{C^{1-\eps}([0,T],H^\alpha)}\lesssim_{L}T_1^{\eps}||X^{n,L}||_{C^1([0,T],\mathcal L^2H^\alpha)}(||\psi^{n,L}||_{C^{1-\eps}([0,T_1],H^\alpha)}+||\Pi_{L}\phi||_{H^\alpha})^2
$$
for all $T_1<\min(1,T)$, using the fact that $||X^{n,L}||_{C^1([0,T],\mathcal L^2H^\alpha)}\lesssim_{L}\sup_{\sigma\in[0,T]}|n_{\sigma}|\lesssim_{L}R$ and taking $T_1=T_1(||\Pi_{L}\phi||_{H^\alpha})$ small enough we can see that $||\psi^{n,L}||_{C^{1-\eps}([0,T_1],H^\alpha)}\lesssim_{L}R$. Finally iterating these results on contiguous small subintervals of $[0,T]$ gives us that $||\psi^{n,L}||_{C^{1-\eps}([0,T],H^\alpha)}\lesssim_{L}R$.  By a similar argument we obtain easily 
$
||\psi^{n^2,L}-\psi^{n^1,L}||_{C^{1-\eps}([0,T],H^\alpha)}\lesssim_{L,R}\sup_{\sigma\in[0,T]}|n^1_{\sigma}-n^2_{\sigma}|
$
for all $n_1,n_2\in C([0,T])$ such that $\sup_{\sigma\in[0,T]}|n^i_{\sigma}|\leq R$ for $i=1,2$ where $\psi^{n^1,L}$,$\psi^{n^2,L}$ are respectively the global solution of the eq.~\eqref{eq:mild-reg} associated to the dispersion $n^1$ and $n^2$. Now let $w^{N}$ a regularization of the continuous $\rho$--irregular function $w$   and assume that  
$
\sup_{\sigma\in[0,T]}|w^N_{\sigma}-w_{\sigma}|\to_{N\to+\infty}0
$
for all $T>0$. Then the solutions $(\varphi^{N,L})_{N\in\mathbb N}$ of the regularized problem~\eqref{eq:Cauchy-reg} with  dispersion $w^N$ converge in $C([0,T],H^\alpha)$ to $\varphi^{L}$ which is the solution of the mild equation~\eqref{eq:mild-phi} with dispersion $w$:
\begin{equation}\label{eq:reg-Galerkin}
\varphi^{L}_t=U^{w}\Pi_{L}\phi+\int_{0}^{t}(U^{w}_t)(U^{w}_s)^{-1}\Pi_{L}\mathcal N(\Pi_{L}\varphi_s)\dd s .
\end{equation}
Finally we have
\begin{theorem}\label{th:conv-Gal}
Let $\rho>1/2$, $\alpha>0$, $T>0$ and $\varphi^{L}$, $\varphi$ respectively the solution of the mild eq.~\eqref{eq:reg-Galerkin} on $[0,T]$ and the modulated cubic NLS equation then
$$
||\psi^{L}-\psi||_{C^{1/2}([0,T],H^\alpha)}\to^{L\to+\infty}0
$$ 
with $\psi^L_t=(U^{w}_t)^{-1}\varphi^{L}_t$ and $\psi_t=(U_t^{w})^{-1}\varphi_t$. 
\end{theorem}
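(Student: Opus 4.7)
The strategy is a stability argument for the Young equation combined with uniform a priori bounds on the Galerkin approximations. Set $\eta^L_t := \psi^L_t - \psi_t$. Since $\psi$ solves $\psi_t = \phi + \int_0^t X_{\dd\sigma}(\psi_\sigma)$ and $\psi^L$ solves $\psi^L_t = \Pi_L\phi + \int_0^t X^L_{\dd\sigma}(\psi^L_\sigma)$, subtracting and splitting as
$$
\eta^L_t = (\Pi_L-\Id)\phi + \int_0^t (X^L-X)_{\dd\sigma}(\psi^L_\sigma) + \int_0^t \bigl[X_{\dd\sigma}(\psi^L_\sigma)-X_{\dd\sigma}(\psi_\sigma)\bigr]
$$
we isolate the two sources of error: the difference between the operators $X^L$ and $X$ (plus the initial data projection), and the difference between the two solutions through the same operator $X$. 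Using the trilinearity of $X_{st}$, the last integrand expands as $X_{\dd\sigma}(\eta^L,\psi^L,\psi^L)+X_{\dd\sigma}(\psi,\eta^L,\psi^L)+X_{\dd\sigma}(\psi,\psi,\eta^L)$.

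The first task is to obtain uniform-in-$L$ bounds for $\psi^L$ in $\cC^{1/2}([0,T],H^\alpha)$. The conservation law for the Galerkin flow (established in the discussion preceding the theorem) gives $\|\psi^L_t\|_{L^2}=\|\Pi_L\phi\|_{L^2}\le\|\phi\|_{L^2}$ for all $t\in[0,T]$ and all $L$. Repeating the bootstrap from the proof of Theorem~\ref{prop:Global-Schr}, using that Proposition~\ref{th:reg-1d} yields $\sup_L\|X^L\|_{\cC^\gamma\cL^3 H^\alpha}<+\infty$ (this is built into the definition of $\cX^w_{3,H^\alpha}$), one iterates the local existence on contiguous subintervals whose length is controlled by $\|\phi\|_{L^2}$ only, obtaining a constant $K=K(T,\|\phi\|_{H^\alpha})$ such that $\|\psi^L\|_{\cC^{1/2}([0,T],H^\alpha)}\le K$ uniformly in $L$.

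With these bounds in hand, apply Young's estimate (Theorem~\ref{th:young}) to the two integrals in the decomposition of $\eta^L$. On a short interval $[0,T_0]$ one obtains
$$
\|\eta^L\|_{\cC^{1/2}([0,T_0],H^\alpha)} \le \|(\Pi_L-\Id)\phi\|_{H^\alpha} + C\,T_0^{\gamma-1/2}\,\|X^L-X\|_{\cC^\gamma\cL^3 H^\alpha}\,(1+K)^2 + C\,T_0^{\gamma-1/2}\,(1+K)^2\,\|\eta^L\|_{\cC^{1/2}([0,T_0],H^\alpha)}.
$$
Choosing $T_0$ depending only on $K$ so that the last factor is absorbed into the left-hand side, we get
$$
\|\eta^L\|_{\cC^{1/2}([0,T_0],H^\alpha)} \le 2\bigl[\|(\Pi_L-\Id)\phi\|_{H^\alpha} + C(1+K)^2 T_0^{\gamma-1/2}\|X^L-X\|_{\cC^\gamma\cL^3 H^\alpha}\bigr].
$$
Both terms on the right tend to $0$ as $L\to\infty$: the first by the definition of $H^\alpha$, the second because $X^L\to X$ in $\cC^{\gamma}([0,T],\cL^3 H^\alpha)$ by the $\cX^w_{3,H^\alpha}$ property of $X$ proved in Proposition~\ref{th:reg-1d}.

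Finally, because the step size $T_0$ and the constant $K$ depend only on $\|\phi\|_{H^\alpha}$ and $T$ (not on $L$ nor on the starting time of a sub-step), the argument can be iterated over a finite partition $0=t_0<t_1<\cdots<t_N=T$ of $[0,T]$, treating $\psi^L_{t_k}$ and $\psi_{t_k}$ as new initial data on each $[t_k,t_{k+1}]$. Propagating the error at each step yields $\|\psi^L-\psi\|_{\cC^{1/2}([0,T],H^\alpha)}\to 0$ as $L\to\infty$. The main obstacle in the argument is the cubic growth: the Young estimate produces quadratic factors in $K$, so obtaining $L$-independent bounds on $\psi^L$ via the $L^2$ conservation law combined with the uniform operator bounds from $\cX^w_{3,H^\alpha}$ is the crucial point that makes the contraction argument work.
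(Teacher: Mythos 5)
The paper actually states Theorem~\ref{th:conv-Gal} without giving a proof, so there is nothing to compare line by line; your argument supplies essentially the proof the authors intend, built from exactly the ingredients they prepare (the $L^2$ conservation law for the Galerkin flow, the uniform bounds via the bootstrap of Theorem~\ref{prop:Global-Schr}, the convergence property in the definition of $\cX^w_{3,H^\alpha}$ verified in Proposition~\ref{th:reg-1d}, and the Young estimate). Two points in your write-up are imprecise and should be repaired. First, the claim $\|X^L-X\|_{\cC^\gamma\cL_3 H^\alpha}\to 0$ is not what Proposition~\ref{th:reg-1d} gives: the operator splits as $X=X^1+X^2$ and only the non-resonant piece $X^2$ converges in operator norm; for the resonant piece $X^1_{st}(\psi_1,\psi_2,\psi_3)=(t-s)(\psi_2\langle\psi_1,\psi_2\rangle+\psi_3\langle\psi_1,\psi_3\rangle)$ one has $\|(X^1)^L-X^1\|_{\cL_3 H^\alpha}\not\to 0$ since $\|\Pi_L-\Id\|_{\cL_1(H^\alpha)}=1$. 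This is harmless in your decomposition because the Galerkin solution satisfies $\psi^L=\Pi_L\psi^L$ and $X^1$ maps the range of $\Pi_L$ into itself, so the resonant contribution to $(X^L-X)_{\dd\sigma}(\psi^L_\sigma)$ vanishes identically — but this observation (or an equivalent strong-convergence argument at the fixed path $\psi$) must be made explicit, otherwise the displayed stability estimate rests on a false norm convergence. Second, the definition of $\cX^w_{3,V}$ only provides $X^L\to X$ in $\cC^{1/2}([0,T],\cL_3 H^\alpha)$, which is borderline for Young integration against a $\cC^{1/2}$ path; you should interpolate this with the uniform $\cC^\gamma$ bounds ($\gamma>1/2$) to get convergence in $\cC^{\gamma'}$ for some $\gamma'\in(1/2,\gamma)$ before invoking Theorem~\ref{th:young}. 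With these two repairs the contraction-plus-iteration scheme you describe closes correctly.
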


\subsection{Cubic non linear Schr\"odinger equation on $\RR^2$}
\label{sec:2d-cubic-nls}
To obtain a local existence result for the modulated Schr\"odinger equation on $\RR^2$ we need to obtain regularity estimates for the appropriate  modulated operator $X$. Here $\dot X_s(\psi_1,\psi_2,\psi_3)$ is a trilinear operator with Fourier transform given by
\begin{equation*}
\cF \dot X_{s}(\psi_1, \psi_2, \psi_3) (\xi) = \int_{\substack{\xi_1,\xi_2,\xi_3 \in\RR^2\\ \xi=-\xi_1+\xi_2+\xi_3}} e^{i w_s (|\xi|^2+|\xi_1|^2-|\xi_2|^2-|\xi_3|^2)} \, \hat \psi_1(\xi_1)^* \hat \psi_2 (\xi_2) \hat \psi_3(\xi_3) \dd \xi_2 \dd \xi_3 .
\end{equation*}
 Note that  $|\xi|^2+|\xi_1|^2-|\xi_2|^2-|\xi_3|^2 =2 \langle \xi -\xi_2,\xi - \xi_3\rangle_{\RR^2}= \Xi$ under the condition that $\xi=-\xi_1+\xi_2+\xi_3$. Then $X$ has the expression
\begin{equation*}
\cF  X_{t}(\psi_1, \psi_2, \psi_3) (\xi) = \int_{\substack{\xi_1,\xi_2,\xi_3 \in\RR^d\\ \xi=-\xi_1+\xi_2+\xi_3}} \Phi^{w}_{t}(\Xi)\, \hat \psi_1(\xi_1)^* \hat \psi_2 (\xi_2) \hat \psi_3(\xi_3) \dd \xi_2 \dd \xi_3 .
\end{equation*}
Using the $(\rho,\gamma)$-iregularity of $w$ we can easily obtain that 

\begin{equation*}
|\langle \psi, X_{s;t}(\psi_1,\psi_2,\psi_3)\rangle| \le
\int_{\substack{\xi,\xi_1,\xi_2,\xi_3 \in\RR^2\\ \xi=-\xi_1+\xi_2+\xi_3}} 
|\Phi^{w}_{s;t}(\Xi)|
 \,|\hat \psi(\xi)|  |\hat \psi_1(\xi_1)| |\hat \psi_2 (\xi_2)| | \hat \psi_3(\xi_3)| \dd \xi_1 \dd \xi_2 \dd \xi_3
\end{equation*}
\begin{equation*}
\le J^{1/2}  \|\Phi^w\|_{\mathcal{W}^{\rho,\gamma}_T} |t-s|^\gamma\|\psi\|_{-\alpha} \|\psi_1\|_\alpha \|\psi_2\|_\alpha \|\psi_3\|_\alpha
\end{equation*}
with
$$
 J=\sup_{\xi\in\RR^2} \int_{\substack{\xi_1,\xi_2,\xi_3 \in\RR^2\\ \xi=-\xi_1+\xi_2+\xi_3}}  (1+2|\langle \xi -\xi_2,\xi - \xi_3\rangle|)^{-2\rho} (1+|\xi|^2)^{\alpha} \prod_{i=1,2,3}(1+|\xi_i|^2)^{-\alpha} \dd \xi_2 \dd \xi_3 
$$

\begin{lemma}
The quantity $J$ is finite when $\alpha > 1/2$ and $\rho > 1/2$.
\end{lemma}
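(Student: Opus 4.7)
My plan is to remove the dependence on $\xi$ in the quadratic phase by the change of variables $\eta=\xi-\xi_{2}$, $\zeta=\xi-\xi_{3}$, so that $\xi_{1}=\xi-\eta-\zeta$ and $\langle \xi-\xi_{2},\xi-\xi_{3}\rangle =\langle \eta,\zeta\rangle$. Writing $\langle x\rangle=(1+|x|^{2})^{1/2}$, this recasts $J$ as
\[
J=\sup_{\xi\in\RR^{2}}\int_{\RR^{2}\times\RR^{2}}\frac{\langle \xi\rangle^{2\alpha}\,\dd \eta\,\dd \zeta}{\langle \xi-\eta-\zeta\rangle^{2\alpha}\,\langle \xi-\eta\rangle^{2\alpha}\,\langle \xi-\zeta\rangle^{2\alpha}\,(1+2|\langle \eta,\zeta\rangle|)^{2\rho}}.
\]
Since $\xi=-\xi_{1}+\xi_{2}+\xi_{3}$ yields $\langle \xi\rangle\lesssim\max_{i}\langle \xi_{i}\rangle$, the Peetre-type bound $\langle \xi\rangle^{2\alpha}\lesssim\sum_{i}\langle \xi_{i}\rangle^{2\alpha}$ splits $J\lesssim J_{1}+J_{2}+J_{3}$, each $J_{i}$ obtained by deleting the factor $\langle \xi_{i}\rangle^{-2\alpha}$. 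The symmetry $\eta\leftrightarrow\zeta$ identifies $J_{2}$ with $J_{3}$, so the task reduces to estimating $J_{1}$ and $J_{2}$.

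For $J_{1}$ I relabel $u=\xi-\eta$, $v=\xi-\zeta$, so
\[
J_{1}=\int\frac{\dd u\,\dd v}{\langle u\rangle^{2\alpha}\,\langle v\rangle^{2\alpha}\,(1+2|\langle \xi-u,\xi-v\rangle|)^{2\rho}}.
\]
For fixed $v\neq\xi$, I decompose $u=s\hat b+t\hat b^{\perp}$ along $\hat b=(\xi-v)/|\xi-v|$. The inner product becomes the affine quantity $c-s|\xi-v|$ for a scalar $c=c(\xi,v)$, and integrating out the transverse component $t$ uses $\alpha>1/2$ to give
\[
\int_{\RR}\frac{\dd t}{(1+s^{2}+t^{2})^{\alpha}}=C_{\alpha}(1+s^{2})^{1/2-\alpha}.
\]
After the rescaling $w=c-s|\xi-v|$ the remaining 1D integral is a convolution $(f_{R}*g)(c)$ with $f_{R}(y)=(1+y^{2}/R^{2})^{1/2-\alpha}$, $g(w)=(1+2|w|)^{-2\rho}$ and $R=|\xi-v|$. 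Young's inequality $\|f_{R}*g\|_{\infty}\le\|f_{R}\|_{q}\|g\|_{q'}$ with Hölder conjugates $q,q'$ bounds this uniformly in $c$ by $R^{1/q-1}$, provided $q>1/(2\alpha-1)$ (possible since $\alpha>1/2$) and $q'>1/(2\rho)$ (automatic from $\rho>1/2$ and $\alpha+\rho>1$, a consequence of the hypotheses).

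The outer integration is then reduced to $\int_{\RR^{2}}\langle v\rangle^{-2\alpha}|\xi-v|^{1/q-1}\,\dd v$: the singularity at $v=\xi$ is locally integrable in $\RR^{2}$ since $1/q-1>-2$, while at infinity the integrand decays like $|v|^{1/q-1-2\alpha}$, integrable in $\RR^{2}$ precisely when $2\alpha-1>1/q$, which is again the choice $q>1/(2\alpha-1)$. The piece $J_{2}$ is handled by the analogous substitution $u=\xi-\eta-\zeta$, $v=\xi-\zeta$, under which $\eta=v-u$ and $\langle \eta,\zeta\rangle=\langle v-u,\xi-v\rangle$; the denominators take exactly the form treated above and the same chain of estimates applies.

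The main obstacle is the coupling factor $(1+2|\langle \eta,\zeta\rangle|)^{-2\rho}$, which only provides decay transverse to the resonant set $\{\langle \eta,\zeta\rangle=0\}$ of codimension one. The orthogonal decomposition turns this transverse decay into the Jacobian factor $|\xi-v|^{-1}$, and matching it against the Sobolev weights $\langle u\rangle^{-2\alpha}$, $\langle v\rangle^{-2\alpha}$ is exactly what forces the twin thresholds $\alpha>1/2$ and $\rho>1/2$ (with $\alpha+\rho>1$ as the corresponding Young/conjugate condition).
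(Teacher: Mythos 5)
Your proof is correct and follows essentially the same route as the paper's: the Peetre splitting $\langle\xi\rangle^{2\alpha}\lesssim\sum_i\langle\xi_i\rangle^{2\alpha}$ reducing $J$ to two integrals, the decomposition of the inner variable into components parallel and perpendicular to the resonant direction $\xi-v$, transverse integrability from $\alpha>1/2$ and longitudinal decay from $\rho>1/2$ yielding a factor $|\xi-v|^{1/q-1}$ (the paper obtains $|\xi-v|^{-1}$ by simply discarding the longitudinal part of the weight), and the same final two-dimensional integral. The H\"older/Young convolution step is only a minor technical variation on the paper's decoupled estimate.
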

\begin{proof}
Inserting the  estimate
$
 (1+|\xi|^2)^{\alpha} \lesssim  \sum_{i=1}^3 (1+|\xi_i|^2)^{\alpha}
$
we obtain that $J=J_1 + J_2$ where
$$
 J_1=\sup_{\xi\in\RR^2} \int_{\substack{\xi_1,\xi_2,\xi_3 \in\RR^2\\ \xi=-\xi_1+\xi_2+\xi_3}}  (1+2|\langle \xi -\xi_2,\xi - \xi_3\rangle|)^{-2\rho} (1+|\xi_2|^2)^{-\alpha} (1+|\xi_3|^2)^{-\alpha} \dd \xi_2 \dd \xi_3 
$$
and
$$
 J_2=\sup_{\xi\in\RR^2} \int_{\substack{\xi_1,\xi_2,\xi_3 \in\RR^2\\ \xi=-\xi_1+\xi_2+\xi_3}}  (1+2|\langle \xi -\xi_2,\xi - \xi_3\rangle|)^{-2\rho} (1+|\xi_2|^2)^{-\alpha} (1+|\xi_1|^2)^{-\alpha} \dd \xi_2 \dd \xi_3 
$$
Let us consider first the $J_1$ contribution. Let $q_i = \xi-\xi_i$, $i=2,3$
$$
 J_1=\sup_{\xi\in\RR^2} \int_{\RR^2}  \frac{\dd q_2}{ (1+|\xi + q_2|^2)^{\alpha}}  \int_{\RR^2} \frac{\dd q_3} { (1+2|\langle q_2,q_3\rangle|)^{2\rho} (1+|\xi + q_3|^2)^{\alpha}}
$$
Write $q_3^\bot,q_3^\|\in\RR$ for the perpendicular and parallel components of $q_3\in\RR^2$ with respect to $q_2$ and similarly for $\xi$ and bound
$$
 J_1\le\sup_{\xi\in\RR^2} \int_{\RR^2}  \frac{\dd q_2}{ (1+|\xi + q_2|^2)^{\alpha}}  \int_{\RR} \frac{\dd q^\bot_3}{(1+|\xi^\bot + q^\bot_3|^2)^{\alpha}} \int_{\RR} \frac{\dd q^\|_3} { (1+2|q_2||q^\|_3|)^{2\rho} }
$$
$$
 = \sup_{\xi\in\RR^2} \int_{\RR^2}  \frac{\dd q_2}{ (1+|\xi + q_2|^2)^{\alpha}}  \int_{\RR} \frac{\dd q^\bot_3}{(1+|q^\bot_3|^2)^{\alpha}} \int_{\RR} \frac{\dd q^\|_3} { (1+2|q_2||q^\|_3|)^{2\rho} }
$$
now note that for $\alpha > 1/2$ and $\rho > 1/2$ we have
$$
\int_{\RR} \frac{\dd q^\bot_3}{(1+| q^\bot_3|^2)^{\alpha}} \int_{\RR} \frac{\dd q^\|_3} { (1+2|q_2||q^\|_3|)^{2\rho} } \lesssim |q_2|^{-1}
$$
so that
$$
 J_1\lesssim\sup_{\xi\in\RR^2} \int_{\RR^2}  \frac{\dd q_2}{ (1+|\xi + q_2|^2)^{\alpha} |q_2|} <+\infty
$$
for $\alpha>1/2$.  To estimate the $J_2$ integral we rewrite it as 
$$
 J_2=\sup_{\xi\in\RR^2} \int_{\RR^2} \int_{\RR^2} (1+2|\langle \xi_1 -\xi_2,\xi - \xi_2\rangle|)^{-2\rho} (1+|\xi_2|^2)^{-\alpha} (1+|\xi_1|^2)^{-\alpha} \dd \xi_2 \dd \xi_1 
$$
where we used that $\xi-\xi_3 = \xi_2-\xi_1$. By writing $q_1 = \xi_1 -\xi_2$ and $q_2 = \xi-\xi_2$ we get
$$
 J_2=\sup_{\xi\in\RR^2} \int_{\RR^2} \frac{\dd q_2}{(1+|\xi-q_2|^2)^{\alpha}} \int_{\RR^2}\frac{\dd q_1}{ (1+2|\langle q_1,q_2\rangle|)^{2\rho}  (1+|q_1+\xi-q_2|^2)^{\alpha} }
$$
Write $q_1^\bot,q_1^\|$ for the perpendicular and parallel components of $q_1$ with respect to $q_2$ to get the estimate
$$
 J_2\le \sup_{\xi\in\RR^2} \int_{\RR^2} \frac{\dd q_2}{(1+|\xi-q_2|^2)^{\alpha}} \int_{\RR^2}\frac{\dd q^\bot_1 \dd q^\|_1}{ (1+2| q_1^\|| |q_2|)^{2\rho}  (1+|q^\bot_1+\xi^\bot|^2)^{\alpha} }
$$
again the condition $\alpha > 1/2$ allows to bound this last quantity as
$$
\lesssim \sup_{\xi\in\RR^2} \int_{\RR^2} \frac{\dd q_2}{(1+|\xi-q_2|^2)^{\alpha}} \int_{\RR}\frac{ \dd q^\|_1}{ (1+2| q_1^\|| |q_2|)^{2\rho} }
$$
and $\rho > 1/2$ subsequently by
$$
\lesssim \sup_{\xi\in\RR^2} \int_{\RR^2} \frac{\dd q_2}{(1+|\xi-q_2|^2)^{\alpha}|q_2|}
$$
which is finite when $\alpha > 1/2$.
\end{proof}

\begin{theorem}
\label{th:2d-reg}
For all $\rho>1/2$ there exists $\gamma>1/2$ such that for all $T>0$ the operator $X$ belongs to $\cC^\gamma([0,T],\LL_3 H^\alpha(\RR^2))$ for all $\alpha>1/2$. 
\end{theorem}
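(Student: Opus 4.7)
The plan is to assemble the theorem directly from the two computations already in place: the Fourier-side representation of $X_{s,t}$ as a trilinear integral against $\Phi^w_{s,t}$, and the finiteness of the supremum $J$ established in the preceding lemma. First I would fix $\rho>1/2$ and, using Hypothesis~\ref{hyp:main}, select $\gamma>1/2$ such that $w$ is $(\rho,\gamma)$-irregular; this $\gamma$ will be the H\"older exponent appearing in the conclusion.

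Next I would estimate the sesquilinear pairing $|\langle\psi,X_{s,t}(\psi_1,\psi_2,\psi_3)\rangle|$ by Cauchy--Schwarz on the constraint surface $\xi=-\xi_1+\xi_2+\xi_3$, inserting weights $(1+|\xi|^2)^{-\alpha/2}$, $(1+|\xi_i|^2)^{\alpha/2}$ to separate the $L^2$--type factors $\|\psi\|_{-\alpha}\prod_i\|\psi_i\|_\alpha$ from the kernel. After this manipulation the remaining supremum is exactly
\begin{equation*}
\sup_\xi \int (1+2|\langle\xi-\xi_2,\xi-\xi_3\rangle|)^{-2\rho}(1+|\xi|^2)^\alpha\prod_{i=1,2,3}(1+|\xi_i|^2)^{-\alpha}\,\dd\xi_2\,\dd\xi_3,
\end{equation*}
which, combined with the $(\rho,\gamma)$-irregularity bound $|\Phi^w_{s,t}(\Xi)|\lesssim\|\Phi^w\|_{\cW^{\rho,\gamma}_T}|t-s|^\gamma(1+|\Xi|)^{-\rho}$, is precisely $J$ from the previous lemma. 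Invoking that lemma gives $J<\infty$ for $\alpha>1/2$ and $\rho>1/2$, so by duality of $H^\alpha$ with $H^{-\alpha}$ I obtain
\begin{equation*}
\|X_{s,t}\|_{\LL_3 H^\alpha(\RR^2)}\lesssim\|\Phi^w\|_{\cW^{\rho,\gamma}_T}|t-s|^\gamma.
\end{equation*}

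Finally, to conclude membership in $\cC^\gamma([0,T],\LL_3 H^\alpha)$ rather than just a H\"older increment bound, I would note that the trilinear operator $X_t$ is by construction additive in time (it is the integral of $(U^w_s)^{-1}\cN(U^w_s\,\cdot)$ up to time $t$) so that $X_t-X_s=X_{s,t}$, and continuity at $t=0$ is trivial since $X_0=0$. The main technical step in this plan is already done, namely the estimate on $J$ in Lemma~\ref{th:2d-reg}'s preceding computation; the only mildly delicate point is handling the symmetry-breaking decomposition $(1+|\xi|^2)^\alpha\lesssim\sum_i(1+|\xi_i|^2)^\alpha$ cleanly so that the dual factor $(1+|\xi|^2)^{-\alpha}\|\psi\|_{-\alpha}$ comes out correctly, but this is the same arithmetic already performed in the one-dimensional proposition and does not introduce any new obstacle.
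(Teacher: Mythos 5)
Your proposal is correct and follows essentially the same route as the paper: the paper's proof of this theorem consists precisely of the Fourier-side Cauchy--Schwarz estimate $|\langle \psi, X_{s,t}(\psi_1,\psi_2,\psi_3)\rangle| \le J^{1/2}|t-s|^\gamma\|\psi\|_{-\alpha}\prod_i\|\psi_i\|_\alpha$ together with the preceding lemma establishing $J<\infty$ for $\alpha>1/2$, $\rho>1/2$. Your additional remark on passing from the increment bound to membership in $\mathcal{C}^\gamma([0,T],\mathcal{L}_3 H^\alpha(\mathbb{R}^2))$ via additivity of $X_{s,t}$ is implicit in the paper and unobjectionable.
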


\subsection{The derivative NLS equation}
\label{sec:dnls}

Here we will focus on the modulated Derivative Nonlinear Schr\"odinger equation (ie:  $A=i\partial^2$ and $\mathcal N(u)=i (-i\partial)^{\theta}(|u|^2-||u||_2^2)u$ for $\theta>0$. Now the Fourier transform of the operator associated to this equation is given by
\begin{equation}\label{eq:dNLS}
\hat X_{t}(\psi_1,\psi_2,\psi_3)=i(k)^{\theta}\sum_{\star} \psi_1(k_1)^{*}\psi_2(k_2)\psi_3(k_3)\Phi^{w}_{t}(2(k-k_2)(k-k_3))
\end{equation}
where the star under the sum means that we have $-k_1+k_2+k_3=k$ and $k_2\ne k$,$k_3\ne k$
, $k_1k_2k_3\ne0$. Standard application of Cauchy-Schwartz gives
$$
||X_{s;t}||_{\cL_3 H^{\alpha}}^2\leq\sup_{k\ne0}|k|^{2\alpha+2\theta}\sum_{\star}|k_1k_2k_3|^{-2\alpha}|\Phi^{w}_{s;t}(2(k-k_2)(k-k_3))|^2
$$
then using the fact that $w$ is $(\gamma,\rho)$ irregular we obtain  
\begin{equation*}
||X_{s; t}||^2_{\cL_3 H^{\alpha}}\lesssim \|\Phi^w\|_{\mathcal{W}^{\rho,\gamma}_T} |t-s|^{\gamma}\sup_k|k|^{2+2\alpha}\sum_{\star} |k_1k_2k_3|^{-2\alpha}|k-k_2|^{-2\rho}|k-k_3|^{-2\rho} 
\end{equation*}
with $s,t\in[0,T]$, then is sufficient to prove that  
$$
I=\sup_k|k|^{2\alpha+2\theta}\sum_{*}|k_1k_2k_3|^{-2\alpha}|k-k_3|^{-2\rho}|k-k_2|^{-2\rho}<+\infty
$$
for that we will need the following lemma.
\begin{lemma}\label{lemma:sum-bound}
For $\rho>\max(1/2,\theta/2)$  and $\alpha\geq\frac{1}{2}\theta$  then the following inequality holds: 
\begin{equation*}
\sum_{l\ne0,k}|l|^{-2\alpha}|k-l|^{-2\rho}\lesssim|k|^{-\theta}
\end{equation*}
\end{lemma}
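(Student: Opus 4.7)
The plan is to estimate the sum by a standard dyadic/split argument: separate the contributions of $l$ close to $0$ from those close to $k$, using whichever of the two factors $|l|^{-2\alpha}$ or $|k-l|^{-2\rho}$ is ``safe'' to factor out in each regime.

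First, I would write $\sum_{l\neq 0,k} = S_1 + S_2$, where
\begin{equation*}
S_1 = \sum_{\substack{l\neq k \\ |l|\geq |k|/2}} |l|^{-2\alpha}|k-l|^{-2\rho}, \qquad S_2 = \sum_{\substack{0<|l|<|k|/2}} |l|^{-2\alpha}|k-l|^{-2\rho}.
\end{equation*}
In $S_1$, the factor $|l|^{-2\alpha}$ is $\lesssim |k|^{-2\alpha}$, and since $\rho>1/2$ the residual sum $\sum_{m\neq 0}|m|^{-2\rho}$ converges, so $S_1\lesssim |k|^{-2\alpha}\lesssim |k|^{-\theta}$ by the hypothesis $\alpha\geq \theta/2$. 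In $S_2$, we have $|k-l|\geq |k|/2$, so $|k-l|^{-2\rho}\lesssim |k|^{-2\rho}$, whence
\begin{equation*}
S_2 \lesssim |k|^{-2\rho}\sum_{0<|l|<|k|/2}|l|^{-2\alpha}.
\end{equation*}

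The second step is to bound the remaining partial sum. If $2\alpha>1$ the series is absolutely convergent and $S_2\lesssim |k|^{-2\rho}\lesssim |k|^{-\theta}$ since $\rho>\theta/2$. If $2\alpha\leq 1$, the partial sum is of order $|k|^{1-2\alpha}$ (with a logarithmic factor at equality, which is harmless up to an arbitrarily small loss in the exponent and does not affect the conclusion, or one can absorb the strict inequality $\rho>\theta/2$ to absorb the log). Hence $S_2\lesssim |k|^{1-2\alpha-2\rho}$, and the desired bound $|k|^{1-2\alpha-2\rho}\lesssim |k|^{-\theta}$ is equivalent to $2\rho+2\alpha-1\geq \theta$, which follows from $2\rho>1$ and $2\alpha\geq \theta$.

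Combining $S_1$ and $S_2$ gives the announced inequality. I do not expect any substantial obstacle here: the whole content is choosing the correct splitting at scale $|k|/2$ and checking that the interplay between the hypotheses $\rho>\max(1/2,\theta/2)$ and $\alpha\geq \theta/2$ provides exactly the margin needed to control both the convergent-series regime and the borderline regime where $2\alpha\leq 1$. The only mild subtlety worth recording is that the critical case $2\alpha=1$ produces a logarithmic factor; this is absorbed by the strict inequality $\rho>1/2$ (which gives a strictly positive gap $2\rho-1>0$ to spend).
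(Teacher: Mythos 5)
Your argument is correct, and the basic strategy is the same as the paper's: split the sum into the region where $l$ is comparable to $k$ (so that $|l|^{-2\alpha}\lesssim|k|^{-2\alpha}\le|k|^{-\theta}$ and the remaining factor sums because $2\rho>1$) and the complementary region where $|k-l|\gtrsim|k|$. The only real difference is in the second region: you pull out the full power $|k|^{-2\rho}$ and then sum the truncated series $\sum_{0<|l|<|k|/2}|l|^{-2\alpha}$, which forces the case distinction $2\alpha>1$ versus $2\alpha\le 1$ and the logarithmic borderline at $2\alpha=1$. The paper instead splits the exponent of the second factor, writing $|k-l|^{-2\rho}\le |k-l|^{-\theta}|l|^{-(2\rho-\theta)}\lesssim|k|^{-\theta}|l|^{-(2\rho-\theta)}$ on the set $|k-l|\ge|l|$, which leaves the single convergent series $\sum_{l}|l|^{-2\alpha-2\rho+\theta}$ (convergent because $2\alpha+2\rho-\theta\ge 2\rho>1$) and so dispenses with both the case analysis and the logarithm in one stroke. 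Both routes use exactly the hypotheses $\rho>\max(1/2,\theta/2)$ and $2\alpha\ge\theta$, and your observation that the strict inequality $2\rho>1$ absorbs the log at the critical exponent is the right way to close the only delicate point in your version.
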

\begin{proof}
\begin{equation*}
\begin{split}
\sum_{l\ne k,0}|l|^{-2\alpha}|k-l|^{-2\rho}&=\sum_{l\ne0,k;|l-k|\leq|l|}|l|^{-2\alpha}|k-l|^{-2\rho}+\sum_{l\ne 0,k;|k-l|\geq|l|}|l|^{-2\alpha}|k-l|^{-2\rho}
\\&\leq|k|^{-\theta}\sum_{l\ne0}\frac{1}{|l|^{2\rho}}+|k|^{-\theta}\sum_{l}\frac{1}{|l|^{2\alpha+2\rho-\theta}}
\lesssim_{\theta,\alpha,\rho}\frac{1}{|k|^{\theta}} .
\end{split}
\end{equation*}
\end{proof}
\begin{lemma}
Let $\rho>\max(\theta,1/2)$ and $\alpha\geq\frac{1}{2}\theta$  then $I<+\infty$.
\end{lemma}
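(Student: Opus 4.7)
The plan is to bound $I$ using Lemma~\ref{lemma:sum-bound} as the sole analytic input, after a combinatorial split that distributes the prefactor $|k|^{2\alpha+2\theta}$ across the three summation variables. Write $|k|^{2\alpha+2\theta} = |k|^{2\theta}\,|k|^{2\alpha}$, and use the elementary inequality
\[
|k|^{2\alpha} \;\lesssim\; |k_1|^{2\alpha}+|k_2|^{2\alpha}+|k_3|^{2\alpha},
\]
valid since $k=-k_1+k_2+k_3$ and $\alpha\ge 0$. This decomposes $I\lesssim I^{(1)}+I^{(2)}+I^{(3)}$ according to which of $|k_j|^{2\alpha}$ is chosen, and cancels the corresponding $|k_j|^{-2\alpha}$ factor in the summand.

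The easy piece is $I^{(1)}$, where the $|k_1|^{-2\alpha}$ factor disappears. Here the sums over $k_2$ and $k_3$ decouple, because after fixing $k$ the constraint $-k_1+k_2+k_3=k$ just determines $k_1$, and the summand factorises as $(|k_2|^{-2\alpha}|k-k_2|^{-2\rho})(|k_3|^{-2\alpha}|k-k_3|^{-2\rho})$. Two applications of Lemma~\ref{lemma:sum-bound} (valid since $\rho>\max(\theta,1/2)\ge\max(1/2,\theta/2)$ and $\alpha\ge\theta/2$) give each factor a bound of $|k|^{-\theta}$, so $I^{(1)}\lesssim |k|^{2\theta}\cdot|k|^{-2\theta}=1$.

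For $I^{(2)}$ (and by the $k_2\leftrightarrow k_3$ symmetry of the summand $|k_1k_2k_3|^{-2\alpha}|k-k_2|^{-2\rho}|k-k_3|^{-2\rho}$, also $I^{(3)}$), the $|k_2|^{-2\alpha}$ factor is absent and we use the identity $k-k_2=k_3-k_1$ which follows from the constraint. Reparametrising the sum by the independent variables $(k_1,k_3)$ transforms $I^{(2)}$ into
\[
|k|^{2\theta}\sum_{k_1,k_3}|k_1|^{-2\alpha}|k_3|^{-2\alpha}|k_3-k_1|^{-2\rho}|k-k_3|^{-2\rho}.
\]
The inner sum over $k_1$ is handled by Lemma~\ref{lemma:sum-bound} with $l=k_1$ and ``$k$''$=k_3$, yielding a factor $|k_3|^{-\theta}$. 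The remaining sum over $k_3$ is $\sum_{k_3}|k_3|^{-(2\alpha+\theta)}|k-k_3|^{-2\rho}$, to which we apply Lemma~\ref{lemma:sum-bound} a second time but with the effective exponent $\alpha':=\alpha+\theta/2$ and target decay $\theta':=2\theta$: the hypotheses $\rho>\max(1/2,\theta'/2)=\max(1/2,\theta)$ and $\alpha'\ge\theta'/2$ (i.e.\ $\alpha\ge\theta/2$) are exactly those supplied, and deliver a bound $|k|^{-2\theta}$. Combined with the prefactor this gives $I^{(2)}\lesssim 1$, and the same for $I^{(3)}$.

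The main subtlety is the second application of Lemma~\ref{lemma:sum-bound} with a larger target exponent $\theta'=2\theta$: this is the step that consumes the full strength of the hypothesis $\rho>\theta$ (rather than the weaker $\rho>\theta/2$ used in the lemma itself), and explains why the local estimates on $X$ for dNLS require $\rho>\max(\theta,1/2)$ instead of just $\rho>1/2$. Everything else is a reduction to the previous lemma, combined with the triangle-type inequality for the prefactor and the change of summation variables.
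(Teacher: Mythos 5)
Your proof is correct and follows essentially the same route as the paper: the splitting via $|k|^{2\alpha}\lesssim|k_1|^{2\alpha}+|k_2|^{2\alpha}+|k_3|^{2\alpha}$, the factorisation of the decoupled piece, the substitution $k-k_2=k_3-k_1$, and two successive uses of Lemma~\ref{lemma:sum-bound}. The only (cosmetic) difference is that for the final sum $\sum_{k_3}|k_3|^{-2\alpha-\theta}|k-k_3|^{-2\rho}$ you re-invoke Lemma~\ref{lemma:sum-bound} with the shifted parameters $(\alpha+\theta/2,\,2\theta)$, whereas the paper redoes the case split $|k-k_3|\le|k_3|$ versus $|k-k_3|\ge|k_3|$ by hand; both amount to the same computation and consume the hypothesis $\rho>\theta$ at exactly the same point.
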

\begin{proof}
If we use the fact that $|k|^{2\alpha}\lesssim|k_1|^{2\alpha}+|k_2|^{2\alpha}+|k_3|^{2\alpha}$ we obtain 
\begin{equation*}
\begin{split}
I&\lesssim
\sup_k|k|^{2\theta}\left(\sum_{*}|k_2k_3|^{2\alpha}|k-k_2|^{-2\rho}|k-k_3|^{-2\rho}+\sum_{*}|k_1k_2|^{-2\alpha}|k-k_2|^{-2\rho}|k_2-k_1|^{-2\rho}\right) 
\\&\lesssim\sup_k|k|^{2\theta}\left(\sum_{k_2\ne0,k}|k_2|^{-2\alpha}|k-k_2|^{-2\rho}\right)^{2}+\sup_k|k|^{2\theta}\sum_{*}|k_1k_2|^{-2\alpha}|k-k_2|^{-2\rho}|k_2-k_1|^{-2\rho} 
\\&=I_1+I_2
\end{split}
\end{equation*}
Now by the Lemma~\ref{lemma:sum-bound} we have
$$
I_1=\sup_k|k|^{2\theta}(\sum_{k_2\ne0,k}|k_2|^{-2\alpha}|k-k_2|^{-2\rho})^{2}<+\infty
$$
for $\rho>\max(\theta,1/2)$, $\alpha>\frac{1}{2}\theta$.
It remains to treat the second term which requires a bit more work:
\begin{equation*}
\begin{split}
I_{2}&=\sup_k|k|^{2\theta}\sum_{k_1,k_2}|k_2k_1|^{-2\alpha}|k_2-k|^{-2\rho}|k_2-k_1|^{-2\rho}
\\&=\sup_k|k|^{2\theta}\sum_{k_2}|k_2|^{-2\alpha}|k-k_2|^{-2\rho}\sum_{k_1}|k_1|^{-2\alpha}|k_2-k_1|^{-2\rho}\lesssim\sup_k|k|^{2\theta}\sum_{k_2}|k_2|^{-2\alpha-\theta}|k-k_2|^{-2\rho}
\\&\lesssim\sup_k|k|^{2\theta}(\sum_{k_2;|k-k_2|\leq|k_2|}|k_2|^{-2\alpha-\theta}|k-k_2|^{-2\rho}+\sum_{k_2;|k-k_2|\geq|k_2|}|k_2|^{-2\alpha-\theta}|k-k_2|^{-2\rho})
\\&\lesssim\sup_k(|k|^{\theta-2\alpha})+\sup_k\sum_{k_2;|k-k_2|\geq|k_2|}|k_2|^{-2\alpha-\theta}|k-k_2|^{-2\rho+2\theta}
\\&\lesssim \sup_k(|k|^{\theta-2\alpha})+\sum_{k_2\ne0}|k_2|^{-2\alpha-2\rho+\theta}<+\infty
\end{split}
\end{equation*}
if  $\rho>\max(1/2,\theta) $, $\alpha\geq\frac{1}{2}\theta$.
\end{proof}
\begin{theorem}\label{th:rg-DNLS}
Let $\theta>0$, $\rho>\max(1/2,\theta)$ then there exist $\gamma>1/2$ such that  the operator $X$ defined the formula~\eqref{eq:dNLS} satisfies $X\in  C^{\gamma}([0,T],\LL_3 (H^{\alpha}))$ for all $T>0$ and $\alpha\geq\frac{1}{2}\theta$.
\end{theorem}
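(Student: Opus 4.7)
The statement simply collects the bookkeeping already prepared by the two preceding lemmas, so my plan is to assemble them into a single H\"older estimate on the trilinear map $X_{st}\colon (H^\alpha)^{\otimes 3}\to H^\alpha$.

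First I would fix $s<t$ in $[0,T]$ and $\psi_1,\psi_2,\psi_3,\psi \in H^\alpha$ (taking $\psi$ as a test function via duality on $H^\alpha$). Writing the pairing $\langle \psi, X_{st}(\psi_1,\psi_2,\psi_3)\rangle$ as a quadruple Fourier sum using the explicit form
$$
\widehat{X_{st}(\psi_1,\psi_2,\psi_3)}(k) = (ik)^\theta \sum_{\star}\hat\psi_1(k_1)^{\ast}\hat\psi_2(k_2)\hat\psi_3(k_3)\,\Phi^w_{st}\!\big(2(k-k_2)(k-k_3)\big),
$$
I would insert factors of $\langle k\rangle^\alpha\langle k_i\rangle^{-\alpha}$ for $i=1,2,3$ and bound the product $\prod_i|\hat\psi_i(k_i)|\langle k_i\rangle^\alpha$ in $\ell^2$ by Cauchy--Schwarz. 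This reduces matters to controlling the scalar quantity
$$
\sup_{k}|k|^{2\alpha+2\theta}\sum_{\star}|k_1k_2k_3|^{-2\alpha}\big|\Phi^w_{st}(2(k-k_2)(k-k_3))\big|^2 .
$$

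Next I would apply the $(\rho,\gamma)$-irregularity of $w$ pointwise, replacing $|\Phi^w_{st}(a)|^2$ with $\|\Phi^w\|_{\mathcal W_T^{\rho,\gamma}}^2|t-s|^{2\gamma}(1+|a|)^{-2\rho}$. Factoring $|t-s|^{2\gamma}$ outside, what remains is exactly the supremum $I$ studied just above the theorem statement, and the second lemma of the subsection shows that $I<+\infty$ whenever $\rho>\max(1/2,\theta)$ and $\alpha\ge\theta/2$. Taking square roots yields
$$
\|X_{st}\|_{\mathcal L_3 H^\alpha} \lesssim_{w,T} |t-s|^\gamma
$$
with $\gamma>1/2$, which is the desired H\"older regularity.

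The only mildly delicate point in the whole argument is the bound on $I$, but this is precisely what the two auxiliary lemmas establish. The key trick there is the splitting $|k|^{2\alpha}\lesssim|k_1|^{2\alpha}+|k_2|^{2\alpha}+|k_3|^{2\alpha}$, which separates the sum $I$ into a symmetric piece that directly reduces (via Lemma~\ref{lemma:sum-bound}) to a convolution of type $\sum |k_2|^{-2\alpha}|k-k_2|^{-2\rho}\lesssim |k|^{-\theta}$, and an asymmetric piece $I_2$ where one sums first in $k_1$, then in $k_2$, at each step reinvesting the $|k|^{-\theta}$ decay granted by Lemma~\ref{lemma:sum-bound}. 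Since $X_{st}$ is already bounded on $(H^\alpha)^{\otimes 3}$ as a limit of the Galerkin-truncated operators $X^L_{st}$, and the above estimate is uniform in the truncation, the time-continuity in $\mathcal L_3 H^\alpha$ follows with the exponent $\gamma$ announced, completing the proof.
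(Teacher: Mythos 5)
Your proposal is correct and follows essentially the same route as the paper: Cauchy--Schwarz with the weights $\langle k\rangle^\alpha\langle k_i\rangle^{-\alpha}$ reduces everything to the supremum $I$, the $(\rho,\gamma)$-irregularity extracts the factor $|t-s|^{2\gamma}$ and replaces $|\Phi^w_{st}|^2$ by the product $|k-k_2|^{-2\rho}|k-k_3|^{-2\rho}$, and the finiteness of $I$ is exactly the content of the two auxiliary lemmas via the splitting $|k|^{2\alpha}\lesssim|k_1|^{2\alpha}+|k_2|^{2\alpha}+|k_3|^{2\alpha}$. The only (harmless) addition is your closing remark about uniformity over Galerkin truncations, which the paper does not spell out at this point.
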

\begin{remark}
Taking $\theta=1$ and $\rho>1$ we obtain  the existence and uniqueness of a local solution for the modulated dNLS equation as stated in Theorem~\ref{th:nse-others} (provided as usual that $w$ is $\rho$--irregular).
\end{remark}

\section{Modulated Strichartz estimates and  NLS on $\RR$}
\label{sec:strich-NLS}
In this section we study the modulated Schr\"odinger equation with general sub--critical non linearity, i.e.  $A=i\partial^2$ and $\mathcal N(\phi)= i |\phi|^\mu \phi$ where $\mu$ can be any real number in $[0,4]$. In the case of Brownian modulation this equation has already been studied in~\cite{Debussche2011363}  using a stochastic Strichartz type estimate for $\mu \le 4$. The goal of this section is to remark that their result generalizes easily to an arbitrary $\rho$-irregular path. In order to prove Theorem~\ref{theorem:strichartz-w} we will follow their strategy and obtain a preliminary estimate which involves computations similar to those used in the study of the $X$ operator for the cubic NLS in Lemma~\ref{th:reg-1d}.
\begin{theorem}\label{proposition: Strich}
Let $\alpha\in[0,1]$ and $\rho>0$ such that $0\leq\alpha<1$ and $\rho>\alpha+1/2$ or $\alpha=1$ and $\rho>1$ then there exists $\gamma>1/2$ such that we have 
$$
\int_{0}^{T}\dd t\left|\left|D^{\frac{\alpha}{2}}\left|\int_{0}^{t}U_t^{w}(U_s^{w})^{-1}\psi_s\dd s\right|^2\right|\right|^2_{L^{2}(\mathbb R)}\lesssim ||\Phi^w||_{\mathcal W_{\rho,T}^{\gamma}}T^{\gamma}\left|\left|\psi\right|\right|^4_{L^1([0,T],L^2(\mathbb R))}
$$
for every $\psi\in L^1([0,T],L^2(\mathbb R))$ and for all $T>0$. 
\end{theorem}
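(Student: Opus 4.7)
The strategy is to expand the left-hand side by Plancherel, swap the $t$-integration with the $s$-integrations coming from $u(t) = \int_0^t U^w_t(U^w_s)^{-1}\psi_s \dd s$, and thereby isolate an occupation-measure factor $\Phi^w$ that will be estimated by $\rho$-irregularity. The remaining spatial integral will be recognised as the one already treated in Lemma~\ref{lemma=Sch-estimates}.

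Concretely, since $A=i\partial^2$, the operator $U^w_t$ acts in Fourier as multiplication by $e^{-i\xi^2 w_t}$, so $\hat u(t,\xi) = e^{-i\xi^2 w_t}\int_0^t e^{i\xi^2 w_s}\hat\psi_s(\xi)\dd s$. Writing $|u|^2 = u\bar u$ as a convolution on the Fourier side gives $\widehat{|u|^2}(t,\xi) = \int \hat u(t,\eta)\,\overline{\hat u(t,\eta-\xi)}\,\dd\eta$, and by Plancherel
$$
\int_0^T\!\!\dd t\,\|D^{\alpha/2}|u|^2\|_{L^2}^2 = \int_0^T\!\!\dd t\int\dd\xi\,|\xi|^{\alpha}\,|\widehat{|u|^2}(t,\xi)|^2.
$$
Introducing variables $\xi_1=\eta,\xi_2=\eta-\xi$ in $\widehat{|u|^2}$ and $\xi_3=\tilde\eta,\xi_4=\tilde\eta-\xi$ in its conjugate, the modulus squared becomes a quadruple integral under the constraint $\xi_1-\xi_2-\xi_3+\xi_4=0$ with integrand containing the phase $e^{i\Phi w_t}\cdot e^{i(\xi_1^2 w_{s_1}-\xi_2^2 w_{s_2}-\xi_3^2 w_{s_3}+\xi_4^2 w_{s_4})}$. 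A direct computation gives $(\eta-\xi)^2-\eta^2-(\tilde\eta-\xi)^2+\tilde\eta^2 = 2\xi(\tilde\eta-\eta)$, so that on the constraint $\Phi = 2(\xi_1-\xi_2)(\xi_3-\xi_1) = 2(\xi_1-\xi_2)(\xi_4-\xi_2)$.

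Now I swap orders of integration. The region $\{0\le s_i \le t\le T\}$ rewrites as $\{(s_1,\dots,s_4)\in[0,T]^4,\ \max_i s_i \le t\le T\}$, and all $t$-dependence is carried by the single factor $e^{i\Phi w_t}$; hence the $t$-integration produces
$$
\int_{\sigma}^{T} e^{i\Phi w_t}\dd t = \Phi^{w}_{\sigma,T}(\Phi),\qquad \sigma = \max_i s_i.
$$
Applying $(\rho,\gamma)$-irregularity bounds this by $\|\Phi^w\|_{\mathcal W^{\rho,\gamma}_T}\,T^\gamma\,(1+|\Phi|)^{-\rho}$, and since $|\hat\psi_{s_j}|$ is non-negative we may absorb absolute values throughout.

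What remains is, for each fixed $(s_1,\dots,s_4)$, the three-dimensional integral (after using the delta to eliminate $\xi_3 = \xi_1-\xi_2+\xi_4$)
$$
\int\dd\xi_1\dd\xi_2\dd\xi_4\,|\xi_1-\xi_2|^\alpha\,(1+2|(\xi_1-\xi_2)(\xi_4-\xi_2)|)^{-\rho}\prod_{j=1}^{4}|\hat\psi_{s_j}(\xi_j)|.
$$
Relabelling $(x_1,x_2,x_3) = (\xi_2,\xi_1,\xi_4)$, this is exactly $\mathcal I(\alpha)$ of Lemma~\ref{lemma=Sch-estimates} with $|\Phi^w_{s_1 s_2}(\cdot)|$ replaced by $(1+|\cdot|)^{-\rho}$ (that is, the $|s_2-s_1|^\gamma\|\Phi^w\|$ factor has already been extracted). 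The proof of that lemma, which partitions $\mathbb R^3$ by whether $|x_2-x_1|$ and $|x_3-x_1|$ are $\le 1$ or $\ge 1$ and relies only on the pointwise decay $(1+|\cdot|)^{-\rho}$, therefore delivers the bound $\lesssim\prod_{j=1}^4\|\psi_{s_j}\|_{L^2}$ under the stated conditions $\alpha\in[0,1)$, $\rho>\alpha+1/2$, or $\alpha=1$, $\rho>1$. Integrating this in $(s_1,\dots,s_4)\in[0,T]^4$ yields precisely $\|\psi\|_{L^1([0,T],L^2)}^4$, and the proof is complete.

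The only genuinely subtle point is the algebraic simplification of the phase $\Phi = 2(\xi_1-\xi_2)(\xi_4-\xi_2)$ modulo the constraint, because the $\rho$-irregularity is only useful once the oscillating factor in $w_t$ is recognised as the Fourier transform at a scalar parameter quadratic in the frequencies. Everything after that — the Fubini swap, applying the irregularity bound, matching the integral with Lemma~\ref{lemma=Sch-estimates} — is bookkeeping in the spirit of the cubic NLS analysis of Proposition~\ref{th:reg-1d}.
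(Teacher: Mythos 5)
Your proof is correct and follows essentially the same route as the paper: expand in Fourier variables, use Fubini so that the $t$-integration over $[\max_i s_i,T]$ produces the factor $\Phi^w_{\max_i s_i,\,T}\bigl(2(\xi_1-\xi_2)(\xi_4-\xi_2)\bigr)$, and reduce the remaining spatial integral to Lemma~\ref{lemma=Sch-estimates}. The only cosmetic difference is that you extract the decay $(1+|\cdot|)^{-\rho}$ before invoking the lemma, whereas the paper applies the lemma as stated with the $\Phi^w$ factor inside; note that in the region where both frequency differences are small the lemma's proof actually uses the trivial bound $|\Phi^w_{s,t}(a)|\le|t-s|$ rather than the decay, but since $(1+|\cdot|)^{-\rho}\le 1$ that region remains trivially finite and your variant goes through.
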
 
\begin{proof}
By going to Fourier variables we can see that :
\begin{equation*}
\begin{split}
\int_{0}^{T}\dd t\left|\left|D^{\frac{\alpha}{2}}\left|\int_{0}^{t}U_t^{w}(U_s^{w})^{-1}\psi_s\dd s\right|^2\right|\right|^2_{L^{2}(\mathbb R)}= & \int_{0}^{T}\dd t\int_{[0,t]^4}\dd s_1\dd s_2\dd s_3\dd s_4\int_{\mathbb R^3}\dd x_1\dd x_2\dd x_3 \times
\\& \times (e^{-i\phi}|x_2-x_1|^{\alpha}|\hat\psi_{s_1}(x_1)||\hat\psi^*_{s_2}(x_2)||(\hat\psi_{s_3})^*(x_3)||\hat\psi^*_{s_4}(x_4)|)
\end{split}
\end{equation*}
where $x_4=-x_1+x_2+x_3$, $\phi=x_1^2(w_t-w_{s_1})-x_2^2(w_t-w_{s_2})-x_3^2(w_t-w_{s_3})+x_4^2(w_t-w_{s_4})$. Let us split the integral over $(s_1,s_2,s_3,s_4)$ in four region where $s_i=\max(s_1,s_2,s_3,s_4)$ for $i=1,...,4$. Consider for example the first region where $s_1>s_2,s_3,s_4$. Using Fubini we can see that this integral is given by  
$$
\mathcal I=\int_{0}^{T}\dd s_1\int_{[0,s_1]^{3}}\int_{\mathbb R^3}\dd x_1\dd x_2\dd x_3(\int_{s_1}^{T}e^{-i\phi}\dd t)|x_2-x_1|^{\alpha}|\hat\psi_{s_1}(x_1)||\hat\psi^*_{s_2}(x_2)||(\hat\psi_{s_3})^*(x_3)||(\hat\psi^*_{s_4})^{*}(x_4)|
$$
and 
$$
\left|\int_{s_1}^{T}e^{-i\phi}\dd t\right|=\left|\int_{s_1}^{T}e^{2i(x_{2}-x_{1})(x_{3}-x_{1})(w_{t}-w_{s_1})}\dd t\right|=\left|\Phi^{w}_{s_1T}(2(x_2-x_1)(x_3-x_1))\right|.
$$
Then we have to bound the following integral 
\begin{equation*}
\begin{split}
I(\alpha)&=\int_{\mathbb R^3}\dd x_1\dd x_2\dd x_3|x_2-x_1|^{\alpha}|\Phi^{w}_{s_1T}(2(x_2-x_1)(x_3-x_1))||\hat\psi_{s_1}(x_1)||\hat\psi^*_{s_2}(x_2)||(\hat\psi_{s_3})^*(x_3)||(\hat\psi^*_{s_4})^*(x_4)|.
\end{split}
\end{equation*}
An application of Lemma~\ref{lemma=Sch-estimates} shows that
$
\mathcal I\lesssim T^{\gamma}(\int_{0}^{T}|\psi_{s}|_{L^{2}(\mathbb R)}\dd s)^4
$
what concludes the proof.
\end{proof}
The following  Gagliardo-Nirenberg-type inequality allows to transform the regularity gain of the previous proposition into an integrability result of Strichartz's type.
\begin{lemma}\label{lemma:Gagliardo-Nirenberg}
Let $p>2$ and $\eps>0$ then there exist $C=C(\eps,p)$ such that for all $f\in L^1(\mathbb R)\cap\mathcal H_{s}$ the following inequality holds :
$$
||f||_{L^p(\mathbb R)}\leq C||f||_{L^1(\mathbb R)}^{1-\theta}||f||_{\mathcal H_s}^{\theta}
$$
where $\mathcal H_s$ is the homogenous Sobolev space on $\RR$, $s=\frac{1}{2}-\frac{1}{p}+\frac{\eps}{2}$ and $\theta=\frac{2p-2}{(2+\eps)p-2}\in(0,1)$ 
\end{lemma}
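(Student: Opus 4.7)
The plan is to derive this estimate as a classical Gagliardo--Nirenberg interpolation via a Fourier-space splitting at a well-chosen scale. Since $p>2$, the conjugate exponent $p'=p/(p-1)$ lies in $(1,2)$, so the Hausdorff--Young inequality applies in the dual form $\|f\|_{L^p(\mathbb R)}\le\|\hat f\|_{L^{p'}(\mathbb R)}$. The remaining task is therefore to bound $\|\hat f\|_{L^{p'}}$ by interpolating between the two available pieces of information on $\hat f$: the elementary estimate $\|\hat f\|_{L^\infty}\le\|f\|_{L^1}$ coming from the assumption $f\in L^1(\mathbb R)$, and the fact that $|\xi|^{s}\hat f\in L^2(\mathbb R)$ with $L^2$-norm equal to $\|f\|_{\mathcal H_s}$ by definition of the homogeneous Sobolev space.

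The main step is a high/low frequency cutoff at $|\xi|=R>0$. On the low-frequency region, the bound $|\hat f|\le\|f\|_{L^1}$ gives $\int_{|\xi|\le R}|\hat f|^{p'}\,\dd\xi\lesssim R\,\|f\|_{L^1}^{p'}$. On the high-frequency region, H\"older's inequality applied with exponents $2/p'$ and $2/(2-p')$ to the factorisation $|\hat f|^{p'}=(|\xi|^{s}|\hat f|)^{p'}|\xi|^{-sp'}$ yields
$$
\int_{|\xi|>R}|\hat f|^{p'}\,\dd\xi\le\|f\|_{\mathcal H_s}^{p'}\left(\int_{|\xi|>R}|\xi|^{-2sp'/(2-p')}\,\dd\xi\right)^{(2-p')/2}.
$$
The tail integral converges precisely when $2sp'/(2-p')>1$, a condition that reduces, after elementary manipulation using $p'=p/(p-1)$, to $s>1/2-1/p$, which is exactly what the hypothesis $\eps>0$ guarantees. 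Evaluating the tail integral then produces a contribution of the form $R^{1-p'(1/2+s)}\|f\|_{\mathcal H_s}^{p'}$ with a strictly negative power of $R$ (again using $p'>1$ and $\eps>0$).

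The final step is to optimise over $R$ by balancing the two contributions, which forces the choice $R\sim(\|f\|_{\mathcal H_s}/\|f\|_{L^1})^{1/(1/2+s)}$. Substituting this back into both terms and taking $p'$-th roots produces an inequality of the form $\|f\|_{L^p}\lesssim\|f\|_{L^1}^{1-\theta}\|f\|_{\mathcal H_s}^{\theta}$ with $\theta=2/(p'(1+2s))$. Using $p'=p/(p-1)$ and $1+2s=2-2/p+\eps$, a short computation turns this into $\theta=(2p-2)/((2+\eps)p-2)$, matching the claimed exponent. I do not expect any substantive obstacle: the argument is entirely classical and robust, and the only real care needed is the bookkeeping of exponents and the verification that the various integrability/sign conditions all reduce precisely to $\eps>0$.
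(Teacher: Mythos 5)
Your proposal is correct, and the exponent bookkeeping checks out: the tail integral $\int_{|\xi|>R}|\xi|^{-2sp'/(2-p')}\dd\xi$ converges exactly when $s>\frac12-\frac1p$, i.e.\ when $\eps>0$, the two contributions scale as $R^{1}$ and $R^{1-p'(1/2+s)}$ with exponents of opposite sign, and balancing them gives $\theta=\frac{2}{p'(1+2s)}=\frac{2p-2}{(2+\eps)p-2}$ as claimed. The route is, however, genuinely different from the paper's. You work entirely on the Fourier side: Hausdorff--Young in the dual form $\|f\|_{L^p}\le\|\hat f\|_{L^{p'}}$ (valid since $p>2$ forces $p'\in(1,2)$), a sharp cutoff of $\hat f$ at $|\xi|=R$, the trivial bound $\|\hat f\|_{L^\infty}\le\|f\|_{L^1}$ below the cutoff, H\"older against the weight $|\xi|^{-sp'}$ above it, and optimisation over $R$. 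The paper instead decomposes $f$ into Littlewood--Paley blocks on the physical side, applies Bernstein's inequality $\|\Delta_i f\|_{L^p}\lesssim 2^{i(1/2-1/p)}\|\Delta_i f\|_{L^2}$ block by block, sums via Cauchy--Schwarz using the $\eps$-room in $s$, obtains first the additive inequality $\|f\|_{L^p}\lesssim\|f\|_{L^1}+\|f\|_{\mathcal H_s}$, and then recovers the multiplicative form by the dilation $f_\lambda(x)=f(\lambda x)$. Your sharp cutoff at scale $R$ plays exactly the role of the paper's scaling parameter $\lambda$, and the two optimisations are equivalent. What your version buys is elementarity (no Littlewood--Paley theory, only Plancherel, H\"older and Hausdorff--Young) and a fully explicit constant; what the paper's version buys is a template that adapts more directly to variants where the low-frequency control is not an $L^1$ bound on $\hat f$ but some other block-wise estimate. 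The only point worth making explicit in a written-up version is that since $f\in L^1$, $\hat f$ is a bounded continuous function, so the pointwise bound on the low-frequency region and the identification $f=(\hat f)^{\vee}$ needed for the dual Hausdorff--Young step are both legitimate.
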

\begin{proof}
We begin by decomposing $f$ in standard Littlewood-Paley blocks $f=\sum_{i>-1}\Delta_if$ and then
\begin{equation}\label{eq:L-P-decomp}
||f||_{L^p(\mathbb R)}\leq||\Delta_{-1}f||_{L^{p}(\mathbb R)}+\sum_{i\geq0}||\Delta_if||_{L^p(\mathbb R)}.
\end{equation}
Bernstein's inequality then gives
$
||\Delta_if||_{L^p(\mathbb R)}\lesssim2^{i(\frac{1}{2}-\frac{1}{p})}||\Delta_if||_{L^2(\mathbb R)}
$ and
then summing this last equation over $i\geq0$ and using Jensen inequality we can see that 
\begin{equation*}
\begin{split}
\sum_{i\geq0}||\Delta_if||_{L^p(\mathbb R)}&\lesssim \sum_{i\geq0}2^{i(\frac{1}{2}-\frac{1}{p})}||\Delta_if||_{L^2(\mathbb R)}
\lesssim \sum_{i\geq0}(2^{i(\frac{1}{2}-\frac{1}{p}+\frac{\eps}{2})}||\Delta_if||_{L^2(\mathbb R)})2^{-i\frac{\eps}{2}}
\\&\lesssim_{\eps} \left(\sum_{i\geq0}2^{2i(\frac{1}{2}-\frac{1}{p}+\frac{\eps}{2})}||\Delta_if||^2_{L^2(\mathbb R)}\right)^{1/2}
\lesssim_{\eps}||f||_{\mathcal H_s}
\end{split}
\end{equation*}
and then we have 
$
||f||_{L^p(\mathbb R)}\lesssim ||f||_{L^1(\mathbb R)}+||f||_{\mathcal H_s}
$.
Now if we put $f_{\lambda}(x)=f(\lambda x)$ in this last inequality we can see that 
$$
||f||_{L^p(\mathbb R)}\lesssim \lambda^{-1+1/p}||f||_{L^1(\mathbb R)}+\lambda^{\eps/2}||f||_{\mathcal H_s}
$$
then to have our result is suffices to take $\lambda=(||f||^{-1}_{L^1(\mathbb R)}||f||_{\mathcal H_s})^{\frac{2p}{2-(2+\eps)p}}$.
\end{proof}
\begin{proof}[Proof of Theorem~\ref{theorem:strichartz-w}]
Starting with Lemma~\ref{lemma:Gagliardo-Nirenberg} and taking $\alpha=1$ and $\rho>1$ in  Th.~\ref{proposition: Strich} we obtain :
\begin{equation*}
\begin{split}
& \left|\left|\int_{0}^{.}U^{w}_{.}(U_{s}^{w})^{-1}\psi_s\dd s\right|\right|^{p}_{L^{p}([0,T],L^{2p}(\mathbb R))}=\int_{0}^{T}\dd t\left|\left|\left|\int_{0}^{t}U^{w}_{t}(U_{s}^{w})^{-1}\psi_s\dd s\right|^2\right|\right|^{p/2}_{L^p(\mathbb R)}
\\&\qquad\lesssim \int_{0}^{T}\dd t\left|\left|\left|\int_{0}^{t}U^{w}_{t}(U_{s}^{w})^{-1}\psi_s\dd s\right|^2\right|\right|^{1/2}_{L^1(\mathbb R)}\left|\left|D^{\frac{1}{2}}\left|\int_{0}^{t}U^{w}_{t}(U_{s}^{w})^{-1}\psi_s\dd s\right|^2\right|\right|^{\frac{p-1}{2}}_{L^2(\mathbb R)}
\\&\qquad\lesssim C_{w}T^{\frac{\gamma(p-1)}{4}+\frac{5-p}{4}}\left|\left|\int_{0}^{.}U_{.}^{w}(U^{w}_{s})^{-1}\psi_s\dd s\right|\right|_{L^{\infty}([0,T],L^2(\mathbb R))}\left(\int_{0}^{T}||\psi_s||_{L^2(\mathbb R)}\dd s\right)^{p-1}.
\end{split}
\end{equation*}
Now is suffice to remark that 
\begin{equation*}
\begin{split}
\left|\left|\int_{0}^{.}U_{.}^{w}(U^{w}_{s})^{-1}\psi_s\dd s\right|\right|_{L^{\infty}([0,T],L^2(\mathbb R))}&\leq\sup_{0\leq t\leq T}\int_{0}^{T}\left|\left|U^{w}_{t}(U^{w}_{s})^{-1}\psi_s\right|\right|_{L^{2}(\mathbb R)}\dd s
\leq\int_{0}^{T}||\psi_s||_{L^2(\mathbb R)}\dd s
\end{split}
\end{equation*}
and then  
$$
\left|\left|\int_{0}^{.}U^{w}_{.}(U_{s}^{w})^{-1}\psi_s\dd s\right|\right|^p_{L^{p}([0,T],L^{2p}(\mathbb R))}\leq C_{w}T^{\frac{\gamma(p-1)}{4}+\frac{5-p}{4}}\left(\int_{0}^{T}||\psi_s||_{L^2(\mathbb R)}\dd s\right)^{p}
$$
when $p\in(2,5]$. Now in the case $p\in[2,4)$ we obtain the same result if we use the Lemma~\ref{lemma:Gagliardo-Nirenberg} and take $\alpha=1-\frac{2}{p}+\eps\in(0,1/2]$, $\rho>\alpha+\frac{1}{2}$ in Th.~\ref{proposition: Strich}. 
\end{proof}
To have all the ingredients needed for the fixed point argument we have to estimates the action of the operator
$U^{w}$ on the initial condition.
\begin{proposition}\label{proposition:strich-estim-3}
Let $T>0$, $p=\mu+1\in(4,5]$, $\rho>\min(\frac{3}{2}-\frac{2}{p})$ then then there exist constant $C_{p}$  and $\gamma^{\star}(p)>0$ such that the following inequality holds :
$$
\left|\left|U^{w}_{t}\psi\right|\right|_{L^{p}([0,T],L^{2p}(\mathbb R))}\leq C_{p}||\Phi||_{\mathcal W^{\rho,\gamma}_T}T^{\gamma^{\star}(p)}||\psi||_{L^2(\mathbb R)}
$$
for all $\psi\in L^{2}(\mathbb R)$.
\end{proposition}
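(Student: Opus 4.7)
The plan is to mirror the proof of Theorem~\ref{theorem:strichartz-w}, but now starting from the free evolution $|U^w_t\psi|^2$ instead of the Duhamel integral. The payoff of working directly with the free evolution is that, since $U^w_t$ is an $L^2$-isometry, $\||U^w_t\psi|^2\|_{L^1(\RR)}=\|\psi\|_{L^2(\RR)}^2$ is \emph{constant} in $t$, which makes the Gagliardo--Nirenberg step much cleaner than for the Duhamel term.

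\textbf{Step 1 (Interpolation plus H\"older).} Apply Lemma~\ref{lemma:Gagliardo-Nirenberg} to $f=|U^w_t\psi|^2$ at each fixed time $t$ with the choice $\eps=2/p$, which forces $s=1/2$ and $\theta=(p-1)/p$. After raising to the $p/2$-th power and using unitarity to replace $\||U^w_t\psi|^2\|_{L^1}$ by $\|\psi\|_{L^2}^2$, one obtains
$$
\|U^w_t\psi\|_{L^{2p}(\RR)}^p\lesssim \|\psi\|_{L^2(\RR)}\,\|D^{1/2}|U^w_t\psi|^2\|_{L^2(\RR)}^{(p-1)/2}.
$$
Since $(p-1)/2\le 2$ for $p\le 5$, H\"older's inequality in $t$ yields
$$
\|U^w_t\psi\|^p_{L^p([0,T],L^{2p}(\RR))}\lesssim \|\psi\|_{L^2(\RR)}\,T^{(5-p)/4}\left(\int_0^T\|D^{1/2}|U^w_t\psi|^2\|_{L^2(\RR)}^{2}\dd t\right)^{(p-1)/4}.
$$

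\textbf{Step 2 (Key bilinear estimate).} Prove the homogeneous analogue of Theorem~\ref{proposition: Strich}:
$$
\int_0^T\|D^{1/2}|U^w_t\psi|^2\|_{L^2(\RR)}^2\dd t\lesssim \|\Phi^w\|_{\mathcal W^{\rho,\gamma}_T}T^\gamma\|\psi\|_{L^2(\RR)}^4\qquad(\rho>1).
$$
To do so, expand $\widehat{|U^w_t\psi|^2}(\xi)$ as a convolution of $\hat\psi$ with $\overline{\hat\psi}$; the phase factor is $e^{-2iw_t\xi(\xi_1-\xi_2)}$, so after squaring and integrating in $t$ the time integral produces $\Phi^w_{0,T}(-2\xi(\xi_1-\xi_2))$. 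The linear change of variables $x_1=\xi_1-\xi$, $x_2=\xi_1$, $x_3=\xi_2-\xi$ has Jacobian~$1$ and converts the resulting triple integral into \emph{exactly} the integrand controlled by Lemma~\ref{lemma=Sch-estimates} with $\alpha=1$, $s_1=0$, $s_2=T$ and all four $\psi_i=\psi$. The critical case $\alpha=1$ requires exactly $\rho>1$, which matches the hypothesis of the proposition.

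\textbf{Conclusion and main obstacle.} Combining Steps 1 and 2 gives
$$
\|U^w_t\psi\|_{L^p([0,T],L^{2p}(\RR))}^p\lesssim T^{((5-p)+\gamma(p-1))/4}\|\psi\|_{L^2(\RR)}^p,
$$
and taking the $p$-th root yields the desired estimate with $\gamma^\star(p)=\frac{(5-p)+\gamma(p-1)}{4p}>0$, positive because $\gamma>1/2$ and $p\le 5$. The only genuinely delicate step is Step~2: one must recognise $|U^w_t\psi|^2$ as a quadrilinear expression in $\hat\psi$ whose time-integrated Fourier kernel has precisely the structure $|\Phi^w_{0,T}(2(x_2-x_1)(x_3-x_1))|$ that Lemma~\ref{lemma=Sch-estimates} is designed to handle. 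Once this identification is made, the $\rho$-irregularity assumption performs all the analytic work and no new estimate beyond the ones already developed in the paper is needed.
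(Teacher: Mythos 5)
Your proof is correct and follows essentially the same route as the paper's: apply the Gagliardo--Nirenberg interpolation of Lemma~\ref{lemma:Gagliardo-Nirenberg} to $|U^w_t\psi|^2$, control the $L^1$ factor by unitarity of $U^w_t$, use H\"older in time, and reduce $\int_0^T\|D^{1/2}|U^w_t\psi|^2\|_{L^2(\RR)}^2\,\dd t$ via the Fourier representation to the quadrilinear bound of Lemma~\ref{lemma=Sch-estimates} with $\alpha=1$, $\rho>1$. Your explicit change of variables in Step~2 is the right one and in fact fixes a small typo in the paper's stated phase (which should read $\eta=2(x_2-x_1)(x_3-x_1)$, up to an irrelevant sign).
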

\begin{proof}
Let us begin by using the Lemma~\ref{lemma:Gagliardo-Nirenberg} and then 
\begin{equation*}
\begin{split}
\left|\left|U^{w}_{t}\psi\right|\right|^p_{L^{p}([0,T],L^{2p}(\mathbb R))}&=\int_{0}^{T}\dd t\left|\left|\left|U^{w}_t\psi\right|^2\right|\right|_{L^p(\mathbb R)}^{p/2}
\lesssim||U^{w}\psi||^{(1-\theta)p}_{L^{\infty}([0,T],L^{2}(\mathbb R))}\int_{0}^{T}\dd t\left|\left|D^{\alpha/2}\left|U^{w}_t\psi\right|^2\right|\right|_{L^{2}(\mathbb R)}^{\theta \frac{p}{2}}
\end{split}
\end{equation*}
where $\theta=\frac{2p-2}{(2+\eps)p-2}$ then it suffices to bound the quantity 
$\|D^{\alpha/2}\left|U^{w}_t\psi\right|^2\|^2_{L^2}$ and to proceed as the in the Proposition~\ref{theorem:strichartz-w}. By a simple computation we have 
\begin{equation*}
\begin{split}
\left|\left|D^{\alpha/2}\left|U^{w}_t\psi\right|^2\right|\right|^2=\int_{\mathbb R^3}\dd x_1\dd x_2\dd x_3|x_2-x_1|^{\alpha}|\Phi^{w}_{0T}(\eta)||\hat\psi(x_1)||\hat\psi(x_2)||\hat\psi(x_3)||\hat\psi(-x_1+x_2+x_3)|
\end{split}
\end{equation*}
where $\eta=2(x_2-x_1)(x_3-x_2)$.
Now applying again Proposition~\ref{proposition: Strich} we concludes the proof.
\end{proof}
We are now ready to prove Theorem~\ref{th:Strich-existence} about existence of local solution to the modulated NLS with general non-linearity.
\begin{proof}[Proof of Theorem~\ref{th:Strich-existence}]
In this proof we use essentially the the same argument of~\cite{Debussche2011363} to obtain the local and global existence. Let us define for $\psi\in L^{p}([0,T],L^{2p})$ the following map :
$$
\Gamma(\psi)_t=U^{w}_tu^{0}+i\int_{0}^{t}U^{w}_t(U^{w}_{s})^{-1}(|\psi_s|^{\mu} \psi_s)\dd s.
$$
We will prove that $\Gamma $ is a strict contraction in a suitable ball of $L^{p}([0,T],L^{2p}(\mathbb R))$. Let
$$
B_{r}=\left\{\psi\in L^{p}([0,T],L^{2p}(\mathbb R)), ||\psi||_{L^{p}([0,T],L^{2p}(\mathbb R))}\leq r\right\},
$$
using Proposition~\ref{proposition:strich-estim-3} and Theorem~\ref{theorem:strichartz-w} we have 
\begin{equation*}
\begin{split}
\left|\left|\Gamma(\psi)\right|\right|_{L^{p}([0,T],L^{2p}(\mathbb R))}&\leq||U_{.}^{w}u^{0}||_{L^{p}([0,T],L^{2p}(\mathbb R))}+\left|\left|\int_{0}^{.}U^{w}_{.}(U^{w}_{s})^{-1}(|\psi_s|^{\mu}\psi_s)\dd s\right|\right|_{L^{p}([0,T],L^{2p}(\mathbb R))} 
\\&\leq C_{w,T}T^{\gamma^{\star}(p)}\left(||u^0||_{L^2(\mathbb R)}+\int_{0}^{T}\left|\left|\left|\psi_s\right|^{\mu}\psi_s\right|\right|_{L^{2}(\mathbb R)}\dd s\right)
\\&\leq C_{w,T}T^{\gamma^{\star}(p)} (||u^{0}||_{L^2(\mathbb R)}+||\psi||_{L^{p}([0,T],L^{2p}(\mathbb R))}^p)
\\&\leq C_{w,T}T^{\gamma^{\star}(p)}(||u^0||_{L^{2}(\mathbb R)}+r^{p})
\end{split}
\end{equation*}
then we can choose $T_1$ small enough such for all $T\leq T_1$   the equation $r_T=C_{w}T^{\gamma^{\star}(p)}(||u^0||_{L^{2}(\mathbb R)}+r_T^{p})
$ admits a positive solution $r=r_T$. Now for $T<T_1$ and  $\psi_1,\psi_2\in L^{p}([0,T],L^{2p}(\mathbb R))\cap B_r$ we see by the same argument used previously that we have 
\begin{equation*}
\begin{split}
\left|\left|\Gamma(\psi_1)-\Gamma(\psi_2)\right|\right|_{L^{p}([0,T],L^{2p}(\mathbb R))}&=\left|\left|\int_{0}^{.}U^{w}_{.}(U^{w}_s)^{-1}(\left|\psi_1\right|^{\mu}\psi_1-\left|\psi_2\right|^{\mu}\psi_2)\dd s\right|\right|_{L^{p}([0,T],L^{2p}(\mathbb R))}
\\&\leq C_{w,T}T^{\gamma^{\star}(p)}r^{p-1}\left|\left|\psi_1-\psi_2\right|\right|_{L^{p}([0,T],L^{2p}(\mathbb R))}.
\end{split}
\end{equation*}
Choosing $T_2<T_1$ such that for all $T<T_2$ we have $||\Phi||_{\mathcal W^{\gamma,\rho}_{T}}T^{\gamma^{\star}(p)}r^{p-1}<1$ which implies that $\Gamma$ is a strict contraction in the ball $L^{p}([0,T_2],L^{2p}(\mathbb R))\cap B_r$ and therefore has a unique fixed point in this ball. The proof of uniqueness is standard. Now the fact that $u\in C([0,T_2],L^2(\mathbb R))$ is simply given by the inequality
$$
||u_t-u_s||_{L^2(\mathbb R)}\leq||(U^w_t-U_s^w)u^0||_{L^2(\mathbb R)}+\int_{s}^{t}||u_\sigma||_{L^{2p}(\mathbb R)}^p\dd\sigma .
$$
Now we will focus on the proof of the conservation law in the quintic case (i.e.: $p=5$), for simplicity. For values of $1\le p<5$ the argument is similar. Let now $M\in\mathbb N$. By the same argument used in the beginning  of the proof  we can construct a local solution $u^M\in L^5([0,T],L^{10}(\mathbb R))\cap C([0,T],L^2(\mathbb R))$ of the regularized equation. More precisely we have that 
$$
u^M_t=U_t^w\Pi_Mu^0+i\int_{0}^{t}U_t^w(U_s^w)^{-1}\Pi_M\left(|\Pi_Mu^M|^4\Pi_Mu^M\right)\dd s
$$
for all $t\in[0,T]$ and some $T=T(||u^0||_{L^2(\mathbb R)})$. Let $v^M=(U^w_t)^{-1}u_t^M$. A simple computation shows that
$$
||v_t^M-v_s^M||_{L^2(\mathbb R)}\leq\int_{s}^{t}||\Pi_Mu_{\sigma}^{M}||^5_{L^{10}}\dd\sigma\lesssim_M(t-s)||u^M||_{L^{\infty}([0,T],L^{2}(\mathbb R))}
$$
from which  we obtain that 
$$
||v_t^{M}||_{L^2(\RR)}^2=||v^{M}_s||_{L^2(\RR)}^2+2\int_{s}^{t}\mathcal Im\langle v_s, (U^w_\sigma)^{-1}(|U^w_\sigma v_s^M|^4U_\sigma v_s^M)\rangle\dd \sigma+O(|t-s|^2).
$$
It is not difficult to see that $\langle v_s, (U^w_\sigma)^{-1}(|U^w_\sigma v_s^M|^5U_\sigma v_s^M)\rangle\in\mathbb R$ and then $||v^M_t||_{L^2(\RR)}^2=||v^M_s||_{L^2(\RR)}^2+O(|t-s|^2)$ which implies that $||u^M_t||_{L^2(\RR)}=||u^0||_{L^2(\RR)}$. Moreover we have  
\begin{itemize}
\item $\Pi_Mu^M=u^M$ ;
\item for every $T>0$, $\sup_M||u^M||_{L^5([0,T],L^{10}(\mathbb R))}<+\infty$ .
\end{itemize}
Using that we have easily 
$$
||u^M-u||_{L^5([0,T],L^{10}(\mathbb R))}\lesssim T^{\gamma}(||u^0-\Pi_Mu^0||_{L^2}+||u^M-u||_{L^5([0,T],L^{10}(\mathbb R))})
$$
and for $T<\min(T_2,1/2)$ we have that $||u^M-u||_{L^5([0,T],L^{10}(\mathbb R))}\to^{M\to+\infty}0$. It is then  sufficient  to iterate this procedure to extend it to the interval $[0,T_2]$. Now by a simple computation we can see that 
$$
||u^M-u||_{L^\infty([0,T_2],L^2(\mathbb R))}\lesssim ||\Pi_Mu^0-u^0||_{L^2(\RR)}+ ||u^M-u||_{L^5([0,T],L^{10}(\mathbb R))}
$$
and then $||u_t||_{L^2(\mathbb R)}=||u^0||_{L^2(\mathbb R)}$ which gives the conservation law and allow us to extend our local solution in a global solution. Now let $u^0\in H^1$ and using the Strichartz estimates after taking the first derivative of the function $\Gamma(\psi)$ we obtain that 
$$
||\Gamma(\psi)||_{L^5([0,T],W^{1,10}(\mathbb R))}\lesssim_{w}T^{\gamma}(||u^0||_{H^1(\RR)}+r^4||\psi||_{L^5([0,T],W^{1,10}(\mathbb R))})
$$  
with $\psi\in B_r$ where $B_r$ is the ball in which we have setup our point fix argument at the beginning of the proof. Then $B(0,R)$, the ball of radius $R$ in $L^5([0,T],W^{1,10}(\mathbb R))$ is invariant  by $\Gamma_{B_r}$ the restriction of $\Gamma$ on $B_r$  for $T_3=T$ depending only on $r$ and not $R$. Since closed balls of $L^5([0,T],W^{1,10}(\mathbb R))$ are closed also in $L^5([0,T],L^{1,10}(\mathbb R))$  the fixed point of $\Gamma_{B_r}$ is in $L^5([0,T],W^{1,10}(\mathbb R))$  and  we obtain  that $u\in L^5([0,T_3],W^{1,10}(\mathbb R))$. Now an argument similar to that used above in the $L^2$ case gives that  $u\in C([0,T_3],H^{1}(\mathbb R))$ since $T_3$ depends only on $r$, by using the conservation of the $L^2$ norm, we can repeat the argument in the interval $[T_3, 2T_3]$ and so on to conclude that $u\in C(\RR^+,H^{1}(\mathbb R))$.
\end{proof}
The following lemma show that our solution correspond to that obtain by  de~Bouard and Debussche in~\cite{deBouard20101300}.
\begin{lemma}
\label{lemma:reg}
Let $u$ the solution constructed in the Theorem~\ref{th:Strich-existence} then $u\in L^{p+1}_{loc}([0,+\infty),L^{p+1}(\mathbb R))$. In particular if $w$ is a Brownian motion, the solution $u$ coincides with that studied by De~Bouard and Debussche in~\cite{deBouard20101300}. 
\end{lemma}
\begin{proof}
Simply use the Cauchy-Schawrtz inequality :
$$
||u_t||^{p+1}_{L^p(\mathbb R)}\leq||u_t||_{L^2(\mathbb R)}||u_t||^p_{L^{2p}(\mathbb R)}
$$
and then integrating this inequality for $t\in[0,T]$ and using the $L^2$ conservation law get 
$$
||u||^{p+1}_{L^{p+1}([0,T],L^{p+1}(\mathbb R))}\leq||u^0||_{L^2(\mathbb R)}||u||^p_{L^{p}([0,T],L^{2p}(\mathbb R))}
$$
and thus $\in L^{p+1}_{loc}([0,+\infty),L^{p+1}(\mathbb R))$.
Now when $w$ is a Bronwian motion, $u^0\in L^2(\mathbb R)$ and $\sigma<2$ let us recall that in~\cite{deBouard20101300} the authors show the existence of a unique solution $v$ of the equation 
\begin{equation}
\label{eq:dd}
v(t)=U_t^wu^0+\int_0^tU^w_t(U^w_s)^{-1}(|v_s|^{2\sigma}v_s)\dd s
\end{equation}
which satisfy $v\in L^{2\sigma+2}_{loc}([0,+\infty),L^{2\sigma+2}(\mathbb R)$ almost surely. Taking $p=2\sigma+1$ we can see with this notation that the solution $u$ constructed in Theorem~\ref{th:Strich-existence} satisfies the equation~\eqref{eq:dd} almost surely and according to the Lemma~\ref{lemma:reg} it satisfies also  $u\in L^{2\sigma+2}_{loc}([0,+\infty),L^{2\sigma+2}(\mathbb R)$ which  by uniqueness implies that the two solution coincide (i.e.: $u=v$).
\end{proof}

\begin{bibdiv}
    \begin{biblist}
    
\bib{abdullaev_soliton_2000}{article}{
	title = {Soliton perturbations and the random Kepler problem},
	volume = {135},
	issn = {0167-2789},
	url = {http://www.ams.org/mathscinet-getitem?mr=1731507},
	doi = {10.1016/S0167-2789(99)00118-9},
	number = {3-4},
	journal = {Physica D. Nonlinear Phenomena},
	author = {Abdullaev, F. Kh.},
	author = {Bronski, J. C.},
	author = {Papanicolaou, G.},
	year = {2000},
	pages = {369--386},
}

\bib{attanasio_renormalized_2011}{article}{
	title = {Renormalized Solutions for Stochastic Transport Equations and the Regularization by Bilinear Multiplicative Noise},
	volume = {36},
	issn = {0360-5302},
	url = {http://www.tandfonline.com/doi/abs/10.1080/03605302.2011.585681},
	doi = {10.1080/03605302.2011.585681},
	number = {8},
	journal = {Communications in Partial Differential Equations},
	author = {Attanasio, S.},
	author = {Flandoli, F.},
	year = {2011},
	pages = {1455--1474},
}

\bib{babin-kdv}{article}{
	title = {On the regularization mechanism for the periodic Korteweg-de Vries equation},
	volume = {64},
	issn = {0010-3640},
	url = {http://dx.doi.org/10.1002/cpa.20356},
	doi = {10.1002/cpa.20356},
	number = {5},
	journal = {Communications on Pure and Applied Mathematics},
	author = {Babin, Anatoli V.},
	author =  {Ilyin, Alexei A.},
	author =  {Titi, Edriss S.},
	year = {2011},
	pages = {591--648}
}

\bib{bessaih_evolution_2005}{article}{
	Author = {Bessaih, H.},
	author = {Gubinelli, M.},
	author = {Russo, F.},
	Issn = {0091-1798},
	Journal = {Ann. Probab.},
	Number = {5},
	Pages = {1825--1855},
	Title = {The evolution of a random vortex filament},
	Volume = {33},
	Year = {2005},
}

\bib{bour1}{article}{
title={Fourier transform restriction phenomena for certain lattice subsets and applications to nonlinear evolution equations I,II},
author={J. Bourgain},
journal={Geometric and Functional Analysis GAFA},
year={1993}, 
Volume={3}, 
pages={107-156},
}

\bib{bgt}{article}{
author={N. Burq}, 
author={P. G\'erard},
author={N. Tzvetkov}, 
title={An instability property of the nonlinear Schr?odinger equation on Sd},
journal={Math. Res. Lett.},  
year={2002},  
number={2-3}, 
pages={323Ð335},
}

\bib{brzezniak_global_2010}{article}{
	Author = {Brze{\'z}niak, Z.},
	author =  {Gubinelli, M.},
	author = {Neklyudov, M.},
	Journal = {Preprint},
	Title = {Global evolution of random vortex filament equation},
	Url = {http://arxiv.org/abs/1008.1086},
	Year = {2010},
}

\bib{CatellierGubinelli}{unpublished}{
title = {On the regularization of ODEs via irregular perturbations},
year = {2012},
note = {preprint},
author = {R. Catellier},
author = {M. Gubinelli},
note= {arXiv:1205.1735},
}

\bib{clarke_dispersion_2001}{article}{
	title = {Dispersion management for solitons in a Korteweg~de Vries system},
	volume = {12},
	issn = {10541500},
	url = {http://chaos.aip.org/resource/1/chaoeh/v12/i1/p8_s1?isAuthorized=no},
	doi = {doi:10.1063/1.1429967},
	number = {1},
	journal = {Chaos: An Interdisciplinary Journal of Nonlinear Science},
	author = {Clarke, Simon},
	author =  {Malomed, Boris A},
	author =  {Grimshaw, Roger},
	year = {2001},
	pages = {8--15},
}

\bib{christ_asymptotics_2003}{article}{	
 title = {Asymptotics, frequency modulation, and low regularity ill-posedness for canonical defocusing equations},
	volume = {125},
	issn = {0002-9327},
	url = {http://www.ams.org/mathscinet-getitem?mr=2018661},
	number = {6},
	journal = {American Journal of Mathematics},
	author = {Christ, Michael},
	author = {Colliander, James},
	author = {Tao, Terrence},
	year = {2003},
	pages = {1235--1293},
}

\bib{cct}{article}{
author={M. Christ}, 
author={J. Colliander}, 
author={T. Tao}, 
title={Asymptotics, frequency modulation, and low-regularity illposedness
of canonical defocusing equations}, 
journal={Amer. J. Math.},
year={2003)},
number={6}, 
pages={1235Ð1293}
}

\bib{[CollianderKeelStaffilaniTakaokaTao]}{article}{
 author={Colliander,J},
 author={Keel,G},
 author={Staffilani,G},
 author={Takaoka,H},
 author={Tao.T},  
 title={Sharp global well-posedness for the KDV and modified KDV on $\mathbb R$ and $\mathbb T$},
   journal={Jour. Am. Math. Soc.},
   volume={16},
   date={2003},
   number={3},
   pages={705--749},
   issn={1088-6834},
   review={\MR{1969209 (2004c:35352)}},
   doi={10.1090/S0894-0347-03-00421-1},
}

\bib{Davie}{article}{	
    title = {Uniqueness of solutions of stochastic differential equations},
	issn = {1073-7928},
	url = {http://www.ams.org/mathscinet-getitem?mr=2377011},
	doi = {10.1093/imrn/rnm124},
	number = {24},
	journal = {International Mathematics Research Notices. {IMRN}},
	author = {Davie, A. M.},
	year = {2007},
	pages = {Art. ID rnm124, 26},
}

\bib{de_bouard_effect_2002}{article}{
	title = {On the effect of a noise on the solutions of the focusing supercritical nonlinear Schr\"odinger equation},
	volume = {123},
	issn = {0178-8051},
	url = {http://www.ams.org/mathscinet-getitem?mr=1906438},
	doi = {10.1007/s004400100183},
	number = {1},
	journal = {Probability Theory and Related Fields},
	author = {de Bouard, A.},
	author = {Debussche, A.},
	year = {2002},
	pages = {76--96},
}

\bib{de_bouard_blow-up_2005}{article}{
	title = {Blow-up for the stochastic nonlinear Schr\"odinger equation with multiplicative noise},
	volume = {33},
	issn = {0091-1798},
	url = {http://www.ams.org/mathscinet-getitem?mr=2135313},
	doi = {10.1214/009117904000000964},
	number = {3},
	journal = {The Annals of Probability},
	author = {de Bouard, A.},
	author = {Debussche, A.},
	year = {2005},
	pages = {1078--1110},
}

\bib{deBouard20101300}{article}{
title = {The nonlinear Schr\"odinger equation with white noise dispersion},
journal = {Journal of Functional Analysis},
volume = {259},
number = {5},
pages = {1300--1321},
year = {2010},
note = {},
issn = {0022-1236},
doi = {10.1016/j.jfa.2010.04.002},
url = {http://www.sciencedirect.com/science/article/pii/S0022123610001412},
author = {A. de Bouard},
author = {A. Debussche}
}

\bib{Debussche2011363}{article}{
title = {1D quintic nonlinear Schr\"odinger equation with white noise dispersion},
journal = {Journal de Math\'ematiques Pures et Appliqu\'ees},
volume = {96},
number = {4},
pages = {363--376},
year = {2011},
note = {},
issn = {0021-7824},
doi = {10.1016/j.matpur.2011.02.002},
url = {http://www.sciencedirect.com/science/article/pii/S0021782411000250},
author = {A. Debussche},
author= {Y. Tsutsumi}
}

\bib{DeFlVi}{article}{
  author = {F. Delarue},
  author = { F. Flandoli}, 
  author = { D. Vincenzi},
  title = {Noise prevents collaps of Vlasov-Poisson point charges},
  journal = {Comm. Pure Appl. Math},
  year = {to appear},
 }

\bib{DGT12}{article} {
    AUTHOR = {Deya, A.},
    AUTHOR = {Gubinelli, M.},
    AUTHOR = {Tindel, S.},
     TITLE = {Non-linear rough heat equations},
   JOURNAL = {Probab. Theory Related Fields},
    VOLUME = {153},
      YEAR = {2012},
    NUMBER = {1-2},
     PAGES = {97--147},
}

\bib{fedrizzi_noise_2013}{article}{
	title = {Noise prevents singularities in linear transport equations},
	volume = {264},
	issn = {0022-1236},
	url = {http://www.sciencedirect.com/science/article/pii/S0022123613000049},
	doi = {10.1016/j.jfa.2013.01.003},
	number = {6},
	journal = {Journal of Functional Analysis},
	author = {Fedrizzi, E.},
	author =  {Flandoli, F.},
	year = {2013},
	pages = {1329--1354},
}

\bib{flandoli_interaction_2011}{article}{
	title = {The Interaction Between Noise and Transport Mechanisms in {PDEs}},
	volume = {79},
	issn = {1424-9286, 1424-9294},
	url = {http://link.springer.com/article/10.1007/s00032-011-0164-5},
	doi = {10.1007/s00032-011-0164-5},
	number = {2},
	journal = {Milan Journal of Mathematics},
	author = {Flandoli, F.},
	year = {2011},
	pages = {543--560},
}

\bib{flandoli_random_2011}{book}{
	title = {Random Perturbation of {PDEs} and Fluid Dynamic Models: ƒcole D'\'et\'e de Probabilit\'es de Saint-Flour {XL-2010}},
	isbn = {9783642182303},
	language = {en},
	publisher = {Springer},
	author = {Flandoli, F.},
	year = {2011},
}

\bib{FGP}{article}{
   author={Flandoli, F.},
   author={Gubinelli, M.},
   author={Priola, E.},
   title={Well-posedness of the transport equation by stochastic
   perturbation},
   journal={Invent. Math.},
   volume={180},
   date={2010},
   number={1},
   pages={1--53},
   issn={0020-9910},
   review={\MR{2593276}},
   doi={10.1007/s00222-009-0224-4},
}

\bib{flandoli_full_2011}{article}{
	title = {Full well-posedness of point vortex dynamics corresponding to stochastic {2D} Euler equations},
	volume = {121},
	issn = {0304-4149},
	url = {http://dx.doi.org/10.1016/j.spa.2011.03.004},
	doi = {10.1016/j.spa.2011.03.004},
	number = {7},
	journal = {Stochastic Processes and their Applications},
	author = {Flandoli, F.},
	author = {Gubinelli, M.}, 
	author = {Priola, E.},
	year = {2011},
	pages = {1445Ð1463}
}

\bib{flandoli_remarks_2013}{article}{
	title = {Remarks on the stochastic transport equation with H{\textbackslash}"\{o\}lder drift},
	url = {http://arxiv.org/abs/1301.4012},
	journal = {Rend. Sem. Mat. Univ. Pol. Torino},
	number = {70}, 
	pages = {53--73},
	author = {Flandoli, F.},
	author = {Gubinelli, M.}, 
	author = {Priola, E.},
	year = {2012},
}

\bib{FlMaNe}{article}{
author = {F. Flandoli},
author = { M. Maurelli},
author = { M. Neklyudov},
title = {Regularization by noise of a
stochastic vector advection equation}
journal = {in preparation},
}

\bib{[FrizVictoir]}{book}{
   author={Friz, P. K.},
   author={Victoir, N. B.},
   title={Multidimensional stochastic processes as rough paths},
   series={Cambridge Studies in Advanced Mathematics},
   volume={120},
   note={Theory and applications},
   publisher={Cambridge University Press},
   place={Cambridge},
   date={2010},
   pages={xiv+656},
   isbn={978-0-521-87607-0},
   review={\MR{2604669 (2012e:60001)}},
}

\bib{geman_occupation_1980}{article}{
	title = {Occupation Densities},
	volume = {8},
	issn = {0091-1798},
	url = {http://projecteuclid.org/euclid.aop/1176994824},
	doi = {10.1214/aop/1176994824},
	number = {1},
	journal = {The Annals of Probability},
	author = {Geman, Donald},
	author = {Horowitz, Joseph},
	year = {1980},
	pages = {1--67},
}

\bib{[Gubinelli-2004]}{article}{
   author={Gubinelli, M.},
   title={Controlling rough paths},
   journal={J. Funct. Anal.},
   volume={216},
   date={2004},
   number={1},
   pages={86--140},
   issn={0022-1236},
   doi={10.1016/j.jfa.2004.01.002},
}

\bib{GubinelliKdV}{article}{
	title = {Rough solutions for the periodic Korteweg--de Vries equation},
	volume = {11},
	issn = {1534-0392},
	number = {2},
	journal = {Communications on Pure and Applied Analysis},
	author = {Gubinelli, M.},
	year = {2012},
	pages = {709--733}
}

\bib{gubinelli_regularization_2012}{article}{
	title = {Regularization by noise and stochastic Burgers equations},
	journal = {{arXiv:1208.6551}},
	author = {Gubinelli, M.},
	author = {Jara, M.},
	year = {2012},
}

\bib{gubinelli_young_2006}{article}{
	Author = {Gubinelli, M.},
	author = {Lejay, A.},
	author = {Tindel, S.},
	Journal = {{P}otential {A}nal. },
	Number = {4},
	Pages = {307--326},
	Title = {Young integrals and {SPDEs}},
	Volume = {25},
	Year = {2006},
}

\bib{GL09}{article} {
    AUTHOR = {Gubinelli, M.},
    AUTHOR = {L{\"o}rinczi, J.},
     TITLE = {Gibbs measures on {B}rownian currents},
   JOURNAL = {Comm. Pure Appl. Math.},
    VOLUME = {62},
      YEAR = {2009},
    NUMBER = {1},
     PAGES = {1--56},
      ISSN = {0010-3640},
}

\bib{gubinelli_rough_2010}{article}{
	Author = {Gubinelli, M.},
	author = {Tindel, S.},
	Issn = {0091-1798},
	Journal = {Ann. Probab.},
	Number = {1},
	Pages = {1--75},
	Title = {Rough evolution equations},
	Url = {http://dx.doi.org/10.1214/08-AOP437},
	Volume = {38},
	Year = {2010},
}

\bib{hairer_rough_2011}{article}{
	Author = {Hairer, M.},
	Doi = {10.1002/cpa.20383},
	Issn = {0010-3640},
	Journal = {Communications on Pure and Applied Mathematics},
	Number = {11},
	Pages = {1547--1585},
	Title = {Rough stochastic {PDEs}},
	Url = {http://dx.doi.org/10.1002/cpa.20383},
	Volume = {64},
	Year = {2011},
	}

\bib{hairer_solving_2011}{article}{
	Author = {Hairer, M.},
	Journal = {Ann. of Math.},
	Title = {Solving the {KPZ} equation},
	note = {arXiv:1109.6811},
	Url = {http://arxiv.org/abs/1109.6811},
	status = {to appear},
	year= {2013},
}

\bib{hundertmark_decay_2009}{article}{	title = {Decay estimates and smoothness for solutions of the dispersion managed non-linear Schršdinger equation},
	volume = {286},
	issn = {0010-3616},
	url = {http://www.ams.org/mathscinet-getitem?mr=2472020},
	doi = {10.1007/s00220-008-0612-4},
	number = {3},
	journal = {Communications in Mathematical Physics},
	author = {Hundertmark, Dirk},
	author = {Lee, Young-Ran},
	year = {2009},
	pages = {851--873},
}

\bib{hundertmark_super-exponential_2012}{article}{	title = {Super-exponential decay of diffraction managed solitons},
	volume = {309},
	issn = {0010-3616},
	url = {http://www.ams.org/mathscinet-getitem?mr=2864785},
	doi = {10.1007/s00220-011-1354-2},
	number = {1},
	journal = {Communications in Mathematical Physics},
	author = {Hundertmark, Dirk},
	author = {Lee, Young-Ran},
	year = {2012},
	pages = {1--21},
}

\bib{KT}{incollection}{
	Author = {Kappeler, T.},
	Author = {Topalov, P.},
	Booktitle = {Mathematisches Institut, Georg-August-Universit\"at G\"ottingen: Seminars 2003/2004},
	Pages = {151--155},
	Publisher = {Universit\"atsdrucke G\"ottingen, G\"ottingen},
	Title = {Well-posedness of {K}d{V} on {$H\sp {-1}(\mathbb{T})$}},
	Year = {2004}}

\bib{kunze_ground_2005}{article}{
	title = {Ground states for the higher-order dispersion managed {NLS} equation in the absence of average dispersion},
	volume = {209},
	issn = {0022-0396},
	url = {http://www.sciencedirect.com/science/article/pii/S0022039604003869},
	doi = {10.1016/j.jde.2004.09.014},
	number = {1},
	journal = {Journal of Differential Equations},
	author = {Kunze, Markus},
	author = { Moeser, Jamison},
	author =  {Zharnitsky, Vadim},
	year = {2005},
	keywords = {Dispersion management, Ground states, Nonlinear Schr\"odinger equation},
	pages = {77--100},
}

\bib{kpv}{article}{
author={C. Kenig}, 
author={G. Ponce}, 
author={L. Vega}, 
title={On the ill-posedness of some canonical dispersive equations}, 
journal={Duke Math.J}, 
year={2001}, 
 number={3}, 
 pages={617Ð633},
}	

\bib{lions_stochastic_2012}{article}{
	title = {Stochastic averaging lemmas for kinetic equations},
	url = {http://arxiv.org/abs/1204.0317},
	journal = {{arXiv:1204.0317}},
	author = {Lions, P.-L.},
	author = {Perthame, B.},
	author = {Souganidis, P. E.},
	year = {2012},
}

\bib{Lyons1998}{article}{
	author = {Lyons, Terry J.},
	journal = {Rev. Mat. Iberoam.},
	number = {2},
	pages = {215--310},
	title = {{Differential equations driven by rough signals}},
	volume = {14},
	year = {1998},
}

\bib{[LionsStFlour]}{book}{
   author={Lyons, T. J.},
   author={Caruana, M.},
   author={L{\'e}vy, T.},
   title={Differential equations driven by rough paths},
   series={Lecture Notes in Mathematics},
   volume={1908},
   note={Lectures from the 34th Summer School on Probability Theory held in
   Saint-Flour, July 6--24, 2004;
   With an introduction concerning the Summer School by Jean Picard},
   publisher={Springer},
   place={Berlin},
   date={2007},
   pages={xviii+109},
   isbn={978-3-540-71284-8},
   isbn={3-540-71284-4},
   review={\MR{2314753 (2009c:60156)}},
}

\bib{marty2006}{article}{
	title = {On a splitting scheme for the nonlinear Schr\"odinger equation in a random medium},
	volume = {4},
	issn = {1539-6746},
	url = {http://www.ams.org/mathscinet-getitem?mr=2264815},
	number = {4},
	journal = {Communications in Mathematical Sciences},
	author = {Marty, R.},
	year = {2006},
	pages = {679--705}
}

\bib{gubperime}{unpublished}{
title = {Paracontrolled distributions and singular PDEs},
year = {2012},
note = {arXiv:1210.2684},
author = {N. Perkowski},
author = {M. Gubinelli},
author ={P. Imkeller},
}

\bib{pramanik}{article}{
	title = {L{\textasciicircum}p regularity of averages over curves and bounds for associated maximal operators},
	journal = {Amer. J. Math},
	author = {Pramanik, M.},
	author = {Seeger, A.},
	pages = {61Ð103},
}

\bib{tao_lp_2003}{article}{	title = {Lp improving bounds for averages along curves},
	volume = {16},
	issn = {1088-6834, 0894-0347},
	url = {http://www.ams.org/journals/jams/2003-16-03/S0894-0347-03-00420-X/home.html},
	number = {03},
	journal = {Journal of the American Mathematical Society},
	author = {Tao, Terence},
	author = {Wright, James},
	year = {2003},
	pages = {605--638},
}

\bib{[Young-1936]}{article}{
   author={Young, L. C.},
   title={An inequality of the H\"older type, connected with Stieltjes
   integration},
   journal={Acta Math.},
   volume={67},
   date={1936},
   number={1},
   pages={251--282},
   issn={0001-5962},
   doi={10.1007/BF02401743},
}
\bib{tsu}{article}{
author={Tsutsumi.Y},
title={$\mathrm{L}^{2}$ -Solutions for Nonlinear Schrodinger Equations
and Nonlinear Groups},
journal={Funkcialaj Ekvacioj}, 
volume={30},
date= {1987},
pages={115-125}
}

\bib{zharnitsky_stabilizing_2001}{article}{
	title = {Stabilizing effects of dispersion management},
	volume = {152--153},
	issn = {0167-2789},
	url = {http://www.sciencedirect.com/science/article/pii/S0167278901002135},
	doi = {10.1016/S0167-2789(01)00213-5},
	journal = {Physica D: Nonlinear Phenomena},
	author = {Zharnitsky, V.},
	author = {Grenier, E.},
	author = {Jones, C. {K.R.T.}},
	author = {Turitsyn, S. K.},
	year = {2001},
	pages = {794--817},
}

 \end{biblist}
 \end{bibdiv}
 
\end{document}